\definecolor{darkblue}{rgb}{0,0,0.4}
\newtheorem{thm}{Theorem}[section]
\newtheorem{prop}[thm]{Proposition}
\newtheorem{lem}[thm]{Lemma}
\theoremstyle{remark}
\newtheorem{rem}[thm]{Remark}
\newcommand{\x}{{\mathbf x}}
\begin{document}

\title{A generalized MBO diffusion generated motion \\ for orthogonal matrix-valued fields}
\author{Braxton Osting}
\thanks{B. Osting is partially supported by NSF DMS 16-19755.}
\author{Dong Wang}
\address{Department of Mathematics, University of Utah, Salt Lake City, UT}
\email{\{osting,dwang\}@math.utah.edu}

\subjclass[2010]{35K93, 
35K05, 
65M12} 

\keywords{Allen-Cahn equation, Ginzburg-Landau equation, Merriman-Bence-Osher (MBO) diffusion generated motion, Lyapunov functional, orthogonal matrix-valued field}

\date{\today}

\begin{abstract} 
We consider the problem of finding stationary points of the Dirichlet energy for orthogonal matrix-valued fields. Following the Ginzburg-Landau approach, this energy is relaxed by penalizing the matrix-valued field when it does not take orthogonal matrix values. A generalization of the MBO diffusion generated motion is introduced that effectively finds local minimizers of this energy by iterating two steps until convergence. In the first step, as in the original method, the current matrix-valued field is evolved by the diffusion equation. In the second step, the field is pointwise reassigned to the closest orthogonal matrix, which can be computed via the singular value decomposition. We extend the Lyapunov function of Esedoglu and Otto to show that the method is non-increasing on iterates and hence, unconditionally stable. We also prove that spatially discretized iterates converge to a stationary solution in a finite number of iterations. The algorithm is implemented using the closest point method and non-uniform fast Fourier transform. We conclude with several numerical experiments on flat tori and closed surfaces, which, unsurprisingly, exhibit classical behavior from the Allen-Cahn and complex Ginzburg Landau equations, but also new phenomena.
\end{abstract}

\maketitle

\section{Introduction} \label{s:intro}
In a variety of settings, it is of interest to find a matrix-valued field which is smooth, perhaps away from a singularity set, and which best describes some observations or satisfies given boundary conditions. 
We have in mind 
(i) the study of polycrystals, where the matrix-valued field describes the local crystal orientation and should allow for crystal defects, both dislocations and grain boundaries \cite{Berkels_2007,Elsey_2013,Elsey_2014};
(ii) directional field synthesis problems arising in geometry processing and computer graphics, which can sometimes be formulated as finding matrix-valued fields of a certain class \cite{Viertel2017,Vaxman_2016}; and 
(iii) inverse problems in image analysis, {\it e.g.}, diffusion tensor MRI or fiber tractography, where it is of interest to estimate a matrix- or orientation-valued function \cite{Batard_2014,Ba_k_2016,Rosman_2014}. 
In this paper, we consider a model problem for these applications, and study the Dirichlet energy for matrix-valued functions on a given closed surface which take values in the class of orthogonal matrices. 

Let $\Omega$ to be a flat torus, or more generally, a closed manifold. Let $O_n \subset  M_n = \mathbb R^{n\times n} $ be the group of orthogonal matrices.
Let $H^1(\Omega,M_n)$ and $H^1(\Omega,O_n) \subset H^1(\Omega,M_n)$ denote the matrix-valued Sobolev spaces. 
We consider the problem
\begin{equation} \label{eq:GL}
 \min_{A \in H^1(\Omega,O_n)} \ E(A), 
\qquad \qquad \textrm{where} \ \  
E(A) := \frac{1}{2} \int_\Omega \| \nabla A \|_F^2 \ dx.  
\end{equation}
Here, $\| \cdot \|_F$ denotes the Frobenius norm, induced by the Frobenius inner product, $\langle A, B \rangle_F = \sum_{i,j} A_{i,j} B_{i,j}$. 
The gradient of $A$ is understood in the sense that $A$ takes values in the Euclidean embedding space, $M_n$, \cite{Rosman_2014} 
and \emph{not} the covariant derivative sense as pursued in \cite{Batard_2014,Ba_k_2016} and from a statistical standpoint in \cite{Fletcher_2007}. 
The Dirichlet energy, $E$, is non-negative and the minimum is attained by any constant function. In this paper, we're generally interested in approximating stationary points of the energy in \eqref{eq:GL} and the related gradient flow. 

Since the constraint $A \in H^1(\Omega,O_n)$ is non-trivial to enforce, as a first attempt,  we might try to add a term to the energy that pointwise penalizes a candidate field by its distance to $O_n$, 
\begin{equation} \label{eq:GL-relax1}
\min_{A \in H^1(\Omega,M_n)} \ E_{1,\varepsilon}(A), 
\qquad \qquad \textrm{where} \ \  
E_{1,\varepsilon}(A) :=  \int_\Omega \frac{1}{2} \| \nabla A \|_F^2  + \frac{1}{2\varepsilon^2} \textrm{dist}^2(O_n, A) \ dx . 
\end{equation}
Here, the metric is induced by the Frobenius norm and satisfies the following Lemma. 
\begin{lem} \label{l:proj}
Let $A \in \mathbb R^{n \times n}$ have a singular value decomposition, $A = U \Sigma V^t$. Then $B^\star =  A (A^t A )^{ - \frac 1 2}  = U V^t$ attains the minimum in 
\begin{equation} \label{e:Frob}
\textrm{dist}^2(O_n, A) = \min_{B \in O_n} \ \| B - A \|_F^2 = \sum_{i=1}^n (\sigma_i(A) - 1 )^2.
\end{equation}
Furthermore, if $A$ is non-singular, the sign of the determinant of $A$ and $B^\star$ are the same. The minimum, $B^\star$ is unique if the singular values of $A$ are distinct. 
\end{lem}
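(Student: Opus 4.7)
The plan is to reduce the minimization to a standard orthogonal Procrustes problem and then read off the optimizer directly from the SVD.

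First, I would expand the squared Frobenius distance. Since $B \in O_n$ implies $\|B\|_F^2 = \mathrm{tr}(B^t B) = n$, we have
\[
\|B - A\|_F^2 \;=\; n + \|A\|_F^2 - 2\,\mathrm{tr}(B^t A),
\]
so minimizing the left-hand side over $O_n$ is equivalent to maximizing $\mathrm{tr}(B^t A)$ over $O_n$.

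Second, I would insert the SVD $A = U \Sigma V^t$ and use the cyclic property of the trace:
\[
\mathrm{tr}(B^t A) \;=\; \mathrm{tr}(B^t U \Sigma V^t) \;=\; \mathrm{tr}\bigl((U^t B V)^t \Sigma\bigr) \;=\; \sum_{i=1}^n W_{ii}\,\sigma_i(A),
\]
where $W := U^t B V$ is again orthogonal. Because each column of $W$ has unit Euclidean norm, $|W_{ii}| \le 1$, and hence $\mathrm{tr}(B^t A) \le \sum_i \sigma_i(A)$ with equality whenever $W_{ii} = 1$ for every index $i$ with $\sigma_i(A) > 0$. Choosing $W = I$, i.e.\ $B^\star = UV^t$, attains the bound, and substituting back yields
\[
\|B^\star - A\|_F^2 \;=\; n + \sum_{i=1}^n \sigma_i(A)^2 - 2\sum_{i=1}^n \sigma_i(A) \;=\; \sum_{i=1}^n (\sigma_i(A) - 1)^2.
\]
The equivalent formula $B^\star = A(A^t A)^{-1/2}$ I would verify by direct computation from the SVD: $A^t A = V \Sigma^2 V^t$, so $(A^t A)^{-1/2} = V \Sigma^{-1} V^t$ when $A$ is nonsingular, and then $A (A^t A)^{-1/2} = U \Sigma V^t V \Sigma^{-1} V^t = UV^t$.

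For the determinantal claim I would observe $\det(B^\star) = \det(U)\det(V)$ while $\det(A) = \det(U)\det(V)\prod_i \sigma_i(A)$; since $\prod_i \sigma_i(A) > 0$ for nonsingular $A$, the signs agree. For uniqueness, assuming the singular values are distinct (and positive), equality in $|W_{ii}| \le 1$ forces $W_{ii} = 1$ for every $i$, and combined with the unit-norm property of rows and columns of the orthogonal $W$, this pins down $W = I$ and thus $B^\star = UV^t$.

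The only step requiring any real care is the equality analysis in $|W_{ii}| \le 1$: one must track which diagonal entries are forced to be $1$ based on whether the corresponding singular value is positive, and then invoke orthogonality of $W$ to conclude uniqueness. Beyond that I do not anticipate a substantial obstacle, since everything reduces to elementary linear algebra once the Procrustes reduction is in place.
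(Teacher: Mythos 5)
Your proof is correct, but it follows a different route from the paper. You reduce the problem to the orthogonal Procrustes form: expand $\|B-A\|_F^2$, maximize $\mathrm{tr}(B^tA)=\sum_i W_{ii}\sigma_i(A)$ with $W=U^tBV$ orthogonal, and use $|W_{ii}|\le 1$ to identify $B^\star=UV^t$ as a global maximizer. The paper instead forms the Lagrangian $L(B;\Lambda)=\|B-A\|_F^2+\langle \Lambda, B^tB-I_n\rangle_F$, sets the derivative to zero to get $B(\Lambda+I_n)=A$, solves for $\Lambda$ from the constraint, and reads off $B^\star=A(A^tA)^{-1/2}=UV^t$. The Lagrangian route yields the closed form $A(A^tA)^{-1/2}$ directly without invoking the SVD at the outset, but it only identifies a stationary point (and implicitly needs $A$ nonsingular to invert $\Lambda+I_n$), whereas your trace inequality establishes global optimality outright, works for singular $A$ as well, and—unlike the paper's proof—explicitly handles the determinant-sign and uniqueness assertions via the equality analysis in $|W_{ii}|\le 1$. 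One small point worth keeping: your parenthetical ``(and positive)'' in the uniqueness step is genuinely needed, since with $\sigma_n=0$ the last diagonal entry of $W$ is unconstrained by the equality condition and the minimizer is not unique even if the singular values are distinct; so your argument in fact sharpens the hypothesis under which uniqueness holds (nonsingular $A$ suffices, distinctness alone does not quite).
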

Lemma \ref{l:proj} is well known, but we include its proof  in Section \ref{s:s1proofs} for completeness. 
It follows from Lemma \ref{l:proj}  that the penalty term in $E_{1,\varepsilon}$ can be written 
\[  \frac{1}{ 2 \varepsilon^2} \sum_{i=1}^n \left(\sigma_i(A) -1 \right)^2. \]
In Theorem \ref{t:GradFlow}, we show that gradient of the energy in \eqref{eq:GL-relax1} exists at each nonsingular matrix and that the gradient flow is given by 
\begin{equation} \label{eq:GD-1}
\partial_t A = -\nabla_A E_{1,\varepsilon}(A) = \Delta A - \varepsilon^{-2}  U( \Sigma - I_n ) V^t, \qquad \textrm{where} \ \ A = U\Sigma V^t \textrm{ is nonsingular}.
\end{equation}

For a gradient flow that is defined for all matrices, we slightly modify the penalty term. Following the Ginzburg-Landau theory \cite{Bethuel_1994}, we introduce the following relaxation 
\begin{equation} \label{eq:GL-relax2}
\min_{A \in H^1(\Omega,M_n)} \ E_{2,\varepsilon} (A), 
\qquad \qquad \textrm{where} \ \  
E_{2,\varepsilon}(A) :=  \int_\Omega \frac{1}{2} \| \nabla A \|_F^2  + \frac{1}{4\varepsilon^2} \| A^t A - I_n \|_F^2 \ dx . 
\end{equation}
It is not difficult to see that the   penalty term in $E_{2,\varepsilon}$ is equivalent to the sum of a ``double well'' potential, $W\colon \mathbb R \to \mathbb R$, evaluated at the singular values of $A$,  {\it i.e.},
\[   \frac{1}{4\varepsilon^2}  \| A^t A - I_n \|_F^2 =  \frac{1}{\varepsilon^2}  \sum_{i=1}^n W\left(\sigma_i(A) \right), 
\qquad \qquad \textrm{where} \ \ 
W(x) = \frac{1}{4} \left(x^2 - 1 \right)^2. \]
In Theorem  \ref{t:GradFlow}, we show that the gradient flow for the energy in  
\eqref{eq:GL-relax2} is given by 
\begin{equation} \label{eq:GD-2}
\partial_t A = -\nabla_A E_{2,\varepsilon}(A) = \Delta A - \varepsilon^{-2}  U( \Sigma^2 - I_n) \Sigma  V^t, \qquad \textrm{where} \ \ A = U\Sigma V^t.
\end{equation}
An elementary calculation allows us to rewrite the second term on the right hand side of \eqref{eq:GD-2} to obtain
\begin{equation} \label{eq:GD-3}
\partial_t A = -\nabla_A E_{2,\varepsilon}(A) = \Delta A - \varepsilon^{-2} A (A^tA-I_n).
\end{equation}

\begin{rem} \label{r:Simp} For $n=1$, $O_n = \{ \pm 1\}$ and \eqref{eq:GD-3} is the classical Allen-Cahn equation. 
\end{rem}

To consider what the behavior  in higher dimensions, we recall that $O(n) = SO(n) \cup SO^-(n)$ is the disconnected union of  the special orthogonal group and the set of matrices with determinant equal to $-1$, 
$$
SO(n) = \{ A \in O_n \colon \textrm{det}(A) = 1 \}  
\quad \textrm{and} \quad  
SO^-(n) =  \{ A \in O_n \colon \textrm{det}(A) = -1 \}.
$$ 
In the relaxed energy given in \eqref{eq:GL-relax2}, the first term can be interpreted as a smoothing term  while the second term penalizes when the matrix-valued field is not an orthogonal matrix. Consider a field that takes values in $SO(n)$ on one set and values in $SO^-(n)$ in the complement. Intuitively, the small parameter  $\varepsilon$ should govern the length scale for which the solution smoothly transitions between values in $SO(n)$  and $SO^-(n)$. 

Indeed, consider the solution to the Cauchy problem to \eqref{eq:GD-3} with  initial condition $A(0,x) = A_0(x)$. We assume $A_0(x) \equiv A_+ \in SO(n)$ for $x$ on a subset of $\Omega$ and $A_0(x) = A_- \in SO^-(n)$ on the complement. Let $\Gamma(t)$ denote the zero level set of $\textrm{det} \left(A(t,x) \right)$ in \eqref{eq:GD-3}, which will serve to mark the interface between  $SO(n)$  and $SO^-(n)$ and let $d(x,t)$ be the signed distance function to $\Gamma(t)$. 
Denote $z= \varepsilon^{-\beta}d(x,t)$ where $\varepsilon^{\beta}$ is the thickness of the interface, and take the asymptotic expansion of \eqref{eq:GD-3} as follows:
\[\partial_t A_{\varepsilon}+\varepsilon^{-1} d_t \partial_z A_{\varepsilon} = \varepsilon^{-2\beta} \partial_{zz}A_{\varepsilon} + \varepsilon^{-\beta} \Delta d \partial_z A_{\varepsilon}  - \varepsilon^{-2} A_{\varepsilon} \left( A_{\varepsilon}^t A_{\varepsilon} - I_n \right),  
\qquad  \text{where} \ \ \ A_{\varepsilon} =\sum_{k=0}^{\infty}\varepsilon^{k} A_k. \]
To get a nontrivial solution, we require $\beta = 1$ and obtain the leading order equation at $O(\varepsilon^{-2})$,
\begin{align} \label{eq:leadingorder}
\partial_{zz}A_0 = A_0(A_0^tA_0-I_n).
\end{align}
It follows that the width of the interface between the sets where the determinant takes values $+1$ and $-1$ is $O(\varepsilon)$.
At  order $O(\varepsilon^{-1})$, we obtain
\begin{align} \label{eq:secondorder}
d_t \partial_z A_0 = \Delta d \partial_z A_0  +\partial_{zz}A_1 - (A_1A_0^tA_0+A_0A_1^tA_0+A_0A_0^tA_1-A_1)
\end{align}
where the last term comes from the direct expansion of $A_{\varepsilon}( A_{\varepsilon}^t A_{\varepsilon} - I_n )$.
Taking the Frobenius inner product with $\partial_z A_0$ on both sides of  \eqref{eq:secondorder} yields:
\begin{align*} 
d_t \langle \partial_z A_0 , &  \partial_z A_0\rangle_F -\Delta d \langle \partial_z A_0, \partial_z A_0 \rangle_F    \nonumber  \\
= & \langle \partial_{zz} A_1, \partial_z A_0 \rangle_F  - \langle A_1 A_0^tA_0+A_0A_1^tA_0 +A_0A_0^tA_1 -A_1, \partial_z A_0 \rangle_F  \\
=&  \langle \partial_{zz} A_1, \partial_z A_0 \rangle_F  - \langle A_1,   \partial_z A_0 A_0^tA_0+A_0\partial_z A_0^tA_0 +A_0A_0^t\partial_zA_0 - \partial_zA_0 \rangle _F  \nonumber
\end{align*}
We integrate the above equation with respect to $z$ from $-\infty$ to $\infty$. Using integration by parts, we can rewrite the first term on the right hand side as 
\begin{align*}
\int_{-\infty}^{\infty}\langle \partial_{zz} A_1 ,\partial_z A_0 \rangle_F dz = \int_{-\infty}^{\infty} \langle A_1 ,\partial_{zzz} A_0 \rangle_F dz
\end{align*}
where the boundary terms vanish because of the assumption that $A_0$ is constant away from $\Gamma(t)$.
We obtain 
\begin{align} \label{eq:secondorder4}
(d_t-\Delta d )\int_{-\infty}^{\infty}\langle  \partial_z A_0, \partial_z A_0 \rangle_F dz &=   \int_{-\infty}^{\infty} \left\langle A_1, \partial_{z}\left(\partial_{zz} A_0- (A_0A_0^t A_0-A_0)\right) \right\rangle_F dz . 
\end{align}
Using \eqref{eq:leadingorder}, the right hand side of \eqref{eq:secondorder4} vanishes and we have
\[ d_t = \Delta d, \]
unless $\|\partial_z A_0\|_F = 0 $, which is impossible since we assume $A_0 \in SO(n)$ on  one side of $\Gamma(t)$ and $A_0 \in SO^-(n)$ on the other.  It follows that the interface evolves according to mean curvature flow if $A_0$ is constant away from the interface. We will demonstrate through numerical experiments that when this assumption does not hold, the interface $\Gamma(t)$ evolves according to a different governing equation (see the example in Figure~\ref{fig:flat_torus_dynamic_winding_1}).

For $n=2$, if the initial condition takes values in $SO(2) \cong \mathbb S^1$, then \eqref{eq:GD-3} is equivalent to the complex Ginzburg-Landau energy as the following Lemma shows. 

\begin{lem} \label{l:n=2Equiv}
Let $n=2$ and consider the Cauchy initial value problem for \eqref{eq:GD-3} with  $A(0,x) = A_0(x)\in SO(2)$.  
Write the solution as $A(t,x)=[ \vec{u}_1(t,x),\vec{u}_2(t,x)]$ where $\vec{u}_1(t,x)$ and $\vec{u}_2(t,x)$ are two column vectors. Then $\vec{u}_1(t,x)$ satisfies the complex Ginzburg-Landau equation  
\[  \partial_t \vec{u}_1(t,x) = \Delta \vec{u}_1(t,x) - \varepsilon^{-2} \vec{u}_1( | \vec{u}_1 |^2-1), 
\qquad \qquad  x\in \Omega, \ t \geq 0. \]
\end{lem}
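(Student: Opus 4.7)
My plan is to make an ansatz that forces $A(t,x)$ to stay in (a scaled copy of) $SO(2)$, decouples the two columns, and invoke uniqueness for \eqref{eq:GD-3} to conclude. Concretely, any $R \in SO(2)$ has the form $R = [\vec{w},\, J\vec{w}]$ where $J = \bigl(\begin{smallmatrix} 0 & -1 \\ 1 & \phantom{-}0 \end{smallmatrix}\bigr)$ is the $90^\circ$ rotation; so the hypothesis $A_0(x)\in SO(2)$ means $A_0(x) = [\vec{u}_1^{\,0}(x),\, J\vec{u}_1^{\,0}(x)]$ with $|\vec{u}_1^{\,0}(x)|=1$. I will construct a solution of this same form and use uniqueness.

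Step 1 (construct an ansatz). Let $\vec{v}(t,x)$ solve the Cauchy problem for the complex Ginzburg--Landau equation
\[ \partial_t \vec{v} = \Delta \vec{v} - \varepsilon^{-2} \vec{v}\bigl(|\vec{v}|^2 - 1\bigr), \qquad \vec{v}(0,x) = \vec{u}_1^{\,0}(x), \]
which exists and is unique by standard semilinear parabolic theory (the nonlinearity is polynomial and the energy $E_{2,\varepsilon}$ supplies an a priori bound). Define the candidate $\tilde A(t,x) := [\vec{v}(t,x),\, J\vec{v}(t,x)]$.

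Step 2 (verify $\tilde A$ solves \eqref{eq:GD-3}). Using $J\vec{v}\cdot\vec{v} = 0$ and $|J\vec{v}|=|\vec{v}|$, one computes $\tilde A^t\tilde A = |\vec{v}|^2\, I_2$, so $\tilde A(\tilde A^t\tilde A - I_2) = (|\vec{v}|^2-1)\tilde A$. Since $J$ commutes with $\partial_t$ and with the componentwise Laplacian, the columnwise form of \eqref{eq:GD-3} reads
\[ \partial_t \vec{v} = \Delta \vec{v} - \varepsilon^{-2}(|\vec{v}|^2-1)\vec{v}, \qquad \partial_t(J\vec{v}) = \Delta(J\vec{v}) - \varepsilon^{-2}(|\vec{v}|^2-1)J\vec{v}, \]
and the second is just $J$ applied to the first. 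Both are therefore satisfied by construction, and $\tilde A(0,x) = A_0(x)$.

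Step 3 (uniqueness). By uniqueness of the Cauchy problem for \eqref{eq:GD-3}, $A = \tilde A$, so the first column $\vec{u}_1$ coincides with $\vec{v}$ and satisfies the stated equation. Under the identification of $\vec{u}_1 = (u_{11},u_{12})^t$ with the complex scalar $u_{11} + i u_{12}$, this is the usual scalar form of the complex Ginzburg--Landau equation. The only non-routine ingredient is the uniqueness invoked in Step 3; everything else is a short algebraic check, with the crucial structural input being that $\tilde A^t\tilde A$ is a scalar multiple of the identity, which kills the off-diagonal coupling in the nonlinear term.
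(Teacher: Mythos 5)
Your proof is correct, but it takes a genuinely different route from the paper. The paper works directly with the unknown solution: it writes out \eqref{eq:GD-3} columnwise as the coupled system \eqref{eq:GD-4}, derives the evolution equation \eqref{eq:SO(2)} for the coupling quantity $\vec{u}_1\cdot\vec{u}_2$, and uses the $SO(2)$ structure of the initial data (namely $u_{11}=u_{22}$, $u_{12}=-u_{21}$ at $t=0$) to show $\partial_t(\vec{u}_1\cdot\vec{u}_2)=0$ at $t=0$, from which it asserts that the columns remain orthogonal for all time so that the cross term in \eqref{eq:GD-4-1} drops. You instead build the solution by the ansatz $\tilde A=[\vec{v},J\vec{v}]$ with $\vec{v}$ the Ginzburg--Landau solution, check that $\tilde A^t\tilde A=|\vec{v}|^2 I_2$ decouples the nonlinearity, and identify $A=\tilde A$ by uniqueness for \eqref{eq:GD-3}. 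What your approach buys is that the propagation in time of the algebraic structure is handled cleanly: the paper's argument as written only verifies the vanishing of $\partial_t(\vec{u}_1\cdot\vec{u}_2)$ at the initial time, and the step from that to $\vec{u}_1\cdot\vec{u}_2\equiv 0$ for all $t\geq 0$ is not spelled out, whereas your uniqueness argument closes exactly that gap and moreover shows the stronger fact that $A(t,x)$ remains a scalar multiple of a rotation (i.e.\ $\vec{u}_2=J\vec{u}_1$ and $|\vec{u}_1|=|\vec{u}_2|$ persist), not merely that the columns stay orthogonal. The price is that you must invoke well-posedness: global existence and uniqueness for the Ginzburg--Landau Cauchy problem and uniqueness for \eqref{eq:GD-3}; these follow from standard semilinear parabolic theory on a closed manifold (locally Lipschitz cubic nonlinearity plus an a priori bound, e.g.\ via the maximum-principle-type argument of Proposition \ref{p:MaxPrinc} or the energy), and the paper implicitly assumes the same well-posedness, so this is an acceptable dependence rather than a defect.
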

A proof of Lemma \ref{l:n=2Equiv} is given in Section \ref{s:s1proofs}.

\begin{algorithm}[t!]
\DontPrintSemicolon
 \KwIn{Let $\Omega$ be a closed surface, $\tau > 0$, and $A_0 \in H^1(\Omega,O_n)$.}
 \KwOut{ A matrix-valued function $A_n \in H^1(\Omega,O_n)$ that approximately minimizes   \eqref{eq:GL}.}
 Set $s=1$\;
 \While{not converged}{
{\bf 1.  Diffusion Step.} Solve the initial value problem for the diffusion equation  until time $\tau$ with initial value given by $A_{s-1}(x)$:
\begin{align*}
&\partial_t A(t,x) = \Delta A (t,x) \\
&A(0,x) = A_{s-1}(x).
\end{align*}
Let $\tilde A(x) = A(\tau,x)$\;
{\bf 2. Projection Step.} If $\tilde A(x) = U(x) \Sigma(x) V^t(x)$, set $A_s(x) = U(x) V^t(x)$\;
Set $s = s+1$\;
 }
\caption{A diffusion generated motion algorithm for approximating minimizers of the energy in  \eqref{eq:GL}. } 
\label{a:MBO}
\end{algorithm}

\subsection{Results.} As in \eqref{eq:GL}, the minimum in \eqref{eq:GL-relax1} and \eqref{eq:GL-relax2}  are  attained by any constant orthogonal matrix-valued function, so we are interested in  computing non-trivial stationary solutions. In this paper, we introduce an energy splitting method for finding local minima of \eqref{eq:GL-relax1} and \eqref{eq:GL-relax2}, which can also be interpreted as a split-step method for the evolutions in  \eqref{eq:GD-1} and \eqref{eq:GD-2}. The method, which we refer to as a generalized MBO method,  is given in Algorithm \ref{a:MBO}. The first step of  Algorithm \ref{a:MBO} is the gradient flow until a time $\tau$  of the first term in either \eqref{eq:GL-relax1} or \eqref{eq:GL-relax2}, which is equivalent to heat diffusion of the matrix-valued field. The second step is   gradient flow until infinite time of the second term in either \eqref{eq:GL-relax1} or \eqref{eq:GL-relax2}.  The second step can also be viewed as the point-wise projection onto $O_n$, as Lemma \ref{l:proj} shows.

The generalized MBO method (Algorithm \ref{a:MBO}) reduces to the original one for $n=1$ \cite{merriman1992diffusion,MBO1993,merriman1994motion} and to the generalized method in \cite{Ruuth_2001} if the initial condition is restricted to  $SO(2) \subset O(2)$; see Lemma \ref{l:n=2Equiv}. We'll review these connections and the literature on computational methods for the MBO method in Section \ref{s:PW}.

In Section \ref{s:s1proofs}, we prove Lemma \ref{l:proj} and verify that  \eqref{eq:GD-1} and \eqref{eq:GD-2} are the gradient flows of  \eqref{eq:GL-relax1} and \eqref{eq:GL-relax2}, respectively.  We also prove a maximum principle for the matrix heat equation (see Proposition \ref{p:MaxPrinc}). 
In Section \ref{s:stab}, we extend the Lyapunov function of Esedoglu and Otto \cite{esedoglu2015threshold} to the generalized MBO method (Algorithm \ref{a:MBO}) considered here.  This is equivalent to proving the stability of  Algorithm \ref{a:MBO}. In Section \ref{s:stab}, we also prove convergence of a discretized problem, which we believe is a new result, even for $n=1$.  
In Section \ref{s:Imp}, we discuss an implementation of the method on closed surfaces based on the non-uniform FFT. 
In Section \ref{s:CompExp}, we present a variety of computational experiments. Since, as described above, \eqref{eq:GD-3} can be seen to reduce to the Allen-Cahn and complex Ginzburg-Landau equations in special cases, many numerical experiments exhibit behavior of either or a combination of these two equations. For some initial conditions, new phenomena and stationary states are also observed. In particular, a non-trivial field index can cause an interface that would be stationary under the Allen-Cahn equation to be non-stationary. We also show experiments where the volume of the set $\{x \in \Omega \colon \det A(x) > 0 \}$ is constrained. 
We conclude in Section \ref{s:disc} with a discussion and future directions.

\section{Previous work} \label{s:PW}
In 1992, Merriman, Bence, and Osher (MBO) \cite{merriman1992diffusion,MBO1993,merriman1994motion}
developed a threshold dynamics method for the motion of the interface driven
by the mean curvature. 
To be more precise, let $D \subset \mathbb{R}^n$ be a domain where its boundary
$\Gamma= \partial D$ is to be evolved via motion by mean curvature.
The MBO method is an iterative method, and at each time step, generates a new interface, $\Gamma_{\text{new}}$ (or equivalently, $D_{\text{new}}$) via the following two steps: \\


\noindent {\bf Step 1.} Solve the Cauchy initial value problem for the heat diffusion equation until time $t = \tau$, 
\begin{align*}
& u_t = \Delta u , \\
& u(t=0,\cdot) = \chi_{D},
\end{align*}
where $\chi_D$ is the indicator function of domain $D$. Let $\tilde u(x) = u(\tau,x)$. \\

\noindent  {\bf Step 2.} Obtain a new domain $D_{\text{new}}$ with boundary $\Gamma_{\text{new}} = \partial D_{\text{new}}$  by
\begin{align*}
D_{\text{new}} = \left\{ x\colon \tilde u (x) \geq \frac{1}{2} \right\}.
\end{align*} 

The MBO method has been shown to converge to the continuous motion by mean curvature 
\cite{barles1995simple,chambolle2006convergence,evans1993convergence,swartz2017convergence}. 
Esedoglu and Otto  generalize this type of method to multiphase flow with general mobility \cite{esedoglu2015threshold}. The method has attracted much attention due to its simplicity and unconditional stability. It has  subsequently been extended to deal with many other applications. These applications include the multi-phase problems with arbitrary surface tensions \cite{esedoglu2015threshold}, 
the problem of area or volume preserving interface motion \cite{ruuth2003simple}, 
image processing \cite{wang2016efficient,esedog2006threshold,merkurjev2013mbo}, 
problems of  anisotropic interface motions \cite{merriman2000convolution,ruuth2001convolution,bonnetier2010consistency,elsey2016threshold}, 
the wetting problem on solid surfaces \cite{xu2016efficient}, 
generating quadrilateral meshes \cite{Viertel2017}, 
graph partitioning and data clustering  \cite{Gennip2013}, 
and auction dynamics \cite{jacobsauction}. 
Various algorithms and rigorous error analysis have been introduced to refine and extend the original MBO method and related methods for the aforementioned problems; see, {\it e.g.}, \cite{deckelnick2005computation,esedoglu2008threshold,merriman1994motion,
ilmanen1998lectures,ishii2005optimal,mascarenhas1992diffusion,
ruuth1998diffusion,ruuth1998efficient}.
Adaptive methods have also been used to accelerate this type of method \cite{jiang2016nufft} based on non-uniform fast Fourier transform (NUFFT) \cite{nufft2,nufft6}. Laux et al. \cite{laux2016convergence,laux2016convergence2} rigorously proved the convergence of the method proposed in \cite{esedoglu2015threshold}.  

In Section \ref{ss:closedsurface}, we'll briefly review numerical methods for surface diffusion. 

\section{Properties of gradient flows for the relaxed energies} \label{s:s1proofs}
We first  prove  Lemma \ref{l:proj} and then discuss the gradient flows of $E_{1,\varepsilon}$ and $E_{2,\varepsilon}$ in  \eqref{eq:GL-relax1} and \eqref{eq:GL-relax2}. In Section \ref{s:MaxPrinc}, we also give a maximum norm principle for the matrix diffusion equation.

\begin{proof}[Proof of Lemma \ref{l:proj}]
Introducing the dual variable $\Lambda \in \mathbb S^n$, we form the Lagrangian 
\[ L(B; \Lambda ) = \| B - A \|_F^2 + \langle \Lambda, B^tB - I_n \rangle_F. \]
Setting the derivative with respect to $B$ to zero yields 
\[ -2A + 2B + 2 B \Lambda = 0 \quad \implies \quad B(\Lambda + I_n) =  A . \]
We choose $\Lambda$ so that the constraint $B^t B = I_n$ is satisfied. We compute 
\[ A^t A = (\Lambda + I_n) B^t B  (\Lambda + I_n) =  (\Lambda + I_n)^2 
\quad \implies \quad 
\Lambda + I_n  = (A^t A )^{\frac 1 2}, \] 
to obtain $B^\star = A (\Lambda + I_n)^{-1}  = A (A^t A )^{ - \frac 1 2}$. If   $A = U \Sigma V^t$, then we have that 
\[ B^\star =  A (A^t A )^{ - \frac 1 2} = U \Sigma V^t ( V \Sigma^{2} V^t )^{-\frac 1 2} = U V^t . \]
Finally, we compute 
$  \| B^\star - A \|_F^2 = \| U (\Sigma - I_n) V \|_F^2 = \textrm{tr}( \Sigma - I_n)^2 = \sum_{i=1}^n (\sigma_i(A) - 1)^2 $.
\end{proof}

\begin{thm} \label{t:GradFlow} The gradient  flow for  $E_{1,\varepsilon}$ in  \eqref{eq:GL-relax1} is given in \eqref{eq:GD-1} and defined provided $A$ is non-singular almost everywhere. The gradient flow for  $E_{2,\varepsilon}$ in  \eqref{eq:GL-relax2} is given in \eqref{eq:GD-2}. 
\end{thm}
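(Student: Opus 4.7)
My plan is to compute the first variation of each energy in the standard way: pair $\frac{d}{dt}\big|_{t=0} E_{i,\varepsilon}(A+tH) = \langle \nabla_A E_{i,\varepsilon}(A), H\rangle_{L^2}$ against an arbitrary smooth matrix-valued test field $H$, and read off the $L^2$ gradient. The Dirichlet term $\frac{1}{2}\int_\Omega \|\nabla A\|_F^2\, dx$ integrates by parts on the closed manifold $\Omega$ (no boundary terms) to produce the contribution $-\Delta A$, so in both cases the $\Delta A$ term of the gradient flow is immediate. The two claims then reduce to identifying the $L^2$ gradient of the respective pointwise penalty.

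For $E_{2,\varepsilon}$, I would write $\|A^tA - I_n\|_F^2 = \textrm{tr}\bigl((A^tA - I_n)^2\bigr)$ and differentiate: the variation equals $2\,\textrm{tr}\bigl((A^tA - I_n)(H^tA + A^tH)\bigr)$. Using symmetry of $A^tA - I_n$ together with the cyclic invariance of trace, both summands rearrange to $\langle H, A(A^tA - I_n)\rangle_F$, identifying the $L^2$ gradient of the penalty as $\varepsilon^{-2} A(A^tA - I_n)$ and establishing \eqref{eq:GD-3}. Plugging in the SVD $A = U\Sigma V^t$ and using that diagonal matrices commute gives $A(A^tA - I_n) = U(\Sigma^2 - I_n)\Sigma V^t$, which is the form asserted in \eqref{eq:GD-2}.

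For $E_{1,\varepsilon}$, I would argue geometrically. At every nonsingular $A$, Lemma \ref{l:proj} guarantees that the projection $B^\star(A) = A(A^tA)^{-1/2}$ is a smooth function of $A$, so locally $\frac{1}{2}\textrm{dist}^2(O_n,A) = \frac{1}{2}\|A - B^\star(A)\|_F^2$. Differentiating,
\[ \frac{d}{dt}\bigg|_{t=0}\frac{1}{2}\bigl\|A+tH - B^\star(A+tH)\bigr\|_F^2 = \langle A - B^\star(A),\, H\rangle_F - \langle A - B^\star(A),\, \dot B^\star\rangle_F, \]
where $\dot B^\star$ is the derivative of $B^\star(A+tH)$ at $t=0$. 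Since $B^\star(A+tH)\in O_n$ for all small $t$, the velocity $\dot B^\star$ is tangent to $O_n$ at $B^\star(A)$; because $B^\star(A)$ is the Frobenius-orthogonal projection of $A$ onto $O_n$, the residual $A - B^\star(A)$ is $F$-normal to $O_n$ at $B^\star(A)$, so the second inner product vanishes. Thus the $L^2$ gradient of the penalty is $\varepsilon^{-2}(A - B^\star(A)) = \varepsilon^{-2}U(\Sigma - I_n)V^t$, giving \eqref{eq:GD-1}.

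The main technical subtlety is regularity of $E_{1,\varepsilon}$: Lemma \ref{l:proj} only guarantees uniqueness of $B^\star$ when the singular values of $A$ are distinct and $A$ is nonsingular, so the smoothness of $B^\star(A)$ in $A$ (and therefore the above argument) breaks down on the singular and repeated-spectrum loci. This is precisely why the statement restricts to $A$ nonsingular almost everywhere. An alternative, more computational, verification would be to differentiate $B^\star(A) = A(A^tA)^{-1/2}$ explicitly via the SVD, but the normality argument avoids perturbation theory for the matrix square root and makes transparent why the same formula $U(\Sigma - I_n)V^t$ arises.
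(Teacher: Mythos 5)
Your argument is correct, but it takes a genuinely different route from the paper. The paper treats both penalties at once as absolutely symmetric functions of the singular values, $W_i\circ\sigma(A)$, and invokes Lewis's result (their cited Corollary 7.4) to get Fr\'echet differentiability with $\nabla_A(W\circ\sigma)(A)=U\,\mathrm{diag}\bigl(\nabla W(\sigma(A))\bigr)V^t$; the nonsingularity hypothesis for $E_{1,\varepsilon}$ enters because $W_1$ fails to be differentiable at the origin. You instead avoid spectral-function machinery entirely: for $E_{2,\varepsilon}$ a direct trace computation gives the gradient $\varepsilon^{-2}A(A^tA-I_n)$, i.e.\ \eqref{eq:GD-3}, and the SVD substitution then yields \eqref{eq:GD-2} (pleasantly, this derives the form the paper actually uses later); for $E_{1,\varepsilon}$ you differentiate $\tfrac12\|A-B^\star(A)\|_F^2$ and kill the $\dot B^\star$ term by the standard tangent/normal argument at the projection point, which is a clean geometric alternative. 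Two small points to tighten: Lemma \ref{l:proj} gives the formula $B^\star(A)=A(A^tA)^{-1/2}$ but not its smoothness --- you should justify that separately, e.g.\ by noting that $A^tA$ is positive definite for nonsingular $A$ and $X\mapsto X^{-1/2}$ is smooth on positive definite matrices; and with that observation your closing caveat about the ``repeated-spectrum locus'' is unnecessary, since the polar factor $UV^t=A(A^tA)^{-1/2}$ is unique and smooth at every nonsingular $A$ even when singular values coincide (only the individual SVD factors lose uniqueness), so your argument covers exactly the set claimed in the theorem.
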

\begin{proof}
We consider the function $W_1 \colon \mathbb R^n \to \mathbb R$ defined by $W_1(x) = \frac{1}{2} \sum_{i=1}^n (|x| - 1)^2$, which is absolutely symmetric in the sense that it is invariant under permutations and sign changes of its arguments. 
For $x\succ 0$, $W_1(x)$ is differentiable and $\nabla_x  W_1(x) = x - 1$.  By  \cite[Corollary 7.4 ]{Lewis_2005}, the function $W_1 \circ \sigma (A)$ is Fr\'echet differentiable for all non-singular $A$ and the derivative is given by 
$$
\nabla_A W_1\circ \sigma (A) = U ( \Sigma - I_n) V^t, 
\qquad \qquad \textrm{where} \ \ 
A = U \Sigma V^t  \ \ \textrm{is nonsingular}. 
$$
The gradient of the first term of $E_{1,\varepsilon}$ in  \eqref{eq:GL-relax1}  is computed as usual, so this justifies  \eqref{eq:GD-1}. 

We consider the function $W_2 \colon \mathbb R^n \to \mathbb R$ defined by $W_2(x) = \frac{1}{4} \sum_{i=1}^n (x^2 - 1)^2$, which is absolutely symmetric. 
For $x\in \mathbb R^n$, $W_2(x)$ is differentiable and $\nabla_x  W_2(x) = x^3 - x $.  By  \cite[Corollary 7.4 ]{Lewis_2005}, the function $W_2 \circ \sigma (A)$ is Fr\'echet differentiable for all $A \in \mathbb R^{n \times n}$ and the derivative is given by 
$$
\nabla_A W_2\circ \sigma (A) = U ( \Sigma^3 - \Sigma) V^t, 
\qquad \qquad \textrm{where} \ \ 
A = U \Sigma V^t.  
$$
This justifies \eqref{eq:GD-2}. 
\end{proof}

\begin{proof}[Proof of Lemma \ref{l:n=2Equiv}.]
Write the solution to \eqref{eq:GD-3} as $A(t,x)=[\vec{u}_1(t,x),\vec{u}_2(t,x)]$ where $\vec{u}_1(t,x)$ and $\vec{u}_2(t,x)$ are two column vectors. Equation \eqref{eq:GD-3} can be written as the coupled system for the column vectors, 
\begin{subequations} \label{eq:GD-4}
\begin{align}
& \partial_t \vec{u}_1(t,x) = \Delta \vec{u}_1(t,x)-\varepsilon^{-2}(\vec{u}_1(\|\vec{u}_1\|^2-1)+\vec{u}_2(\vec{u}_1\cdot\vec{u}_2)) && x\in \Omega, \ t \geq 0,    \label{eq:GD-4-1}  \\
& \partial_t \vec{u}_2(t,x) = \Delta \vec{u}_2(t,x)-\varepsilon^{-2}(\vec{u}_2(\|\vec{u}_2\|^2-1)+\vec{u}_1(\vec{u}_1\cdot\vec{u}_2)) && x\in \Omega, \ t \geq 0. \label{eq:GD-4-2}
\end{align}
\end{subequations}
Taking the inner product of $\vec{u}_2$ with \eqref{eq:GD-4-1} and $\vec{u}_1$ with \eqref{eq:GD-4-2} and adding the results gives
\begin{align}\label{eq:SO(2)}
\partial_t( \vec{u}_1\cdot \vec{u}_2) = \Delta \vec{u}_1\cdot \vec{u}_2+\Delta \vec{u}_2\cdot \vec{u}_1-\varepsilon^{-2}((\vec{u}_1\cdot\vec{u}_2)(\|\vec{u}_1\|^2+\|\vec{u}_2\|^2-2)+2(\vec{u}_1\cdot\vec{u}_2)^2).
\end{align}

Note that $A(0,x)\in SO(2)$ implies $\vec{u}_1(0,x)\cdot\vec{u}_2(0,x)=0$, $\|u_1(0,x)\|^2=1$, $\|u_2(0,x)\|^2=1$, $u_{11}(0,x) = u_{22}(0,x)$, and $u_{12}(0,x)=-u_{21}(0,x)$ if we denote $u_1(t,x) = (u_{11}(t,x),u_{12}(t,x))^t$ and $u_2(t,x) = (u_{21}(t,x),u_{22}(t,x))^t$. Then, we have 
\begin{align*}
\Delta \vec{u}_1(0,x) & \cdot \vec{u}_2(0,x)+\Delta \vec{u}_2(0,x)\cdot \vec{u}_1(0,x) \\
=&  \Delta u_{11}(0,x)u_{21}(0,x)+ \Delta u_{12}(0,x)u_{22}(0,x)+ \Delta u_{21}(0,x)u_{11}(0,x) +\Delta u_{22}(0,x)u_{12}(0,x) \\
=& -\Delta  u_{11}(0,x) u_{12}(0,x)+ \Delta u_{12}(0,x)u_{11}(0,x) -\Delta u_{12}(0,x)u_{11}(0,x) +\Delta u_{11}(0,x)u_{12}(0,x)  \\
=&0
\end{align*}
which implies $\partial_t( \vec{u}_1\cdot \vec{u}_2)  = 0 $ at $t=0$, $\forall x\in \Omega$. Combining with $\vec{u}_1(0,x)\cdot\vec{u}_2(0,x)=0$ indicates that $\vec{u}_1\cdot \vec{u}_2 = 0$, $\forall t\geq 0 $, $\forall x\in \Omega$. Then, \eqref{eq:GD-4-1} gives us the classical complex Ginzburg-Landau equation as in the statement of Lemma \ref{l:n=2Equiv}.
\end{proof}

\subsection{Maximum norm principle for the matrix-valued heat equation} \label{s:MaxPrinc}

The following is a maximum principle for the solution to the matrix-valued heat equation. 
The proof can also be derived from \cite[Proposition 10.3]{kresin2012maximum}, but we include a simpler proof. 

\begin{prop} \label{p:MaxPrinc}
Let $A \colon \Omega \times [0,\infty) \rightarrow \mathbb{R}^{n\times n}$ satisfy the Cauchy initial value problem, 
\begin{align*}
&\partial_t  A = \Delta A \\
&A(0,x) = A_0.
\end{align*}
Assume that  $\| A_0(x) \|_F = 1$ for every $x\in \Omega$. 
Then  $ \| A(t,x) \|_F \leq 1$ for all $t\geq 0$ and $x \in \Omega$.  This implies that $\| A(t,x) \|_2 \leq 1$ and $ A(t,\cdot) \in \mathcal{K}_n$ for $t \geq 0$. Furthermore, if $A_0(x) \in O(n)$, then 
\[ | \det A(t,x) |  \leq 1, \qquad \qquad \textrm{for all} \ x\in \Omega, \ t\geq 0. \] 

\end{prop}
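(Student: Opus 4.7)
My strategy is to reduce each of the three bounds to the scalar weak maximum principle applied on the closed manifold $\Omega$. The matrix heat equation $\partial_t A = \Delta A$ acts entry-wise, so every entry of $A$ solves the scalar heat equation, and any scalar quantity that is polynomial in the entries of $A$ and its spatial derivatives satisfies a parabolic identity one can analyze directly.

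\textbf{Frobenius bound.} I would set $\varphi(t,x) := \|A(t,x)\|_F^2 = \langle A, A\rangle_F$ and compute, using the bilinearity of the Frobenius inner product,
\begin{equation*}
\partial_t \varphi = 2\langle A, \Delta A\rangle_F, \qquad \Delta \varphi = 2\langle A, \Delta A\rangle_F + 2\|\nabla A\|_F^2,
\end{equation*}
so that $\partial_t \varphi - \Delta \varphi = -2\|\nabla A\|_F^2 \leq 0$. Since $\varphi(0,\cdot) \equiv 1$ and $\Omega$ is a closed manifold (no boundary), the scalar weak maximum principle applied to this subsolution yields $\varphi(t,x) \leq 1$, i.e., $\|A(t,x)\|_F \leq 1$.

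\textbf{Spectral bound and $\mathcal{K}_n$.} The elementary matrix inequality $\|M\|_2 \leq \|M\|_F$ immediately gives $\|A(t,x)\|_2 \leq 1$. The inclusion $A(t,\cdot) \in \mathcal{K}_n$ then follows from the classical identification of $\mathcal{K}_n$ (the closed unit ball in the operator norm) with $\overline{\mathrm{conv}}(O_n)$.

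\textbf{Determinant bound.} For the last claim I would apply the same subsolution argument, now column by column and under the separate hypothesis $A_0(x) \in O(n)$. Writing $A = [\vec{a}_1,\ldots,\vec{a}_n]$, each column independently solves $\partial_t \vec{a}_i = \Delta \vec{a}_i$. Since $A_0(x) \in O(n)$ forces $\|\vec{a}_i(0,x)\| = 1$ for each column, the exact same computation as in the Frobenius step shows $\|\vec{a}_i(t,x)\|^2$ is a heat subsolution with initial value $\equiv 1$, so the maximum principle yields $\|\vec{a}_i(t,x)\| \leq 1$ for every $i, t, x$. Hadamard's inequality then closes the argument:
\begin{equation*}
|\det A(t,x)| \leq \prod_{i=1}^n \|\vec{a}_i(t,x)\| \leq 1.
\end{equation*}

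\textbf{Main obstacle.} No step is genuinely hard; the only real content is choosing the correct scalar subsolutions, $\|A\|_F^2$ for the Frobenius/operator-norm part and the individual $\|\vec{a}_i\|^2$ for the determinant part. The mildest point of care is that the weak maximum principle is being invoked on a closed manifold, which is handled by the standard perturbation $\varphi + \epsilon t$ argument at an interior maximum followed by $\epsilon \downarrow 0$; this replaces the boundary-data step one would see on a bounded domain.
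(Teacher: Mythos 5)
Your argument is correct, and the first two parts coincide in substance with the paper's proof: the paper also reduces the Frobenius bound to the scalar inequality $\partial_t\varphi - \Delta\varphi = -2\|\nabla A\|_F^2 \le 0$ (it phrases this for a vector obtained by unfolding the entries of $A$) and then, rather than citing the weak maximum principle, proves it from scratch by a two-step perturbation argument, appending an extra component $\varepsilon^{1/2}(1+t)^{-1/4}$ to make the differential inequality strict and letting $\varepsilon \downarrow 0$; your appeal to the standard principle with the time-linear perturbation is the same mechanism (note only that the comparison function should be $\varphi - \epsilon t$, or equivalently a comparison of $\varphi$ against $1+\epsilon t$, not $\varphi + \epsilon t$, a cosmetic slip since you only sketch this step). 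Where you genuinely diverge is the determinant bound: the paper estimates $|\det A|^{1/n} = \bigl(\prod_i \sigma_i(A)\bigr)^{1/n} \le \frac1n\sum_i \sigma_i(A) \le \frac{1}{\sqrt n}\|A\|_F$ via AM--GM and the $\ell^1$--$\ell^2$ inequality, and then invokes the Frobenius bound; since orthogonal initial data have $\|A_0\|_F = \sqrt n$ rather than $1$, this implicitly uses the rescaled version of the first claim. Your route---each column solves the heat equation, has unit initial norm when $A_0(x)\in O(n)$, hence stays in the unit ball by the same subsolution argument, and Hadamard's inequality finishes---avoids that rescaling entirely and is arguably cleaner and more self-contained; the paper's singular-value route has the mild advantage of reusing the already-established Frobenius bound verbatim without introducing a columnwise decomposition.
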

\begin{proof}
First consider $u\colon \Omega \times [0,\infty)\rightarrow \mathbb{R}^n$ satisfying
\begin{align*}
&u_t = \Delta u \\
&u(0,x) = u_0,
\end{align*}
with $|u_0|_2= 1$. Taking the vector inner product with $u$ on both sides of the equation gives  
\begin{align*}
&\partial_t |u|_2^2  = \Delta |u|_2^2 - \|\nabla u\|_F^2, \\
&|u_0(x) |_2^2 = 1.
\end{align*}
Here $\nabla u$ is the Jacobian of $u$. 

We prove the following claim: \\
\noindent {\it Claim.} 
If $u\colon \Omega \times [0,\infty)\rightarrow \mathbb{R}^n$ satisfies 
\begin{align*}
&\partial_t |u|_2^2  \leq \Delta |u|_2^2 - \|\nabla u\|_F^2, \\
&|u_0(x) |_2^2 = 1.
\end{align*}
then $|u(t,x)|_2\leq 1$ for all $t\geq 0$ and $x \in \Omega$. That is the maximum value must be attained at $t=0$.

\medskip

We divide the proof into two steps as follows.\\

\medskip

{\it Step 1: Assume $\partial_t |u|_2^2  < \Delta |u|_2^2 - \|\nabla u\|_F^2$.} We prove the result by contradiction. 
Assume $|u(t,x)|_2^2$ attains a maximum value at $(x^*,t^*)$ with $t^*>0$. 
On one hand, we have $\partial_t |u(x^*,t^*)|_2^2 \geq 0$. (Otherwise, there exists $\delta > 0$ and $t' \in (t-\delta,t)$ such that $|u(x^*,t')|_2^2 > |u(x^*,t^*)|_2^2$.)
On the other hand, we have $\Delta |u (x^*,t^*)|_2^2 \leq 0$ and $-|\nabla u|_2^2 \leq 0$. Then, we have $\partial_t |u|_2^2 \geq 0 \geq  \Delta |u|_2^2 - \|\nabla u\|_F^2$ at $(t^*,x^*)$ which is contradictory to the assumption.\\

\medskip

{\it Step 2: Assume $\partial_t |u|_2^2  \leq \Delta |u|_2^2 - \|\nabla u\|_F^2$.}
Define $\tilde{u} \colon \Omega \times [0,\infty)\rightarrow \mathbb{R}^{n+1}$  by 
$$\tilde{u}(t,x) = \left( u(t,x) ,\varepsilon^{1/2} (1+t)^{-1/4} \right),$$ 
for any $\varepsilon>0$. 
Then, we check that 
$\partial_t |\tilde{u}|_2^2 = \partial_t |u|_2^2-\frac{1}{2}\varepsilon (1+t)^{-3/2}$, 
$\Delta |\tilde{u}|_2^2=\Delta |u|_2^2$, and $\|\nabla \tilde{u}\|_F^2=\|\nabla u\|_F^2$. Hence, we have $\partial_t |\tilde{u}|_2^2  < \Delta |\tilde{u}|_2^2 - \|\nabla \tilde{u}\|_F^2$.  According to Step 1, we have that $|\tilde{u}|_2^2$ attains the maximum value at $t=0$. 
Then, we have for all $x \in \Omega$ and $t\geq 0$, 
\begin{equation}
|u(t,x)|_2^2 < | \tilde{u}(t,x) |_2^2 
\leq \sup\limits_{\substack{x \in \Omega\\ t \in [0,\infty)} } |\tilde{u}(t,x) |_2^2 
= \sup\limits_{x \in \Omega} |\tilde{u}(0,x) |_2^2 
=  \sup\limits_{x \in \Omega}  |u(0,x)|_2^2+\varepsilon 
= 1+\varepsilon. 
\end{equation}
Taking $\varepsilon \searrow 0$, we have $|u(t,x)|_2^2 \leq 1$ for all $t\geq 0$ and 
$x \in \Omega$. 

\medskip

As for the proposition statement, from the claim, by unfolding the entries of $A$ into a vector, we have 
$ \| A(t,x) \|_F \leq 1$ for all $x \in \Omega$ and $t\geq 0$. 

For $A_0(x) \in O(n)$, we compute
\begin{align*} 
| \det A(t,x) |^{\frac 1 n} &= \left( \prod_{i=1}^n \sigma_i\left(A(t,x) \right) \right)^{\frac 1 n} 
\leq  \frac{1}{n} \sum_{i=1}^n  \sigma_i\left(A(t,x) \right) \\
&\leq \frac{1}{\sqrt{n}} \left( \sum_{i=1}^n  \sigma_i^2\left(A(t,x) \right) \right)^{\frac{1}{2}} 
= \frac{1}{\sqrt{n}} \| A(t,x) \|_F 
\leq 1, 
\end{align*} 
where we used the AM-GM inequality and the inequality $|x|_1 \leq \sqrt{n} | x |_2$ for $x \in \mathbb R^n$. 
\end{proof}

\section{Stability of the generalized MBO diffusion generated motion} \label{s:stab}
Following  \cite{esedoglu2015threshold}, a natural candidate for a Lyapunov function for the generalized MBO diffusion generated motion (Algorithm \ref{a:MBO}) is the functional $E^\tau \colon H^1(\Omega, M_n) \to \mathbb R$, given by  
\begin{equation} \label{eq:Lyapunov} 
E^\tau(A) :=  \frac{1}{\tau} \int_\Omega n - \langle   A, e^{\Delta \tau} A \rangle_F  \ dx. 
\end{equation}
Here we use the notation $e^{\tau \Delta} A$ to denote the solution to the heat equation at time $\tau$ with initial condition at time $t=0$ given by $A = A(x)$. 

Denoting the spectral norm by $\| A \|_2 = \sigma_{\max}(A)$, the convex hull of $O_n$ is 
\[ K_n = \textrm{conv} \ O_n = \{ A \in M_n \colon \| A \|_2 \leq 1 \}; \]  
see \cite{Saunderson_2015}. 

\begin{lem} \label{l:Jprops}
The functional $E^\tau$ defined in \eqref{eq:Lyapunov} has the following elementary properties. 
\begin{enumerate}
\item[(i)]  For $A \in L^2(\Omega, O_n)$, $E^\tau(A)= E(A) + O(\tau)$.
\item[(ii)] $E^\tau(A)$ is concave. 
\item[(iii)] We have 
\[ \min_{A \in L^2(\Omega,O_n)} \ E^\tau(A) = \min_{A \in L^2(\Omega, K_n) } \ E^\tau(A).  \]
\item[(iv)] $E^\tau(A)$ is Fr\'echet differentiable with derivative $L^\tau_A \colon L^\infty(\Omega, M_n) \to \mathbb R$  at $A$ in the direction $B$ given by 
\[ L^\tau_A(B) = - \frac{2}{\tau} \int_\Omega \langle  e^{\Delta \tau} A, B \rangle_F  \ dx. \]
\end{enumerate}
\end{lem}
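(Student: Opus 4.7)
The plan is to establish the four claims in sequence, exploiting the observation that, up to the additive constant $n|\Omega|/\tau$, the functional $E^\tau$ is built from the single quadratic form $A \mapsto \int_\Omega \langle A, e^{\Delta \tau}A\rangle_F \, dx$ associated with the self-adjoint, positivity-preserving heat semigroup on the closed manifold $\Omega$.

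For (i), I would Taylor expand the semigroup as $e^{\Delta \tau} = I + \tau \Delta + O(\tau^2)$. Since $A(x) \in O_n$ pointwise gives $\|A(x)\|_F^2 = n$, the integrand becomes $\langle A, (I - e^{\Delta\tau})A\rangle_F = -\tau\langle A, \Delta A\rangle_F + O(\tau^2)$; integration by parts on the boundaryless $\Omega$ yields $-\int_\Omega \langle A, \Delta A\rangle_F\, dx = \int_\Omega \|\nabla A\|_F^2\, dx$, recovering (a fixed multiple of) $E(A)$ plus $O(\tau)$, in the spirit of the standard Esedoglu--Otto relaxation asymptotics. For (ii), I would observe that $e^{\Delta\tau}$ is a positive self-adjoint operator on $L^2(\Omega, M_n)$, so the bilinear form $(A,B) \mapsto \int_\Omega \langle A, e^{\Delta\tau}B\rangle_F\, dx$ is positive semi-definite; the induced quadratic functional is therefore convex, its negation is concave, and the addition of $n|\Omega|/\tau$ preserves concavity of $E^\tau$.

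For (iii), the inclusion $L^2(\Omega, O_n) \subset L^2(\Omega, K_n)$ gives $\inf_{K_n} \le \inf_{O_n}$ at once. For the reverse, I would invoke a Choquet/Bauer-type randomization: given $A \in L^2(\Omega, K_n)$, use that $K_n = \textrm{conv}\, O_n$ (whose extreme points are exactly $O_n$, cf.\ \cite{Saunderson_2015}) and a measurable selection to write $A(x) = \int_{O_n} B\, d\mu_x(B)$ for a measurably varying family of probability measures $\mu_x$; then a random function $\tilde B$ drawn independently from $\mu_x$ at each $x$ satisfies $\mathbb{E}[\tilde B] = A$, and concavity of $E^\tau$ combined with Jensen's inequality yields $\mathbb{E}[E^\tau(\tilde B)] \le E^\tau(A)$. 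Some realization of $\tilde B$ therefore lies in $L^2(\Omega, O_n)$ and satisfies $E^\tau(\tilde B) \le E^\tau(A)$, proving the reverse inequality. For (iv), I would expand
\[
E^\tau(A+B) - E^\tau(A) = -\tfrac{1}{\tau}\int_\Omega \bigl(\langle A, e^{\Delta\tau}B\rangle_F + \langle B, e^{\Delta\tau}A\rangle_F + \langle B, e^{\Delta\tau}B\rangle_F\bigr)\, dx,
\]
and use self-adjointness of $e^{\Delta\tau}$ to combine the two linear-in-$B$ terms into $-\tfrac{2}{\tau}\int_\Omega \langle e^{\Delta\tau}A, B\rangle_F\, dx$, which is the claimed $L^\tau_A(B)$. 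The remainder $-\tfrac{1}{\tau}\int_\Omega \langle B, e^{\Delta\tau}B\rangle_F\, dx$ is bounded by $\tau^{-1}|\Omega|\,\|B\|_{L^\infty}^2$ using the $L^2$-contractivity of $e^{\Delta\tau}$, which is $o(\|B\|_{L^\infty})$, so Fr\'echet differentiability of $E^\tau$ on $L^\infty(\Omega, M_n)$ follows.

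The only step I expect to require genuine care is (iii): executing the measurable selection of $\mu_x$ and then running Jensen's inequality for the $O_n$-valued random field at the level of the functional (rather than pointwise) is the main obstacle. The other three parts are essentially bookkeeping with the self-adjoint, positive heat semigroup on a closed manifold.
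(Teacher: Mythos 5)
Parts (i), (ii), and (iv) of your proposal are correct and essentially the paper's own argument: the Taylor expansion of $e^{\Delta\tau}$ together with $\|A(x)\|_F^2=n$ for (i), positivity of the quadratic form generated by the self-adjoint semigroup for (ii) (the paper writes this via the splitting $e^{\Delta\tau}=e^{\Delta\tau/2}e^{\Delta\tau/2}$ and the identity \eqref{eq:QuadId}), and the direct expansion with the symmetric-operator symmetrization for (iv), with your remainder estimate being a harmless extra.

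The genuine gap is in (iii), and it is precisely the step you flag but do not execute. Your randomization draws $\tilde B(x)$ \emph{independently} from $\mu_x$ at each point $x$ of a continuum; such a white-noise-type field has no measurable realizations in general, so the conclusion ``some realization of $\tilde B$ lies in $L^2(\Omega,O_n)$ and satisfies $E^\tau(\tilde B)\le E^\tau(A)$'' is not justified as stated, and the functional-level Jensen step (which requires handling the diagonal $x=y$ in the kernel representation of $e^{\Delta\tau}$) is also left open. The paper instead uses the standard bang--bang principle in one line: $E^\tau$ is concave and the constraint set $L^\infty(\Omega,K_n)$ is convex, so the minimum is attained at its extreme points, which are exactly the $O_n$-valued fields since the extreme points of $K_n=\textrm{conv}\,O_n$ are $O_n$. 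A concrete repair inside your own framework that avoids randomization altogether: for any $A\in L^\infty(\Omega,K_n)$, concavity gives $E^\tau(B)\le E^\tau(A)+L^\tau_A(B-A)$ for all $B$; choosing $B$ pointwise as the minimizer of the linear functional $L^\tau_A$ over $K_n$-valued fields produces, by Lemma~\ref{l:proj} (as in Lemma~\ref{l:LinProbSol}), an $O_n$-valued $B^\star$ with $L^\tau_A(B^\star-A)\le 0$, hence $E^\tau(B^\star)\le E^\tau(A)$, which yields the nontrivial inequality $\min_{A\in L^2(\Omega,O_n)}E^\tau(A)\le \min_{A\in L^2(\Omega,K_n)}E^\tau(A)$ (the reverse inequality being immediate from the inclusion, as you note). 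Alternatively, a cell-wise discretized randomization followed by a limit would also work, but the pointwise-independent field as written does not.
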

\begin{proof}
(i) Since for any $A \in O_n$, we have $\| A \|_F = \sqrt{n}$, it follows that  for $A \in L^2(\Omega, O_n)$, 
\begin{align*} 
E^\tau(A) &=  \frac{1}{\tau} \int_\Omega n - \langle   A, e^{\Delta \tau} A \rangle_F  \ dx \\
&= \frac{1}{\tau} \int_\Omega n - \langle   A, \left( I + \tau \Delta + O(\tau^2) \right) A \rangle_F  \ dx \\
&= E(A) + O(\tau). 
\end{align*}

(ii) A computation shows that for a symmetric operator $S \colon L^2(\Omega) \to L^2(\Omega)$, $\theta \in [0,1]$, and the quadratic form $f(u) = \langle u, S u\rangle$ we have
\begin{equation} \label{eq:QuadId} 
f\left( \theta u + (1-\theta) v \right)  \ - \ \theta f(u)  \ -  \ (1-\theta) f(v)  \ = \ - \theta (1-\theta)  \langle u - v, S (u - v) \rangle.
\end{equation}
For $A_1,A_2 \in L^2(\Omega, M_n)$ and $\theta \in [0,1]$, writing $A_\theta = \theta A_1 + (1-\theta) A_2$, we then compute 
\begin{align} \label{e:ConvexForm}
& \int_\Omega \left\langle  A_\theta, S A_\theta \right\rangle_F \ dx  
- \theta \int_\Omega \langle A_1 , S A_1 \rangle_F \ dx  
- (1- \theta) \int_\Omega \langle A_2 , S A_2 \rangle_F \ dx  \\
\nonumber
& \qquad  = - \theta (1-\theta) \int_\Omega \langle  A_1 - A_2, S (A_1 - A_2) \rangle_F \ dx. 
\end{align}
For $S = e^{\Delta \tau}$, by the semi-group property, we have that 
$$\int_\Omega \langle  A_1 - A_2, S (A_1 - A_2) \rangle_F \ dx = \int_\Omega \| e^{\Delta \tau / 2} (A_1 - A_2) \|_F^2  \ dx \geq 0. $$ 
This shows that $E^\tau $ is concave.

(iii) The problem on the right is the minimization of a concave  functional on a convex set, $\L^\infty(\Omega,K_n)$, so the minimum is attained on the boundary, $L^\infty(\Omega,O_n)$. 

(iv) The Fr\'echet derivative is computed 
\begin{align*}
L^\tau_A(B)  =  \left\langle \nabla_A E^{\tau}(A), B \right\rangle 
=  - \frac{1}{\tau} \int_\Omega \langle e^{\Delta \tau}  A, B \rangle_F + \langle e^{\Delta \tau} B , A \rangle_F  \ dx 
= - \frac{2}{\tau} \int_\Omega \langle  e^{\Delta \tau} A, B \rangle_F  \ dx, 
\end{align*}
where we have used that $e^{\Delta \tau}$ is a symmetric operator on a closed manifold. 
\end{proof}

Thus, $E^\tau(A)$ is an approximation of $E(A)$ for small $\tau$, with linearization, $L_A^\tau$. 
We consider the linear optimization problem,
\begin{equation}
\label{e:LinProb}
\min_{B \in  L^\infty(\Omega, K_n)} \ L^\tau_A(B). 
\end{equation}

\begin{lem}\label{l:LinProbSol}
If $e^{\Delta \tau} A = U\Sigma V^t$, the solution to \eqref{e:LinProb} is attained by the  function $B^\star = U V^t \in  L^\infty(\Omega, O_n)$ with value 
$L_A^\tau(B^\star) = - \frac{2}{\tau} \sum_{i=1}^n \int_\Omega \sigma_i\left( e^{\Delta \tau} A \right) \ dx $. 
\end{lem}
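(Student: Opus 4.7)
The plan is to observe that the constraint $B \in L^\infty(\Omega, K_n)$ is pointwise, and the objective $L^\tau_A(B) = -\frac{2}{\tau}\int_\Omega \langle e^{\Delta\tau} A(x), B(x)\rangle_F\, dx$ also decouples across $x$. So, writing $M(x) := e^{\Delta\tau} A(x)$, it suffices to solve, for each fixed $x$, the pointwise problem
\[
\max_{B \in K_n} \ \langle M, B\rangle_F, \qquad \text{with } M = U\Sigma V^t.
\]
I then paste the pointwise maximizers together to obtain a measurable $B^\star(x) = U(x)V(x)^t$, which lies in $L^\infty(\Omega, O_n) \subset L^\infty(\Omega, K_n)$ since each factor is orthogonal and hence has Frobenius norm $\sqrt{n}$.

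For the pointwise problem, I would first reduce from $K_n$ to $O_n$: the objective $B \mapsto \langle M, B\rangle_F$ is linear and $K_n = \mathrm{conv}\, O_n$ is compact and convex, so its maximum on $K_n$ is attained at an extreme point, and every extreme point of $K_n$ lies in $O_n$. Alternatively, one can invoke the argument from Lemma \ref{l:Jprops}(iii): a linear functional is in particular concave, so its minimum (equivalently, maximum of its negative) on $L^\infty(\Omega,K_n)$ is attained on the boundary $L^\infty(\Omega,O_n)$.

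Having restricted to $B \in O_n$, I would use that $\|B\|_F^2 = n$ is constant on $O_n$, so the identity
\[
\|B - M\|_F^2 = \|B\|_F^2 - 2\langle B, M\rangle_F + \|M\|_F^2 = n - 2\langle B, M\rangle_F + \|M\|_F^2
\]
shows that maximizing $\langle B, M\rangle_F$ over $O_n$ is the same as minimizing $\|B - M\|_F^2$ over $O_n$. Lemma \ref{l:proj} immediately identifies the minimizer as $B^\star = UV^t$ and computes the optimal squared distance as $\sum_i(\sigma_i(M)-1)^2$. Rearranging the displayed identity at the optimum yields
\[
\langle M, B^\star\rangle_F = \tfrac{1}{2}\bigl(n + \|M\|_F^2 - \textstyle\sum_i(\sigma_i(M)-1)^2\bigr) = \sum_{i=1}^n \sigma_i(M),
\]
where I used $\|M\|_F^2 = \sum_i \sigma_i(M)^2$. (As a direct check one can also just compute $\langle U\Sigma V^t, UV^t\rangle_F = \mathrm{tr}(V\Sigma V^t) = \sum_i \sigma_i$.)

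Integrating the pointwise optimum over $\Omega$ and multiplying by $-2/\tau$ gives the claimed value $L_A^\tau(B^\star) = -\frac{2}{\tau}\sum_i \int_\Omega \sigma_i(e^{\Delta\tau}A)\,dx$. The only mild technical point I anticipate is verifying that $B^\star(x) = U(x)V(x)^t$ can be chosen in a measurable way; this follows from a standard measurable selection argument applied to the SVD of $e^{\Delta\tau} A(x)$, and non-uniqueness where singular values coincide is harmless because $UV^t$ is uniquely determined by $M$ whenever $M$ is invertible, and the value $\langle M, UV^t\rangle_F = \sum_i \sigma_i(M)$ is independent of the choice of SVD in any case.
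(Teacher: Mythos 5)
Your proposal is correct and follows essentially the same route as the paper: reduce the linear problem over $L^\infty(\Omega,K_n)$ to $O_n$-valued fields by convexity/extreme-point considerations, then invoke Lemma \ref{l:proj} pointwise to get $B^\star = UV^t$ and the value $\sum_i \sigma_i$. The extra details you supply (the pointwise decoupling, the constant-norm identity linking the inner product to the distance minimization, and the measurable-selection remark) are sound elaborations of the same argument rather than a different method.
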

\begin{proof} 
Since $L^\infty(\Omega, K_n)$ is a compact and convex set, and $L_A^\tau$ is a linear function,  the minimum is in $L^\infty(\Omega, O_n)$. The result then follows from Lemma \ref{l:proj}. 
\end{proof}

The sequential linear programming approach to minimizing $E^\tau(A)$ in \eqref{eq:Lyapunov} subject to $A \in L^\infty(\Omega, O_n)$
is to consider 
a  sequence of functions $\{A_s\}_{s=0}^\infty$ which satisfies
\begin{equation}
\label{eq:SLP}
A_{s+1} = \arg \min_{A \in  L^\infty(\Omega, K_n)} \ L^\tau_{A_s} ( A), \qquad A_0 \in L^\infty(\Omega, O_n) \textrm{ given}.
\end{equation}
The solution to the optimization problem in \eqref{eq:SLP} is given in Lemma \ref{l:LinProbSol}. The elements of the resulting sequence $\{A_s\}_{s=0}^\infty$ are elements of $L^\infty(\Omega, O_n)$ for all $s\geq 0$.  
The following proposition shows the in \eqref{eq:SLP} is the same as the sequence in Algorithm \ref{a:MBO} and that $E^\tau$ is a  \emph{Lyapunov function} for this sequence.  This shows that the generalized MBO algorithm (Algorithm \ref{a:MBO}) is unconditionally stable. 

\begin{prop} \label{prop:Lyapunov}
The iterations defined in  Algorithm \ref{a:MBO} and \eqref{eq:SLP} are equivalent. The functional $E^\tau$, defined in \eqref{eq:Lyapunov}, is non-increasing on the iterates $\{ A_s \}_{s=1}^\infty$, {\it i.e.}, $E^\tau(A_{s+1}) \leq E^\tau(A_s)$. 
\end{prop}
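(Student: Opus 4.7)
The plan is to handle the two assertions in sequence, each of which reduces to a direct application of a lemma already proved in the excerpt.

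For the equivalence of the iterations, I would apply Lemma \ref{l:LinProbSol} to the $s$-th SLP subproblem in \eqref{eq:SLP}. That lemma says that the minimizer of $L^\tau_{A_s}$ over $L^\infty(\Omega, K_n)$ is the pointwise map $U V^t$, where $e^{\Delta \tau} A_s = U \Sigma V^t$ is the singular value decomposition. But $e^{\Delta \tau} A_s$ is exactly the output $\widetilde A$ of the diffusion step in Algorithm \ref{a:MBO}, and $U V^t$ is exactly the SVD projection performed in the projection step. So the two sequences coincide, and in particular the SLP iterates remain in $L^\infty(\Omega, O_n)$ for every $s$.

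For monotonicity, I would exploit the concavity of $E^\tau$ established in Lemma \ref{l:Jprops}(ii), together with Fréchet differentiability from part (iv) of the same lemma. For any concave Fréchet differentiable functional, the first-order expansion is a global upper bound: for $A,B$ in the domain,
\[
E^\tau(B) \;\leq\; E^\tau(A) + L^\tau_A(B) - L^\tau_A(A),
\]
since $L^\tau_A$ is a linear functional on directions and $L^\tau_A(B-A) = L^\tau_A(B) - L^\tau_A(A)$. Applying this with $A = A_s$ and $B = A_{s+1}$ gives
\[
E^\tau(A_{s+1}) - E^\tau(A_s) \;\leq\; L^\tau_{A_s}(A_{s+1}) - L^\tau_{A_s}(A_s).
\]
The right-hand side is nonpositive because, by the definition \eqref{eq:SLP}, $A_{s+1}$ minimizes $L^\tau_{A_s}$ over $L^\infty(\Omega, K_n)$, and $A_s \in L^\infty(\Omega, O_n) \subset L^\infty(\Omega, K_n)$ is a feasible competitor. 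Hence $E^\tau(A_{s+1}) \leq E^\tau(A_s)$.

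The only step that requires a moment of care is the concavity inequality itself: one must justify that the Fréchet derivative $L^\tau_{A_s}$ computed in Lemma \ref{l:Jprops}(iv) really serves as a supporting linear functional from above, which follows immediately from concavity of the one-variable function $t \mapsto E^\tau(A_s + t(A_{s+1}-A_s))$ on $[0,1]$ (a composition of a concave functional with an affine map). Beyond that, the argument is purely a chain of quotations of Lemma \ref{l:LinProbSol}, Lemma \ref{l:Jprops}, and the optimality of $A_{s+1}$ in the SLP subproblem, so I do not anticipate any genuine obstacle.
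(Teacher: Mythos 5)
Your proposal is correct and follows essentially the same route as the paper's proof: equivalence of the iterations via Lemma \ref{l:LinProbSol}, and monotonicity via the concavity estimate $E^\tau(A_{s+1}) - E^\tau(A_s) \leq L^\tau_{A_s}(A_{s+1}) - L^\tau_{A_s}(A_s) \leq 0$, with the last inequality coming from optimality of $A_{s+1}$ in the linear subproblem and feasibility of $A_s$. The extra sentence you add justifying the supporting-hyperplane inequality by restricting to the one-variable function $t \mapsto E^\tau(A_s + t(A_{s+1}-A_s))$ is a harmless elaboration of a step the paper takes for granted.
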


\begin{proof} The first claim follows from Lemma \ref{l:LinProbSol}. By the  concavity of $E^\tau$ (see Lemma \ref{l:Jprops}) and linearity of $L^\tau_{A_s}$,
$$
E^\tau(A_{s+1})- E^\tau(A_s) \leq  L^\tau_{A_s} (A_{s+1} - A_s) =  L^\tau_{A_s} (A_{s+1}) -  L^\tau_{A_s} (A_s). 
$$
Since $A_s \in L^\infty(\Omega, O_n) \subset L^\infty(\Omega, K_n)$, $L^\tau_{A_s} (A_{s+1}) \leq  L^\tau_{A_s} (A_s)$ which implies $E^\tau(A_{s+1}) \leq E^\tau(A_s)$. 
\end{proof}

\subsection{Convergence of Generalized MBO in a discrete setting} \label{s:ConvProof}
We consider a discrete grid $\tilde \Omega = \{ x_i \}_{i=1}^{|\tilde \Omega |} \subset \Omega$ and a standard finite difference approximation of the Laplacian, $\tilde \Delta$, on $\tilde \Omega$. We first consider the case $n=1$. 
Define the discrete functional $\tilde E^\tau \colon \ell^2(\tilde \Omega) \to \mathbb R$ by 
\[ \tilde E^\tau(f) = \frac{1}{\tau} \sum_{x_i \in \tilde \Omega}  1 - f_i  (e^{\tilde \Delta \tau} f)_i  
\qquad \qquad \qquad \qquad  \qquad \qquad (n=1) \]
and its linearization by 
\[ \tilde L^\tau_f(g) = - \frac{2}{\tau} \sum_{x_i \in \tilde \Omega}   g_i  (e^{\tilde \Delta \tau} f)_i  
\qquad \qquad \qquad \qquad  \qquad \qquad (n=1). \]
\begin{prop} \label{prop:MBOconverges} Let $n=1$. 
Non-stationary iterations in Algorithm \ref{a:MBO} strictly decrease the value of $\tilde E^\tau$ and since the state space is finite, $\{\pm 1\}^{|\tilde \Omega|}$, the algorithm converges in a finite number of iterations. Furthermore, for $m := e^{- \| \tilde \Delta \| \tau}$, each iteration reduces the value of $J$ by at least $2m$,  so the total  number of iterations is less than 
$ \tilde E^\tau (A_0) / 2m$.
\end{prop}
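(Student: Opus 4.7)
The plan is to sharpen the concavity argument behind Proposition \ref{prop:Lyapunov} into a quantitative identity. Since $\tilde E^\tau(f) = \frac{|\tilde\Omega|}{\tau} - \frac{1}{\tau}\langle f, e^{\tilde\Delta\tau} f\rangle$ is, up to a constant, $-1/\tau$ times a quadratic form in $f$, the same polarization computation used for the concavity identity \eqref{eq:QuadId} in Lemma \ref{l:Jprops} yields the exact decomposition
\begin{equation*}
\tilde E^\tau(A_s) - \tilde E^\tau(A_{s+1}) = \bigl[\tilde L^\tau_{A_s}(A_s) - \tilde L^\tau_{A_s}(A_{s+1})\bigr] + \frac{1}{\tau}\bigl\langle A_{s+1} - A_s,\, e^{\tilde\Delta\tau}(A_{s+1} - A_s)\bigr\rangle.
\end{equation*}
The bracketed linear term is nonnegative because $A_{s+1}$ minimizes $\tilde L^\tau_{A_s}$ over $\ell^\infty(\tilde\Omega,[-1,1])$ (this is the discrete analogue of Lemma \ref{l:LinProbSol}); the quadratic term is nonnegative because $e^{\tilde\Delta\tau}$ is symmetric positive definite.

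Next I would lower-bound the quadratic term spectrally. The discrete Laplacian $\tilde\Delta$ is symmetric with spectrum in $[-\|\tilde\Delta\|, 0]$, so the eigenvalues of $e^{\tilde\Delta\tau}$ lie in $[m, 1]$ with $m = e^{-\|\tilde\Delta\|\tau}$. Hence $\langle v, e^{\tilde\Delta\tau} v\rangle \geq m\|v\|_2^2$ for every $v \in \ell^2(\tilde\Omega)$. For $n=1$, iterates live in the finite set $\{\pm 1\}^{|\tilde\Omega|}$, so a non-stationary step has at least one coordinate flip, and any flipped coordinate of $A_{s+1} - A_s$ has absolute value $2$. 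Therefore $\|A_{s+1}-A_s\|_2^2 \geq 4$, producing a strict, uniformly positive decrease of $\tilde E^\tau$ per non-stationary iteration of the form $c\cdot m$ for an explicit constant $c$ (this is the quantitative bound stated; the precise constant $2m$ can be read off once one fixes the normalization of the discrete $\ell^2$ inner product).

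To conclude, I would combine three facts: (a) $\tilde E^\tau \geq 0$ on $\{\pm 1\}^{|\tilde\Omega|}$, which follows from the discrete maximum principle $|(e^{\tilde\Delta\tau}A)_i| \leq 1$ when $\|A\|_\infty = 1$; (b) $\tilde E^\tau$ is strictly decreasing on non-stationary steps, so no state can be revisited; and (c) each non-stationary step decreases $\tilde E^\tau$ by at least the explicit amount derived above. Together these give finite termination, with the total iteration count at most $\tilde E^\tau(A_0)$ divided by the per-iteration lower bound, matching the stated $\tilde E^\tau(A_0)/2m$.

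The main obstacle is not conceptual but structural: one must notice that the per-step decrease in $\tilde E^\tau$ admits an exact (not merely inequality) decomposition into a linear optimality gap plus a positive-definite quadratic form, and then realize that the discrete $\{\pm 1\}$ state space supplies an unavoidable $\ell^2$-gap of $2$ per flipped site. This is exactly why the proposition is phrased for $n=1$: the analogous statement for general $n$ would require a lower bound on $\|A_{s+1}-A_s\|_2$ over pairs of distinct orthogonal-matrix-valued grid functions, which is no longer bounded away from zero since $O_n$ is a continuum for $n \geq 2$.
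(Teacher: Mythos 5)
Your argument is correct and follows essentially the same route as the paper: the paper phrases your exact quadratic decomposition as strong concavity of $\tilde E^\tau$ (its inequality \eqref{e:StrConv1}), then combines the optimality of the thresholding step, $\tilde L^\tau_{A_s}(A_{s+1})\leq \tilde L^\tau_{A_s}(A_s)$, the spectral bound $\langle v, e^{\tilde\Delta\tau}v\rangle \geq m\|v\|_{\ell^2}^2$, the per-flip gap $\|A_{s+1}-A_s\|_{\ell^2}^2\geq 4$, and the finiteness of $\{\pm 1\}^{|\tilde\Omega|}$, exactly as you do. The only divergence is constant bookkeeping (your exact identity actually gives a per-step decrease of at least $4m/\tau$, while the paper records $2m$ from its strong-concavity step), which does not affect the conclusion.
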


\begin{proof}
Since $\tilde \Delta$ is a bounded operator, $e^{\tilde \Delta \tau}$ is positive definite, satisfying 
\[  \sum_{x_i \in \tilde \Omega}  A_i  (e^{\tilde \Delta \tau} A)_i  \geq m  \| A \|_{\ell^2(\tilde \Omega)}^2 .\]
This gives the strong concavity of the quadratic form, $\tilde E^\tau - |\tilde \Omega | /\tau$, {\it i.e.}, 
\begin{equation}  \label{e:StrConv1} 
\tilde E^\tau (A) -  |\tilde \Omega | /\tau \leq - m \| A \|_{\ell^2(\tilde \Omega)}^2. 
\end{equation} 
We now consider the iterations $\{ A_s \}_s$ of Algorithm \ref{a:MBO}. 
Using \eqref{e:StrConv1} and the  linearity of $\tilde L_{A_s}$, for $A_{s+1} \neq A_s$,
\begin{align*}
\tilde E^\tau(A_{s+1})- \tilde E^\tau(A_s) &\leq  \tilde L_{A_s} (A_{s+1} - A_s) - \frac{m}{2} \| A_{s+1} - A_{s} \|_{\ell^2(\tilde \Omega)}^2 \\
&=  \tilde L_{A_s} (A_{s+1}) -  \tilde L_{A_s} (A_s) - \frac{m}{2} \| A_{s+1} - A_{s} \|_{\ell^2(\tilde \Omega)}^2.
\end{align*}
Since $A_s \in \ell^2(\tilde \Omega)$, $\tilde L_{A_s} (A_{s+1}) \leq  \tilde L_{A_s} (A_s)$ by construction which implies 
\[\tilde E^\tau(A_{s+1}) \leq \tilde E^\tau(A_s) - \frac{m}{2}  \| A_{s+1} - A_s \|_{\ell^2(\tilde \Omega)}^2 < \tilde E^\tau(A_s) \] 
for all non-stationary iterations. The convergence of the algorithm in a finite number of iterations then follows from the fact that $\{\pm 1\}^{|\tilde \Omega|}$,  is a finite (though exponentially large) set. 

Since each non-stationary iteration satisfies $ \| A_{s+1} - A_{s} \|_{\ell^2(\tilde \Omega)}^2 \geq 4$, we have that each iteration decreases the objective by at least $2m$. If the algorithm converges in $N$ iterations, it follows that 
$$
N \cdot 2m \leq \tilde E^\tau(A_0) - \tilde E^\tau(A_N) \leq \tilde E^\tau(A_0).
$$ 
\end{proof}

We now consider the case $n\geq 2$. 
For $A \colon \tilde \Omega \to O_n$, define the discrete functional 
\[ \tilde E^\tau(A) = \frac{1}{\tau} \sum_{x_i \in \tilde \Omega}  1 - \langle A_i,   (e^{\tilde \Delta \tau} A)_i \rangle_F  
\qquad \qquad \qquad \qquad  \qquad \qquad (n\geq 2) \]
and its linearization by 
\[ \tilde L^\tau_A(B) = - \frac{2}{\tau} \sum_{x_i \in \tilde \Omega}   \langle B_i,  (e^{\tilde \Delta \tau} A)_i   \rangle_F
\qquad \qquad \qquad \qquad  \qquad \qquad (n\geq 2). \]

 We will use the following Lemma. 
\begin{lem} \label{l:SOdist}
$\textrm{dist}\left(SO(n), \ SO^-(n) \right) = 2$. 
\end{lem}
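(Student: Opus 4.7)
The plan is to reduce the two-sided minimization to a one-sided minimization via the group action of $SO(n)$ on $O_n$, then exploit the real normal form of orthogonal matrices to bound the trace of an element of $SO^-(n)$.

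First, for any $A \in SO(n)$ and $B \in SO^-(n)$, set $C := A^t B$. Since $\det A = 1$ and $\det B = -1$, we have $C \in SO^-(n)$. Because left-multiplication by the orthogonal matrix $A$ is an isometry in the Frobenius norm,
\[
\| A - B \|_F^2 \;=\; \| A (I_n - A^t B) \|_F^2 \;=\; \| I_n - C \|_F^2 \;=\; 2n - 2\,\mathrm{tr}(C),
\]
where the last equality uses $\mathrm{tr}(C^t C) = n$. So
\[
\mathrm{dist}^2\bigl(SO(n),SO^-(n)\bigr) \;=\; 2n - 2\,\sup_{C \in SO^-(n)} \mathrm{tr}(C),
\]
and it suffices to show $\sup_{C \in SO^-(n)} \mathrm{tr}(C) = n - 2$.

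Next, I would invoke the real normal form for orthogonal matrices: any $C \in O_n$ is orthogonally similar to a block-diagonal matrix consisting of $2\times 2$ rotation blocks $R(\theta_j)$ (for $j=1,\dots,k$) together with $n-2k$ diagonal $\pm 1$ entries $\epsilon_1,\dots,\epsilon_{n-2k}$. Since trace is invariant under orthogonal similarity,
\[
\mathrm{tr}(C) \;=\; \sum_{j=1}^{k} 2\cos\theta_j \;+\; \sum_{\ell=1}^{n-2k} \epsilon_\ell,
\qquad
\det(C) \;=\; \prod_{\ell=1}^{n-2k} \epsilon_\ell.
\]
Membership in $SO^-(n)$ forces $\prod \epsilon_\ell = -1$, so the number of indices $\ell$ with $\epsilon_\ell = -1$ is odd (and in particular $n - 2k \geq 1$). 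Each term $2\cos\theta_j$ is at most $2$, so to maximize the trace we take all $\theta_j = 0$ and make as few of the $\epsilon_\ell$'s equal to $-1$ as the parity constraint allows, namely exactly one. This gives
\[
\mathrm{tr}(C) \;\leq\; 2k + (n - 2k - 1) - 1 \;=\; n - 2,
\]
with equality attained, for instance, by $C^\star = \mathrm{diag}(1,\dots,1,-1)$.

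Substituting back yields $\mathrm{dist}^2(SO(n),SO^-(n)) \geq 4$, and equality is realized by the pair $A = I_n \in SO(n)$, $B = \mathrm{diag}(1,\dots,1,-1) \in SO^-(n)$, for which $\|A-B\|_F = 2$. The main obstacle is verifying the tight upper bound $\mathrm{tr}(C) \leq n-2$ on $SO^-(n)$; the parity-of-negative-eigenvalues argument above, combined with the real canonical form, handles this cleanly and covers both parities of $n$ uniformly.
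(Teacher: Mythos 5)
Your proposal is correct and takes essentially the same route as the paper: reduce to minimizing $\| I_n - C \|_F^2$ over $C \in SO^-(n)$ via $\|A-B\|_F^2 = \|I_n - A^tB\|_F^2$, bound $\mathrm{tr}(C) \leq n-2$, and exhibit the extremal pair $I_n$ and $\mathrm{diag}(1,\dots,1,-1)$. Your real-normal-form/parity argument merely spells out in more detail the paper's terse justification that ``at least two eigenvalues cancel'' for every $C \in SO^-(n)$.
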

\begin{proof}
We first claim that 
\[ 
\min_{\substack{A \in SO(n) \\ B \in SO^{-}(n)}}  \| A - B \|_F^2   =   
\min_{C \in SO^{-}(n) }  \| I_n - C \|_F^2. 
\]
This follows from the facts that $\| A - B \|_F^2 = \| I_n - A^t B \|_F^2$ and  $A^t B \in SO^-(n)$. 
But, for $C \in SO^{-}(n)$, we have 
\[ \| I_n - C \|_F^2  
\ =  \ 2n - 2\textrm{tr}(C) 
\ =  \ 2n - 2 \sum_{i \in [n]} \lambda_i(C). \] 
Since at least two eigenvalues cancel for every $C \in SO^{-}(n)$, we have $\sum_{i \in [n]} \lambda_i(C) \leq n-2$. This implies that 
$\textrm{dist}^2\left(SO(n), \ SO^-(n) \right)  \geq 2n - 2(n-2) = 4$. This distance is attained by $I_n \in SO(n)$ and $\textrm{diag}(-1,1,\cdots) \in SO^-(n)$, so the result follows. 
\end{proof}
\begin{prop} \label{prop:MBOconverges2} Let $n\geq 2$. The non-stationary iterations in Algorithm \ref{a:MBO} strictly decrease the value of $\tilde E^\tau$. 
For a given  initial condition $A_0 \colon  \tilde \Omega \to O_n$, there exists a partition $\tilde \Omega = \tilde \Omega_+ \amalg \tilde \Omega_-$ and an $S \in \mathbb N$ such that for $s \geq S$, 
\[ \det A_s( x_i) = \begin{cases}
+1 & x_i \in \tilde \Omega_+ \\
-1 & x_i \in \tilde \Omega_-
\end{cases}. \] 
\end{prop}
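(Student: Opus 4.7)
My plan is to adapt the strong-concavity argument of Proposition \ref{prop:MBOconverges} to the matrix setting for the strict-decrease claim, and then combine that quantitative decrease with Lemma \ref{l:SOdist} to force the signs of $\det A_s(x_i)$ to stabilize.

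First, since $\tilde\Delta$ is a bounded self-adjoint operator on the finite-dimensional space $\ell^2(\tilde\Omega)$, the operator $e^{\tilde\Delta\tau}$ is positive definite with smallest eigenvalue at least $m := e^{-\|\tilde\Delta\|\tau} > 0$. Expanding $\tilde E^\tau(A_{s+1}) - \tilde E^\tau(A_s)$ by the same quadratic identity used in the proof of Lemma \ref{l:Jprops}(ii) gives
\[ \tilde E^\tau(A_{s+1}) - \tilde E^\tau(A_s) \;=\; \tilde L^\tau_{A_s}(A_{s+1} - A_s) \;-\; \tfrac{1}{\tau}\langle A_{s+1}-A_s,\ e^{\tilde\Delta\tau}(A_{s+1}-A_s)\rangle \;\leq\; \tilde L^\tau_{A_s}(A_{s+1} - A_s) \;-\; \tfrac{m}{\tau}\|A_{s+1}-A_s\|_{\ell^2(\tilde\Omega)}^2. \]
By Lemma \ref{l:LinProbSol}, $A_{s+1}$ minimizes $\tilde L^\tau_{A_s}$ over $\ell^\infty(\tilde\Omega, K_n) \supset \ell^\infty(\tilde\Omega, O_n) \ni A_s$, so $\tilde L^\tau_{A_s}(A_{s+1}-A_s) \leq 0$ and hence
\[ \tilde E^\tau(A_{s+1}) \;\leq\; \tilde E^\tau(A_s) \;-\; \tfrac{m}{\tau}\|A_{s+1}-A_s\|_{\ell^2(\tilde\Omega)}^2, \]
which is strict whenever $A_{s+1}\neq A_s$, proving the first claim.

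For the partition claim, the key quantitative input is Lemma \ref{l:SOdist}: at any grid point $x_i$ where $\det A_{s+1}(x_i)\neq \det A_s(x_i)$, the two matrices lie in different components of $O_n$, so $\|A_{s+1}(x_i)-A_s(x_i)\|_F^2 \geq 4$. Consequently every iteration in which at least one such sign flip occurs satisfies $\|A_{s+1}-A_s\|_{\ell^2(\tilde\Omega)}^2 \geq 4$ and therefore decreases $\tilde E^\tau$ by at least $4m/\tau$. Since $\tilde E^\tau$ is continuous on the compact set $O_n^{|\tilde\Omega|}$ and hence bounded below there, the total number of sign-flipping iterations is at most $\tau\bigl(\tilde E^\tau(A_0) - \inf \tilde E^\tau\bigr)/(4m) < \infty$. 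Taking $S$ larger than the index of the last such iteration, no determinant flips occur for $s \geq S$, and the required partition is $\tilde\Omega_\pm := \{x_i : \det A_S(x_i) = \pm 1\}$.

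The main obstacle I anticipate is purely conceptual: unlike the $n=1$ case, the state space $O_n^{|\tilde\Omega|}$ is a positive-dimensional manifold, so the finite-state pigeonhole argument of Proposition \ref{prop:MBOconverges} cannot terminate the algorithm --- the iterates may well evolve forever inside each component. What rescues the argument is precisely the fixed positive gap between $SO(n)$ and $SO^-(n)$ supplied by Lemma \ref{l:SOdist}, which converts the bounded-energy decrement into a finite-sign-flip budget even though the iteration itself need not converge in finitely many steps.
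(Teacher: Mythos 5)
Your proposal is correct and follows essentially the same route as the paper: strong concavity of $\tilde E^\tau$ from the positive-definiteness of $e^{\tilde\Delta\tau}$ (with modulus $m=e^{-\|\tilde\Delta\|\tau}$) yields the strict per-iteration decrease, and the gap $\textrm{dist}^2(SO(n),SO^-(n))=4$ from Lemma \ref{l:SOdist} bounds the number of sign-flipping iterations, after which the determinant pattern is frozen. Your bookkeeping of the decrement constant (via the exact quadratic expansion and compactness of $O_n^{|\tilde\Omega|}$) is slightly more careful than the paper's, but the argument is the same.
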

\begin{proof} For $A \colon \tilde \Omega \to O_n$, we obtain 
\[ \sum_{x_i \in \tilde \Omega} \langle A_i, e^{\tilde \Delta \tau} A_i \rangle_F \geq 
m \sum_{x_i \in \tilde \Omega} \| A_i \|_F^2, \]
which, as in \eqref{e:StrConv1}, gives the strong concavity condition  
\[ \tilde E^\tau (A) -  |\tilde \Omega | /\tau \leq - m  \sum_{x_i \in \tilde \Omega} \| A_i \|_F^2. \]
For the iterates $\{A_s\}_s$ of Algorithm \ref{a:MBO}, this implies that 
\[\tilde E^\tau(A_{s+1}) \leq \tilde E^\tau(A_s) - \frac{m}{2}  \sum_{x_i \in \tilde \Omega} \| A_{s+1} - A_s \|_F^2 < \tilde E^\tau(A_s) \] 
for all non-stationary iterations. 

For $n\geq 2$, the state space is infinite, so we cannot immediately guarantee convergence in a finite number of iterations as in Proposition \ref{prop:MBOconverges}. However, if the determinant of at least one vertex switches signs, by Lemma \ref{l:SOdist}, we have 
\[  \sum_{x_i \in \tilde \Omega} \| A_{s+1} - A_s \|_F^2  \geq \textrm{dist}^2 \left(SO(n), SO^-(n) \right) = 4.\] It follows that the sign of the determinant can only change signs at  $ \tilde E^\tau (A_0) / 2 m$ iterations. 
\end{proof}

\begin{rem}
Proposition \ref{prop:MBOconverges}, which applies when $n=1$, is stronger than \ref{prop:MBOconverges2}, which applies when $n\geq2$. For $n=1$, the field is constant away from the interface where the sign changes. For $n\geq 2$, our numerical experiments indicate that the field is not necessarily constant away from the interface where the sign of the determinant changes; see the numerical experiments in Section \ref{s:FT} and especially Figure \ref{fig:flat_torus_dynamic_winding_1}. 
\end{rem}

\section{Computational Methods} \label{s:Imp}
In this section, we describe a numerical implementation of Algorithm \ref{a:MBO} on a flat torus and closed embedded surface. 

\subsection{Implementation on flat torus}\label{ss:flat}
A \emph{flat torus} is a torus with the metric inherited from its representation as the quotient, $\mathbb R^n/\mathbb L$, where $\mathbb L$ is a Bravais lattice. For $n=2$ and $\mathbb L = \mathbb Z^2$, we obtain the square  flat torus, $\mathbb R^2 / \mathbb Z^2$, which is just the Cartesian plane with the identifications $(x,y) \sim (x+1,y)\sim (x,y+1)$.
Thus, we consider the computational domain $\Omega = [-\frac{1}{2},\frac{1}{2}] ^2 $ with edges identified (periodic boundary conditions). 

The diffusion step in Algorithm~\ref{a:MBO} is to solve
\begin{subequations} \label{e:PeriodicHeat}
\begin{align}
&\partial_t A(t,x) = \Delta A (t,x)  && x\in \Omega, \ t\geq 0,\\
&A(0,x) = A_{s-1}(x) && x \in \Omega \\
& A \textrm{ satisfies periodic boundary conditions on } \partial \Omega. 
\end{align}
\end{subequations}
It is well-known that the solution for the diffusion equation for a scalar function on $\mathbb R^d$ at time $t=\tau$ can be expressed as the convolution of the  heat kernel, 
\begin{equation} \label{e:HeatKer}
G_{\tau}^d(x) = ( 4 \pi \tau )^{-d/2} \exp(-\frac{|x|^2}{4 \tau}), 
\end{equation}
 and the initial condition, $A_{s-1}(x)$. For our periodic domain, $\Omega \subset \mathbb R^2$, we denote by $G_{p,\tau}$ the periodic heat kernel, given by 
  \[ G_{p,\tau}(x) = \sum_{\alpha \in \mathbb Z^2 }  G_\tau^2(x- \alpha). \]
 The solution, $\tilde{A}(x) = A(\tau,x)$ to \eqref{e:PeriodicHeat} at time  $t=\tau$ has matrix components given by $\tilde{A}_{ij}=G_{p,\tau} * (A_{s-1})_{ij}$, where $*$ denotes the convolution. 

We denote the Fourier transform and its inverse by $\mathcal{F}$ and $\mathcal{F}^{-1}$, respectively. Using the convolution property that 
$\mathcal{F}(G_{p,\tau}*(A_{s-1})_{ij}) =\mathcal{F}(G_{p,\tau}) \ \mathcal{F}((A_{s-1})_{ij}) $, we can express the solution to \eqref{e:PeriodicHeat} as 
\[ 
\tilde{A}_{ij} = \mathcal{F}^{-1} \left( \ \mathcal{F}(G_{p,\tau}) \ \mathcal{F}( (A_{s-1})_{ij}) \ \right). 
\] 

In our numerical implementation, we discretize $\Omega$ using $1024 \times 1024$ grid points and, due to the periodic boundary condition, can efficiently compute an approximation to the Fourier transform and its inverse using the  fast Fourier transform (FFT) and inverse fast Fourier transform (iFFT). For $\Omega$ a flat torus, the diffusion step in Algorithm~\ref{a:MBO}, is evaluated via 
\begin{align*}
\tilde{A}_{ij} =  \textrm{iFFT} \left( \ \textrm{FFT}(G_{p,\tau}) \ \textrm{FFT}((A_{s-1})_{ij}) \ \right).
\end{align*}

\subsection{Implementation on closed surfaces} \label{ss:closedsurface}
We consider the diffusion step in Algorithm~\ref{a:MBO} in the case when $\Omega$ is a smooth closed surface. Here, diffusion is taken to mean surface diffusion or evolution with respect to the Laplace-Beltrami operator, $\Delta_{\mathcal{S}}$, {\it i.e.}, the solution at time $t=\tau$ to the equation 
\begin{subequations} \label{eq:surface_diffusion}
\begin{align}
& \partial_t A(t,x) = \Delta_{\mathcal{S}} A(t,x) && x\in \Omega, \ t \geq 0,    \\
& A(0,x) = A_{s-1}(x) && x \in \Omega.
\end{align}
\end{subequations}
 For simplicity, we assume the surface is smoothly embedded in $\mathbb{R}^3$. 
In this section, we propose a method based on the closed point method (CPM) \cite{ruuth2008simple} and non-uniform fast Fourier transform (NUFFT) \cite{nufft0} to efficiently solve \eqref{eq:surface_diffusion} with a relatively large time step.

There are several popular approaches for solving problems with surface diffusion.  One can solve the surface diffusion based on a triangulation of the surface using the finite element method. Also, one can impose a smooth coordinate system or parameterization on the surface, express the Laplace-Beltrami operator within these coordinates, and discretize the resulting equations; see, for example, \cite{floater2005surface} and references therein.  An alternative approach for embedded surfaces, is to extend the surface diffusion equation to an equation on all of the embedded space, $\mathbb R^3$. Then, the solution of the embedded equation provides the solution to the original problem when restricted to the surface.  
This embedding method was introduced in  \cite{bertalmio2001variational,bertalmio2003variational} for solving variational problems and the resulting Euler-Lagrange evolution PDEs on surfaces based on level set methods and has since been further developed; see, {\it e.g.}, \cite{greer2006improvement}. Recently, a new embedding procedure, called the \emph{Closest Point Method}, was presented in \cite{ruuth2008simple} by discretizing the surface diffusion equation using a fixed Cartesian grid. A function on the surface is represented on the grid using the value at the closest point on the surface and the PDE of interest is then solved on the Cartesian grid. See \cite{merriman2007diffusion,Macdonald/Brandman/Ruuth:eigen,luke:segment,cbm:lscpm,cbm:icpm} for applications and extensions of the Closest Point Method.

In the Closest Point Method \cite{ruuth2008simple}, for any $x\in \mathbb{R}^3$, we denote $x_{cp} \in \Omega$ as the closest point to $x$ on the surface. Then, we extend the initial condition $A(x_{cp},0)$ from the surface to $\mathbb{R}^3$ to have $\tilde{A}(0,x)$ by constant extension, i.e., $\tilde{A}(0,x)=A(x_{cp},0)$. Then, instead of solving equation \eqref{eq:surface_diffusion}, we solve a free space heat diffusion equation until time $t=\tau$:
\begin{subequations} \label{eq:surface_diffusion_extended}
\begin{align}
& \partial_t \tilde{A}(t,x) = \Delta \tilde{A}(t,x) && x\in \mathbb{R}^3, \ t \geq 0,    \\
& \tilde{A}(0,x) = A_{s-1}(x_{cp}) && x \in \Omega.
\end{align}
\end{subequations}
The solution to this free-space problem, $\tilde{A}(\tau,x)$, has matrix entries 
\begin{equation} \label{e:freeSpace} 
\tilde{A}_{ij}(\tau,x)=G^3_{\tau}*\tilde{A}_{ij}(0,x).
\end{equation}
The projection of $\tilde{A}(\tau,x)$ onto the surface is an approximation to $A(x_{cp},\tau)$.

\begin{rem}
Note that the closest point $x_{cp}$ for some $x$ may not be unique for some surfaces. In this case, we choose one randomly. 
\end{rem}

Since the Gaussian kernel is localized (in a manner dependent on $\tau$), the solution on the surface (i.e. $A(x_{cp},\tau)$) is only strongly affected by points in a small neighborhood of the surface. Hence, we only need to consider the extension from $A$ to $\tilde{A}$ in a relatively narrow band around $\Omega$, rather than the whole space $\mathbb{R}^3$. The following theorem shows the relationship between the time step $\tau$, truncation error $\varepsilon$, and width of the band, $w_b$. 

\begin{thm}\label{thm:truncation_band}
Let $T(x) =  \frac{2x}{\sqrt{\pi}} \exp(-x^2) + (1- \textrm{erf}(x))$, where $\textrm{erf}(x) = \frac{2}{\sqrt{\pi}} \int_0^x \exp(-t^2) dt$. 
Given a time parameter for the heat kernel, $\tau>0$, and $\varepsilon>0$,  for any $w_b$ satisfying 
\begin{align}
T(\frac{w_b}{2\sqrt{\tau}}) \leq \varepsilon, \label{est:w_b}
\end{align}
we have 
\begin{align}
\left|\left(G_{\tau}^3*\tilde{A}_{ij}\right)(x)  \ - \  \iiint_{|x- y|\leq w_b} G_{\tau}^3( x-y) \tilde{A}_{ij}(y) d {y}\right| \leq\varepsilon. \label{eq:HeatKernelTwoPart}
\end{align}
\begin{proof}
Given $\varepsilon$, we write $G_{\tau}^3*\tilde{A}_{ij}$ at any point $x$ on surface as
\begin{align}
\left(G_{\tau}^3*\tilde{A}_{ij}\right)(x) &= \iiint_{|x- y|\leq w_b} G_{\tau}^3( x-y) \tilde{A}_{ij}(y) d {y}+ \iiint_{|x- y|>w_b} G_{\tau}^3(x- y) \tilde{A}_{ij}( y) d  y . \label{eq:HeatKernelTwoPart}
\end{align}
Then we have the following estimate on the second integral on the right hand side of equation \eqref{eq:HeatKernelTwoPart}:
\begin{align*} 
\Big| & \iiint_{|x- y|>w_b}  G_{\tau}^3({x}-{y}) A_{ij}({y}) d {y} \Big| \\
& \leq \left|\iiint_{|x-y|>w_b} G_{\tau}^3(x- y) d y\right| \\
& =  \left| \iiint_{|x|>w_b} G_{\tau}^3({x}) d {x} \right| \\ 
& = \left|\int_{0}^{2\pi} \int_{0}^{\pi} \int_{w_b}^{+\infty}  \frac{1}{(4\pi \tau )^{\frac{3}{2}}} \exp(-\frac{r^2}{4\tau}) r^2 \sin\theta dr d\theta d\phi\right|\\
& \leq  \left|\int_{0}^{2\pi} \int_{0}^{\pi}\left. \left[ \frac{2\tau}{(4\pi \tau )^{\frac{3}{2}}} \exp(-\frac{r^2}{4\tau}) r\right]\right|_{w_b}^{+\infty} \sin\theta d\theta d\phi\right|+\left|\int_{0}^{2\pi} \int_{0}^{\pi} \int_{w_b}^{+\infty} \frac{2\tau}{(4\pi \tau )^{\frac{3}{2}}} \exp(-\frac{r^2}{4\tau}) dr \sin\theta d\theta d\phi\right|\\
& = T(\frac{w_b}{2\sqrt{\tau}}) \leq \varepsilon . 
\end{align*} 
\end{proof}
\end{thm}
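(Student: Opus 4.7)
The plan is to split the convolution against the heat kernel into a near-field piece (ball of radius $w_b$) and a far-field piece (its complement), observe that the difference in \eqref{eq:HeatKernelTwoPart} is exactly the far-field piece, and then bound it by explicit computation in spherical coordinates. That is, I would write
\[
\bigl( G_\tau^3 \ast \tilde A_{ij} \bigr)(x) - \iiint_{|x-y|\leq w_b} G_\tau^3(x-y) \tilde A_{ij}(y)\, dy
= \iiint_{|x-y|>w_b} G_\tau^3(x-y) \tilde A_{ij}(y)\, dy,
\]
so the task reduces to controlling the right-hand side.

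Next, I would use a pointwise bound $|\tilde A_{ij}(y)| \leq 1$. Since the iterates of Algorithm~\ref{a:MBO} lie in $O_n$ and the closest-point extension only copies values from $\Omega$, the extended initial datum $\tilde A(0,\cdot)$ has entries bounded by $1$; by Proposition~\ref{p:MaxPrinc} applied entrywise (or more directly by the scalar maximum principle for the heat equation on $\mathbb R^3$), this bound is preserved under free-space diffusion. Pulling $|\tilde A_{ij}|\le 1$ through the integral and translating $y \mapsto x - y$ then reduces the problem to showing
\[
\iiint_{|y|>w_b} G_\tau^3(y)\, dy \;=\; T\!\left(\tfrac{w_b}{2\sqrt{\tau}}\right).
\]

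To verify this identity, I would switch to spherical coordinates, integrate out the angular variables to get $\frac{4\pi}{(4\pi\tau)^{3/2}}\int_{w_b}^{\infty} r^2 e^{-r^2/(4\tau)}\, dr$, and then integrate by parts once with $u=r$, $dv = r\,e^{-r^2/(4\tau)}\,dr$, producing a boundary term proportional to $w_b\, e^{-w_b^2/(4\tau)}$ plus a remainder proportional to $\int_{w_b}^{\infty} e^{-r^2/(4\tau)} dr$. Finally, I would substitute $s = r/(2\sqrt{\tau})$ in the remaining integral to rewrite it in terms of the complementary error function $1 - \mathrm{erf}(w_b/(2\sqrt{\tau}))$, and collect constants to see that the two surviving pieces combine precisely into $\frac{2x}{\sqrt{\pi}} e^{-x^2} + (1-\mathrm{erf}(x))$ evaluated at $x = w_b/(2\sqrt{\tau})$, which is $T(w_b/(2\sqrt{\tau}))$. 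Invoking the hypothesis \eqref{est:w_b} then gives the claimed bound $\varepsilon$.

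The content is essentially elementary; the only step that requires care is the bookkeeping in the integration by parts and the subsequent change of variables, where the factors of $4\pi$, $(4\pi\tau)^{3/2}$, $2\tau$, and $2\sqrt{\tau}$ must all cancel exactly in order to produce the dimensionless function $T$. The remaining subtlety is justifying $|\tilde A_{ij}|\le 1$ on all of $\mathbb R^3$; for this I would argue via the scalar maximum principle for the heat equation applied to each entry of the extended field, using that the closest-point extension of a matrix in $O_n$ has entries bounded by $1$ uniformly.
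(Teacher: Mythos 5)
Your proposal is correct and follows essentially the same route as the paper: the identical near-field/far-field decomposition, the bound $|\tilde A_{ij}|\leq 1$ (used silently in the paper, since the extension copies entries of orthogonal matrices), and the same spherical-coordinates computation with one integration by parts producing the boundary term $\tfrac{2x}{\sqrt{\pi}}e^{-x^2}$ and the complementary-error-function term. If anything, your version is slightly cleaner, since you observe the tail integral equals $T\bigl(\tfrac{w_b}{2\sqrt{\tau}}\bigr)$ exactly, whereas the paper passes through an unnecessary triangle inequality.
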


In Table \ref{tab:Estimatebw}, we list the values of the band width, $w_b$, necessary in the approximation of the direct evaluation, $G_{\tau}^3*A_{ij}$, for various levels of precision $\varepsilon$ and time steps $\tau$. 
 
\begin{table}[t]
\begin{center}
\caption{Width of the band $w_b$ satisfying $T(\frac{w_b}{2\sqrt{\tau}}) = \varepsilon$ for different $\tau$ and $\varepsilon$. See Theorem \ref{thm:truncation_band}. \label{tab:Estimatebw}
}
\begin{tabular*}{0.6\textwidth}{@{\extracolsep{\fill}} | l|c|c|c|c|  }
\hline
\backslashbox{$\varepsilon$}{$\tau$}
&\makebox[3em]{$10^{-1}$ }&\makebox[3em]{$10^{-2}$}&\makebox[3em]{$10^{-3}$}
&\makebox[3em]{$10^{-4}$}\\
\hline
$10^{-3}$ &    1.796 &   0.5683 &  0.1796 & 0.05683\\ \hline
$10^{-6}$ &   2.474 &  0.7823 &  0.2474 &  0.07823\\ \hline
$10^{-9}$&   
   2.993  & 0.9465 &  0.2993 &  0.09465 \\ \hline
$10^{-12}$ &   3.432 &  1.085 &  0.3432 &  0.1085 \\ \hline
\end{tabular*}
\end{center}
\end{table}

\begin{rem} 	
Table \ref{tab:Estimatebw} indicates that $w_b$ increases as $\tau$ increases. That means if we choose large $\tau$ in the experiments,  $w_b$ is large which introduces large degrees of freedom for the same accuracy $\varepsilon$. However, in our experiments, the choice of $\tau$ can be relatively large at an acceptable degree of freedom achieving single precision accuracy. 
\end{rem}

Analogous to the case of a flat torus, we evaluate the convolution in \eqref{e:freeSpace} using the equality 
\begin{align}\label{e:fundamental_Fourier}
G_\tau^3*\tilde{A}_{ij}=\mathcal{F}^{-1}( \ \mathcal{F}(G_\tau^3) \ \mathcal{F}(\tilde{A}_{ij}) \  ) .
\end{align} 
We numerically evaluate \eqref{e:fundamental_Fourier}  as follows. 
We first select a relatively large box $B$ that contains the surface and then discretize the box with grid size $dx$ to obtain the set of grid points, $\mathcal{G} \subset B$. For each $x_g\in \mathcal{G}$, according to the parametrization of  
the surface, we find the distance  to the surface, $d_g = d(x_g,\Omega)$ and the corresponding closest point $x_g^c = \arg\min_{x \in \Omega} \ d(x,x_g)$ on the surface. Due to Theorem \ref{thm:truncation_band}, to approximately  evaluate \eqref{e:fundamental_Fourier},  we only need the value of $\tilde{A}_{ij}$ in a band of the surface with width $w_b$. Hence, we only keep  $x_g \in \mathcal{G}$ with $d_g<w_b$ denoted by 
\[\mathcal{G}^b = \{ x_g \in \mathcal{G} \colon d(x_g, \Omega) < w_b \}.\]
 To further improve the accuracy of the discretization of the integral in 
 equation \eqref{e:fundamental_Fourier}, for each $x_g \in \mathcal{G}^b $, we generate the $p$-th order scaled Gauss-Chebyshev quadrature points in the box  $x_g + [0,dx]^3$. 
 Let $\mathcal{Q}$ denote this set of these quadrature points. For each $x_q \in \mathcal{Q}$, we find the corresponding closest point, denoted  $x_{q}^c = \arg\min_{x \in \Omega} \ d(x,x_q)$.
 
An overview of our algorithm is  as follows, which we further detail below. 
(1) Initially, we have the value of $A_{ij}(x_{q}^c)$ on the surface. 
(2) We then constantly extend the value of $A_{ij}(x)$ to all  quadrature points in $\mathcal{Q}$ in the band of the surface, which we denote by $\tilde{A}_{ij}(x_q)$ for $x_q \in \mathcal{Q}$. 
(3) Using the value of $\tilde{A}_{ij}$ at $x_q$, we evaluate $\mathcal{F}(\tilde{A}_{ij})$. 
(4) Multiplying by $\mathcal{F}(G_\tau^3)$, we then use the inverse Fourier transform to evaluate the solution at $x_{q}^c$ on the surface. 

To evaluate the Fourier transform, it is too expensive to use the discrete Fourier transform (DFT). Also, because both the quadrature points $x_{q} \in \mathcal{Q}$ and closest points $x_{q}^c$ on the sureface are  non-uniform, it is not possible to simply use the fast Fourier transform (FFT) to evaluate the Fourier transform. In our setting, we need to evaluate the Fourier transform from non-uniform points in the physical domain to uniform points in the spectral domain and the inverse Fourier transform from uniform points in the spectral domain to non-uniform points in the physical domain. These two tasks can be efficiently accomplished using the type-1 non-uniform fast Fourier transform (NUFFT) and type-2 NUFFT, as described in  \cite{nufft0}.
The type-1 NUFFT evaluates sums of the form
\begin{equation}\label{4.1}
f(k)=\frac{1}{N}\sum_{j=1}^{N}c_j e^{\pm i k\cdot x_j},
\end{equation}
for ``targets" $k$ on a regular (equispaced) grid in $\mathbb{R}^d$
($d=3$ for our case), given function values $c_j$ prescribed at non-uniform 
points $x_j$ in physical space.
Here, $N$ denotes the total number of source points. The type-2 NUFFT evaluates sums of the form
\begin{equation}\label{4.2}
  F(x_n)=\sum_{m_1=-M_1}^{M_1-1}\cdots \sum_{m_d=-M_d}^{M_d-1}f(m)
  e^{\pm im \cdot x_n},
\end{equation}
where the ``targets" $ x_n$ are non-uniform points in $\mathbb{R}^d$ and the function $f$ is evaluated on a regular grid in the spectral domain. So, we use type-1 NUFFT to evaluate the Fourier transform and type-2 NUFFT to evaluate the inverse Fourier transform.

We recall the Fourier spectral approximation of the one-dimensional heat kernel for a fixed time has error which is characterized by the following Theorem. 
\begin{thm}  \cite[Theorem 1, Fourier spectral approximation of the one-dimensional heat kernel.]{jiang2016nufft}
\label{thm:approximation_heat_kernel}
Let $\varepsilon < 1/2$ be the prescribed accuracy,  
$\tau> 0$ be the time parameter for the heat kernel, 
and $R>0$ be a spatial radius. 
Define $h=\min\left(\frac{\pi}{R},\frac{\pi}{2\sqrt{\tau|\ln\varepsilon|}}\right)$ and 
$M = \frac{1}{h}\sqrt{\frac{\ln(\sqrt{\pi}\varepsilon/2h\sqrt{\tau})}{\tau}}$.
  Then for all $|x|\leq R$,
  \begin{equation}\label{2.7}
  \left|G_\tau(x)- \frac{h}{2\pi}\sum_{m=-M}^{M-1} e^{-m^2h^2 \tau+imhx}\right| \leq \frac{4\varepsilon}{\sqrt{4\pi\tau}}.
  \end{equation}
\end{thm}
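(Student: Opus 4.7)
The plan is to approximate the 1D heat kernel $G_\tau(x) = (4\pi\tau)^{-1/2}\exp(-x^2/(4\tau))$ by truncating and discretizing its inverse Fourier representation
\[
G_\tau(x) \;=\; \frac{1}{2\pi}\int_{-\infty}^{\infty} e^{-\omega^2\tau}\,e^{i\omega x}\,d\omega,
\]
since $\widehat{G_\tau}(\omega) = e^{-\omega^2 \tau}$. The discrete sum in the theorem is exactly the Riemann sum for this integral with spacing $h$, truncated symmetrically at $\pm Mh$, so the error splits naturally into an \emph{aliasing} error (from replacing the integral by the infinite sum) and a \emph{tail} error (from truncating the infinite sum at $M$). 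I would estimate each separately and then choose $h$ and $M$ to balance them against the target $\tfrac{4\varepsilon}{\sqrt{4\pi\tau}}$.

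First, the aliasing estimate. By Poisson summation applied to $f(\omega)=e^{-\omega^2\tau}e^{i\omega x}$,
\[
\frac{h}{2\pi}\sum_{m=-\infty}^{\infty} e^{-m^2h^2\tau}\,e^{imhx} \;=\; \sum_{n=-\infty}^{\infty} G_\tau\!\left(x + \tfrac{2\pi n}{h}\right).
\]
The $n=0$ term is exactly $G_\tau(x)$, so the error is $\sum_{n\neq 0} G_\tau(x+2\pi n/h)$. If $h\le \pi/R$, then $2\pi/h\ge 2R$, and for $|x|\le R$ every shifted argument satisfies $|x+2\pi n/h|\ge R(2|n|-1)$; summing the resulting Gaussians gives a bound of the form $C(4\pi\tau)^{-1/2}\exp(-R^2/(4\tau))$, which is controllable through the standing constraint $h \leq \pi/R$ implicit in the definition of $h$.

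Second, the tail estimate for the truncation from $\pm\infty$ to $\{-M,\dots,M-1\}$:
\[
\left|\frac{h}{2\pi}\sum_{|m|\ge M} e^{-m^2h^2\tau}\,e^{imhx}\right| \;\le\; \frac{h}{\pi}\sum_{m\ge M} e^{-m^2h^2\tau}\;\le\;\frac{h}{\pi}\int_{(M-1)h}^{\infty} e^{-\omega^2\tau}\,d\omega \;\le\; \frac{1}{2\pi(M-1)\tau}\,e^{-(M-1)^2 h^2\tau},
\]
using the standard Mills-type bound $\int_A^\infty e^{-\omega^2\tau}d\omega \le \tfrac{1}{2A\tau}e^{-A^2\tau}$. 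The prescribed choice $M = h^{-1}\sqrt{\ln(\sqrt{\pi}\varepsilon/(2h\sqrt{\tau}))/\tau}$ makes $e^{-M^2 h^2 \tau}$ exactly proportional to $\varepsilon/(h\sqrt{\tau})$, so this tail collapses to a constant multiple of $\varepsilon/\sqrt{4\pi\tau}$.

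The main obstacle, and where I would need to be careful, is bookkeeping the constants so that the sum of the aliasing and truncation errors lands inside the clean bound $4\varepsilon/\sqrt{4\pi\tau}$ simultaneously for \emph{all} $|x|\le R$. Concretely: the second branch in $h = \min(\pi/R,\; \pi/(2\sqrt{\tau|\ln\varepsilon|}))$ is what forces $M\ge 1$ and keeps the argument of the logarithm in the definition of $M$ positive; one must verify this and then distribute the allowance $4\varepsilon$ across (say) $2\varepsilon$ for aliasing and $2\varepsilon$ for truncation. Since the result is quoted verbatim from \cite{jiang2016nufft}, I would present the derivation at the level of these two estimates plus the balancing argument, defer the tightest constants to that reference, and verify directly that the stated $h,M$ satisfy the resulting inequalities.
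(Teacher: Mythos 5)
First, note that the paper does not prove this statement at all: it is quoted verbatim, with citation, from \cite{jiang2016nufft}, so there is no internal proof to compare against. Your overall strategy --- write $G_\tau(x)=\frac{1}{2\pi}\int e^{-\omega^2\tau}e^{i\omega x}d\omega$, use Poisson summation to identify the discretization error with the aliasing sum $\sum_{n\neq 0}G_\tau(x+2\pi n/h)$, bound the truncation of the modes $|m|\geq M$ by a Gaussian tail, and balance the two --- is the standard route and is the right skeleton for this result.

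However, there is a genuine gap in your aliasing estimate. You bound the aliasing error by $C(4\pi\tau)^{-1/2}\exp(-R^2/(4\tau))$ and claim it is ``controllable through the standing constraint $h\leq\pi/R$.'' That fails whenever $R$ is small compared with $2\sqrt{\tau|\ln\varepsilon|}$: in that regime $\exp(-R^2/(4\tau))$ can be arbitrarily close to $1$, and the constraint $h\leq \pi/R$ alone gives no smallness at all. The mechanism that actually saves the estimate is the \emph{second} branch of $h=\min\left(\frac{\pi}{R},\frac{\pi}{2\sqrt{\tau|\ln\varepsilon|}}\right)$: since $h\leq \pi/R$ and $h\leq \pi/(2\sqrt{\tau|\ln\varepsilon|})$, every image satisfies $|x+2\pi n/h|\geq (2|n|-1)\pi/h\geq (2|n|-1)\,2\sqrt{\tau|\ln\varepsilon|}$, so each aliasing term is at most $(4\pi\tau)^{-1/2}\varepsilon^{(2|n|-1)^2}$ and the whole sum is $\lesssim 2\varepsilon/\sqrt{4\pi\tau}$ for $\varepsilon<1/2$. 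You instead attribute that branch of the minimum only to ``forcing $M\geq 1$ and keeping the log positive,'' which misidentifies its role and leaves the small-$R$ case uncovered. A secondary, smaller issue: with $M$ as stated, $M^2h^2\tau=\ln\bigl(\sqrt{\pi}\varepsilon/(2h\sqrt{\tau})\bigr)$ gives $e^{-M^2h^2\tau}=2h\sqrt{\tau}/(\sqrt{\pi}\varepsilon)$, not a quantity ``exactly proportional to $\varepsilon/(h\sqrt{\tau})$'' as you assert, so your claim that the tail ``collapses'' to $O(\varepsilon/\sqrt{4\pi\tau})$ is not verified as written; the clean way to close this is to check that the stated $M$ (or $M-1$) satisfies $(M-1)^2h^2\tau\geq|\ln\varepsilon|$, which together with $\frac{1}{\pi}\int_{(M-1)h}^{\infty}e^{-\omega^2\tau}d\omega\leq \frac{1}{\sqrt{4\pi\tau}}e^{-(M-1)^2h^2\tau}$ yields a truncation error $\leq \varepsilon/\sqrt{4\pi\tau}$ and, combined with the corrected aliasing bound, lands within the stated $4\varepsilon/\sqrt{4\pi\tau}$.
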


\begin{table}[t!]
\begin{center}
\caption{Number of Fourier modes, $M$, needed to approximate the one-dimensional heat kernel
$G_\tau^1(x)$ for $x\in [-R,R]$ with $R = \pi$; see Theorem \ref{thm:approximation_heat_kernel}. \label{tab:Fourier_mode}}
\begin{tabular*}{0.6\textwidth}{@{\extracolsep{\fill}} | l|c|c|c|c|  }
\hline
\backslashbox{$\varepsilon$}{$\tau$}
&\makebox[3em]{$10^{-1}$ }&\makebox[3em]{$10^{-2}$}&\makebox[3em]{$10^{-3}$}
&\makebox[3em]{$10^{-4}$}\\
\hline
$10^{-3}$ &    8 &   21 &  55  & 136\\ \hline
$10^{-6}$ &   11 &  34 &  100 &  296\\ \hline
$10^{-9}$&   14  & 43 &  130 &  396 \\ \hline
$10^{-12}$ &   17 &   50 &  154 &  475 \\ \hline
\end{tabular*}
\end{center}
\end{table}

To illustrate Theorem \ref{thm:approximation_heat_kernel} and provide intuition for how many Fourier modes are  needed in practice, we list the values of $M$ in Table \ref{tab:Fourier_mode} for various levels of precision $\varepsilon$ and time step $\tau$ for the approximation of $G_\tau^1(x)$ for $|x|\leq R$ with $R = \pi$.

In \eqref{e:HeatKer}, since $G^d_\tau( x)=\prod_{i=1}^d G^1_\tau(x_i)$, we simply use the tensor product to evaluate the Fourier spectral approximation of the heat kernel in higher dimensions. That is,
\begin{equation}\label{eq:approximation_heat_kernel}
G_\tau^d(x)  \ \approx  \ 
\frac{h^d}{(2\pi)^d}\prod_{i=1}^d\left(
\sum_{m_i=-M}^{M-1}
  e^{ -m_i^2 h^2 \tau + i h m_i  x_i }\right), 
  \qquad \qquad x \in \mathbb{R}^d.
\end{equation}

We use the type-1 NUFFT \eqref{4.1} to evaluate the Fourier transform of $\tilde{A}_{ij}$ from $x_q \in \mathcal{Q}$ to a uniform spectral domain to have $\hat{A}_{ij}(m)$ where $m \in \{-Mh,(-M+1)h, \cdots, (M-2)h, (M-1)h\}^3 $. Here, the integer $M$ and $h$ are computed by Theorem~\ref{thm:approximation_heat_kernel} when $\varepsilon$ and $\tau$ are given (see also Table~\ref{tab:Fourier_mode}). 
Multiplying $\hat{A}_{ij}(m)$ by $n_q e^{-|m|^2h^2\tau}$ where $n_q$ is the number of quadrature points, we then apply the type-2 NUFFT \eqref{4.2} to evaluate the inverse Fourier transform of the product from $m$ to $x_{q}^c$.

Our algorithm for solving the surface diffusion equation \eqref{eq:surface_diffusion} is summarized in Algorithm \ref{a:nufft_surface}.

\begin{algorithm}[t!]
\DontPrintSemicolon
 \KwIn{Let $\Omega$ be a closed surface in $\mathbb{R}^3$, $u_0: \Omega \rightarrow \mathbb{R}$, $\tau > 0$, a grid size $dx>0$, and accuracy $\varepsilon >0$.}
 \KwOut{ A matrix-valued function $A \colon \Omega \rightarrow \mathbb{R}^{n\times n}$ that approximately solves the surface diffusion equation \eqref{eq:surface_diffusion} at time $\tau$. }
 
{\bf 1.} Compute $w_b$ via \eqref{est:w_b}. Select a box $B$ containing a $w_b$ neighborhood of $\Omega$ and construct a uniform grid with the grid size $dx$ in $B$. For each grid point $x_g$, find the corresponding closest point on the surface and compute the distance $d_g$ to the surface. Denote the set of grid points $x_g$ with $d_g<w_b$ as $\mathcal{G}^b$. For each $x_g\in \mathcal{G}^b$, generate the $p-$th order quadrature points $x_q$ ({\it e.g.}, Gauss-Chebyshev quadrature points) in  $x_g + [0,dx]^3$ and find the corresponding closest points $x_{q}^c$ on the surface. Denote the number of quadrature points as $n_q$. \;

{\bf 2.} Extend $A(x)$ from the surface to the quadrature points using the closest point
function, i.e., $\tilde{A}_{ij}(x_q) = A_{ij}(x_{q}^c)$. \;

{\bf 3.} For accuracy $\varepsilon >0$, define  $h$ and $M$ as in Theorem \ref{thm:approximation_heat_kernel}. Generate grid points in the spectral domain, $m \in \{-Mh,(-M+1)h, \cdots, (M-2)h, (M-1)h\}^3$. \;

{\bf 4.} Apply the type-1 NUFFT \eqref{4.1} to approximately compute the Fourier transform of $\tilde{A}_{ij}$ to have $\hat{A}_{ij}(m)$. Compute $\bar{A}_{ij}(m) = n_qe^{-|m|^2dt}\hat{A}_{ij}(m)$. \;

{\bf 5.} Apply the type-2 NUFFT \eqref{4.2} to approximately compute the inverse Fourier transform of $\bar{A}_{ij}$ at the closest points $x_{q}^c \in \Omega$ to obtain $A^{*}_{ij}$. The approximate solution to the surface diffusion equation \eqref{eq:surface_diffusion} at time $\tau$ is then given by 
$A_{ij}(x_{qc},\tau) = (2\pi)^{-3} h^{-3} A^{*}_{ij}$.
\caption{An NUFFT based solver for the surface diffusion equation \eqref{eq:surface_diffusion}. } 
\label{a:nufft_surface}
\end{algorithm}

\section{Computational Experiments} \label{s:CompExp}
In this section, we illustrate the performance of the algorithms described in Section \ref{s:Imp} via several numerical examples. 
We implemented the algorithms in MATLAB. 
For the closed surface computations described in Sections \ref{s:Sphere}, \ref{s:Peanut}, and \ref{s:VolConst}, 
we used the NUFFT library from \cite{nufftlib} and CPM library from \cite{ruuth2008simple}. 
All reported results were obtained on a laptop with a 2.7GHz Intel Core i5 processor and 8GB of RAM.  

\subsection{Flat torus} \label{s:FT} We consider the flat torus, $\Omega = [-\frac{1}{2},\frac{1}{2}]^2$, and $n=2$. We use $1024 \times 1024$ grid points to discretize $\Omega$.  We study various initial conditions in the following three examples.

\medskip

In the first example, we set the initial condition to be 
\begin{align*}
A(r,\theta)  = 
\begin{cases}
\begin{bmatrix}
\cos \alpha & -\sin \alpha \\
\sin \alpha & \cos \alpha
\end{bmatrix},   & \text{if} \  r < 0.3+0.06 \sin(6 \theta) \\
\begin{bmatrix}
\cos \alpha & \sin \alpha \\
\sin \alpha & -\cos \alpha
\end{bmatrix} & \text{otherwise} 
\end{cases},
\end{align*}
where $\alpha = \alpha(x,y) = \dfrac{\pi}{2}\sin(2\pi(x+y))$ and $(r, \theta)$ is the corresponding polar coordinate of $(x,y)$. 
This initial condition is plotted in the top left panel of Figure~\ref{fig:flat_torus_dynamic_shrinking}. 
In Figure \ref{fig:flat_torus_dynamic_shrinking}, and also in Figures \ref{fig:flat_torus_dynamic_para}--\ref{fig:sphere_volume_preserving}, the domain is colored by the sign of the determinant of the matrix. For a matrix field $A \in H^1\left(\Omega, O_n \right)$, we use the convention 
\begin{center}
\begin{tabular}{l c l c l }
$x$ is yellow &  $\iff$  &  $\textrm{det}(A(x)) =1$ & $\iff$ & $A(x) \in SO(n)$ \\
$x$ is blue  & $\iff$  & $\textrm{det}(A(x)) =-1$ & $\iff$ & $A(x) \in SO^-(n)$, 
\end{tabular}
\end{center}
The vector field in the figure is generated by the first column vector in $A$, initially given by $\begin{pmatrix} \cos \alpha \\ \sin \alpha \end{pmatrix}$.
We set $\tau = 8dx = 0.0078125$. Figure~\ref{fig:flat_torus_dynamic_shrinking} displays various snapshots of the time dynamics for this evolution. We observe that for this initial field with an star-shaped line defect, the region where $A \in SO(n)$ shrinks with the interface becoming a circle before the region where $A\in SO(n)$ vanishes.  The field converges to a uniform matrix field, which is easily seen to be a minimum of \eqref{eq:GL}.

\medskip

\begin{figure}[ht]
\centering
\includegraphics[scale=0.18,clip,trim= 7cm 1cm 7cm 1cm]{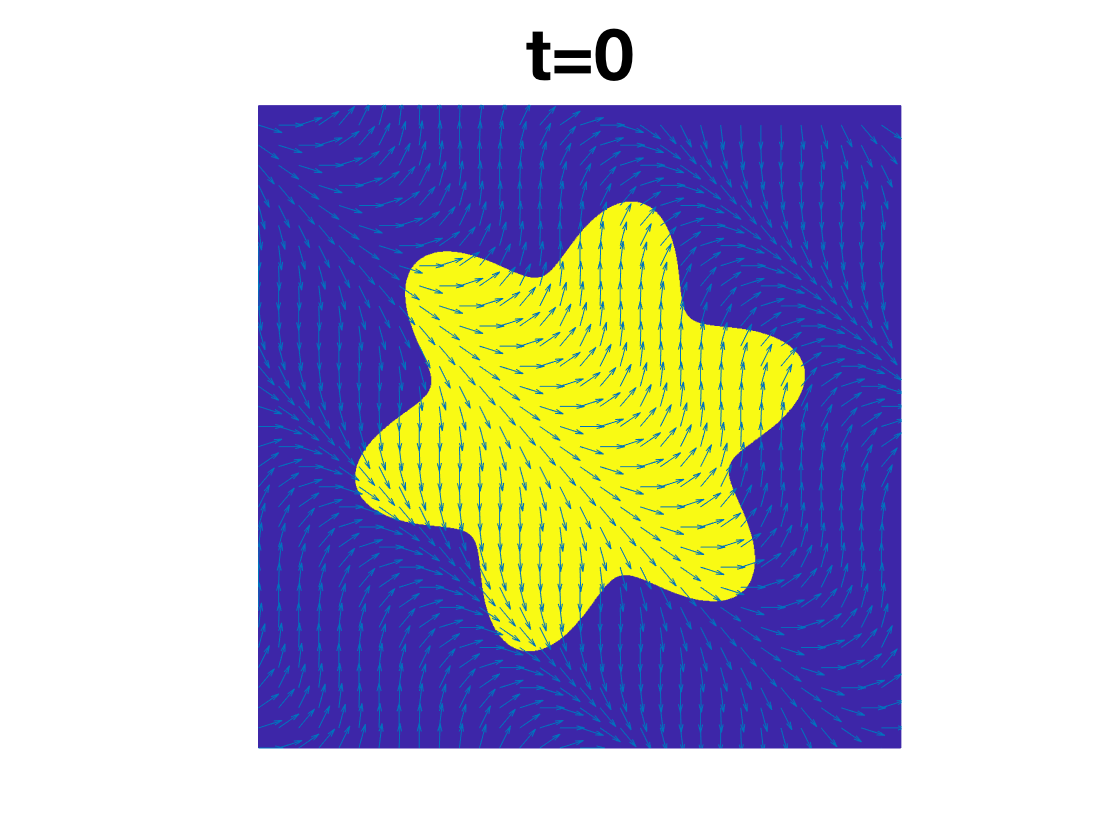}
\includegraphics[scale=0.18,clip,trim= 7cm 1cm 7cm 1cm]{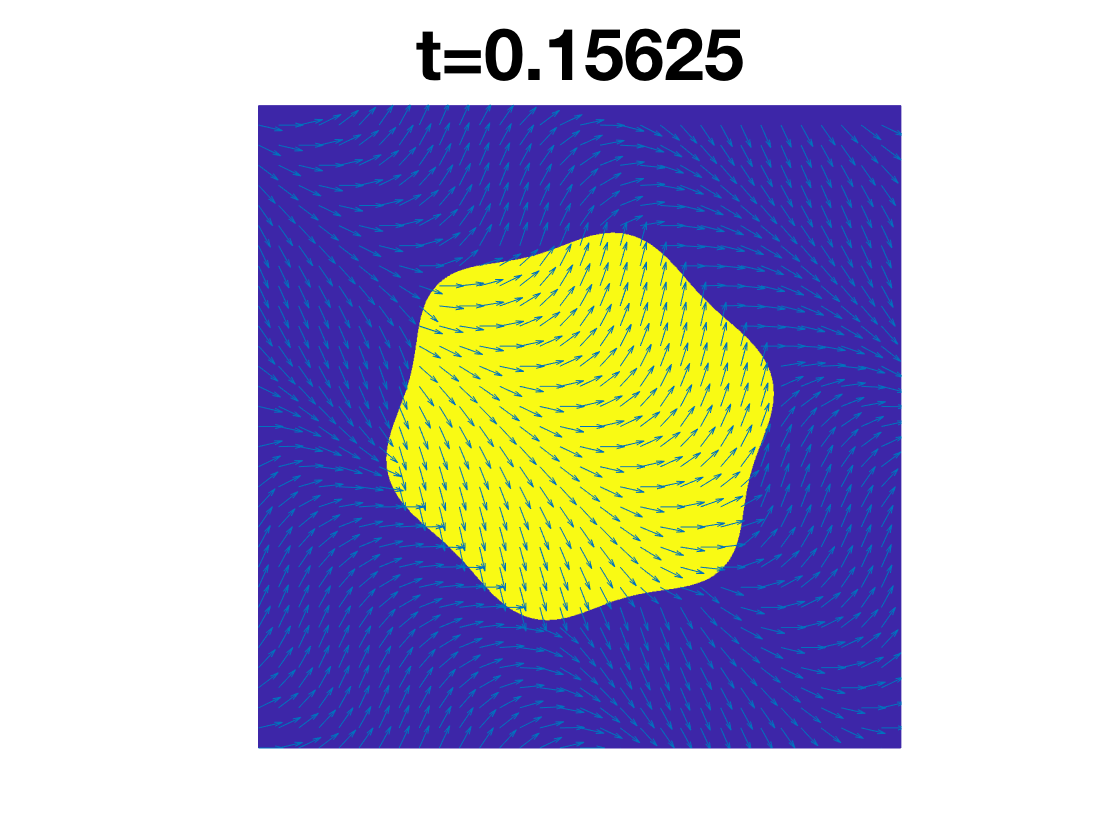}
\includegraphics[scale=0.18,clip,trim= 7cm 1cm 7cm 1cm]{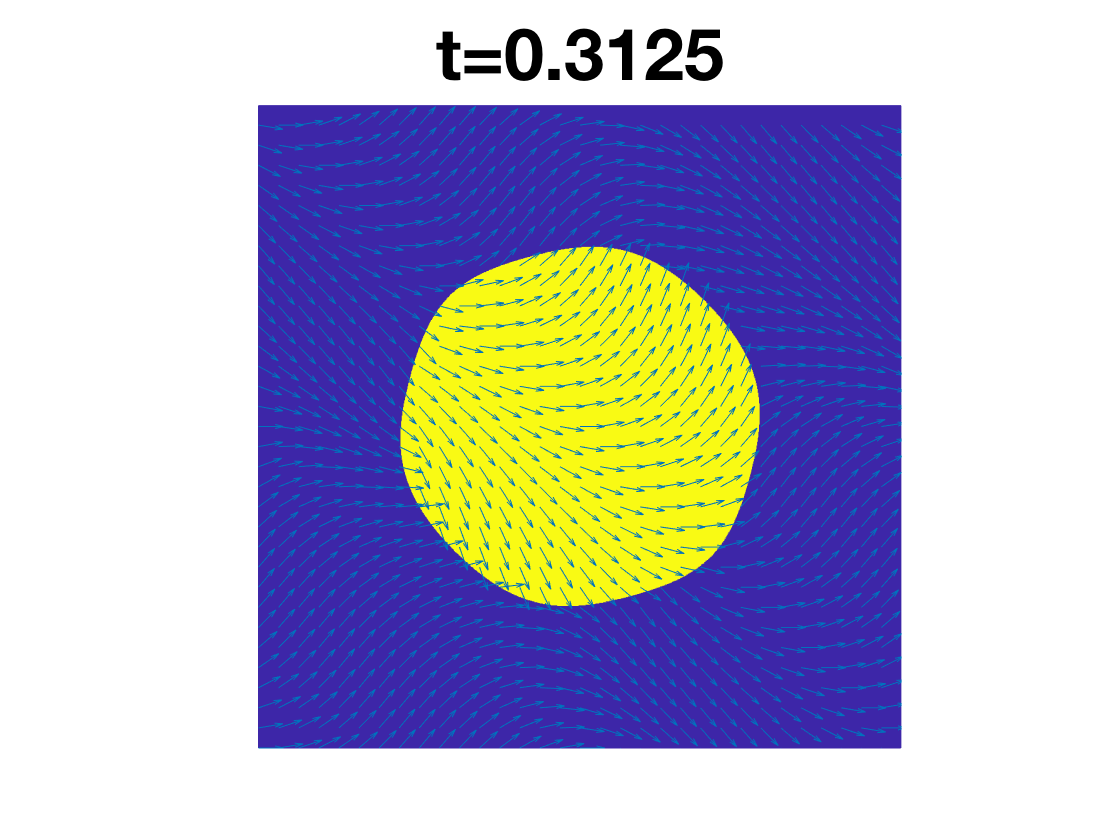}
\includegraphics[scale=0.18,clip,trim= 7cm 1cm 7cm 1cm]{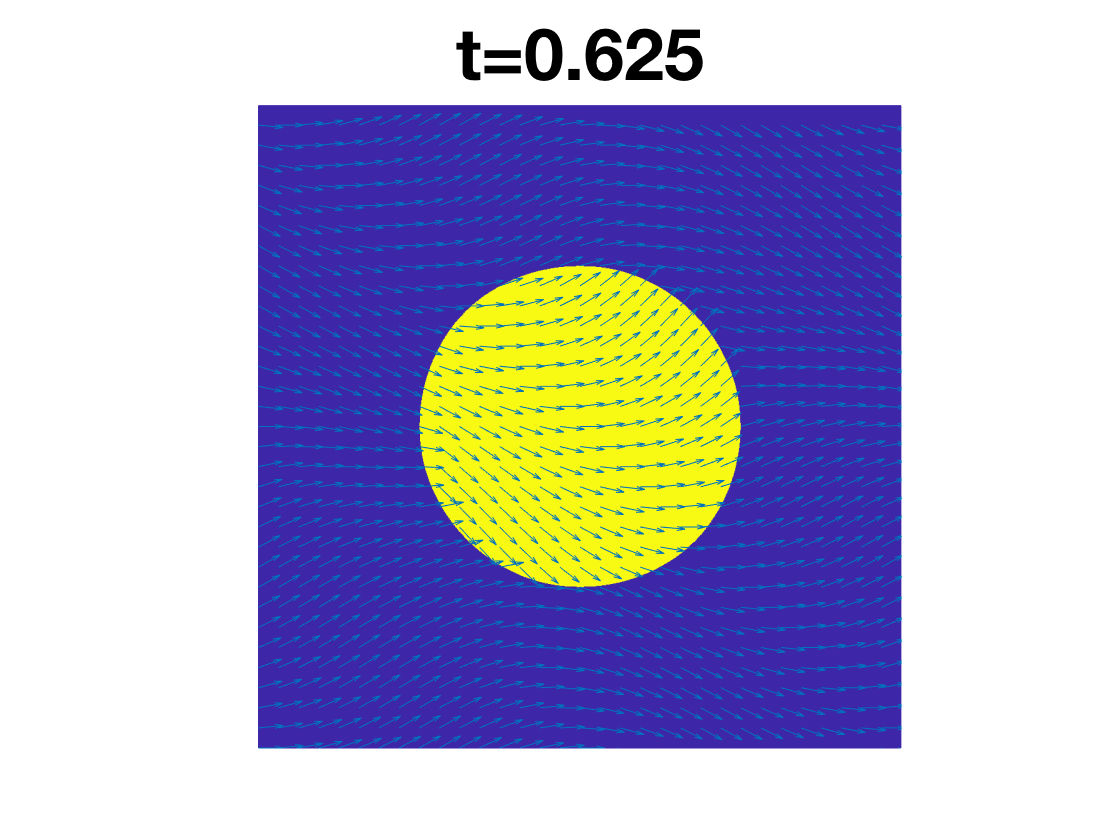}
\includegraphics[scale=0.18,clip,trim= 7cm 1cm 7cm 1cm]{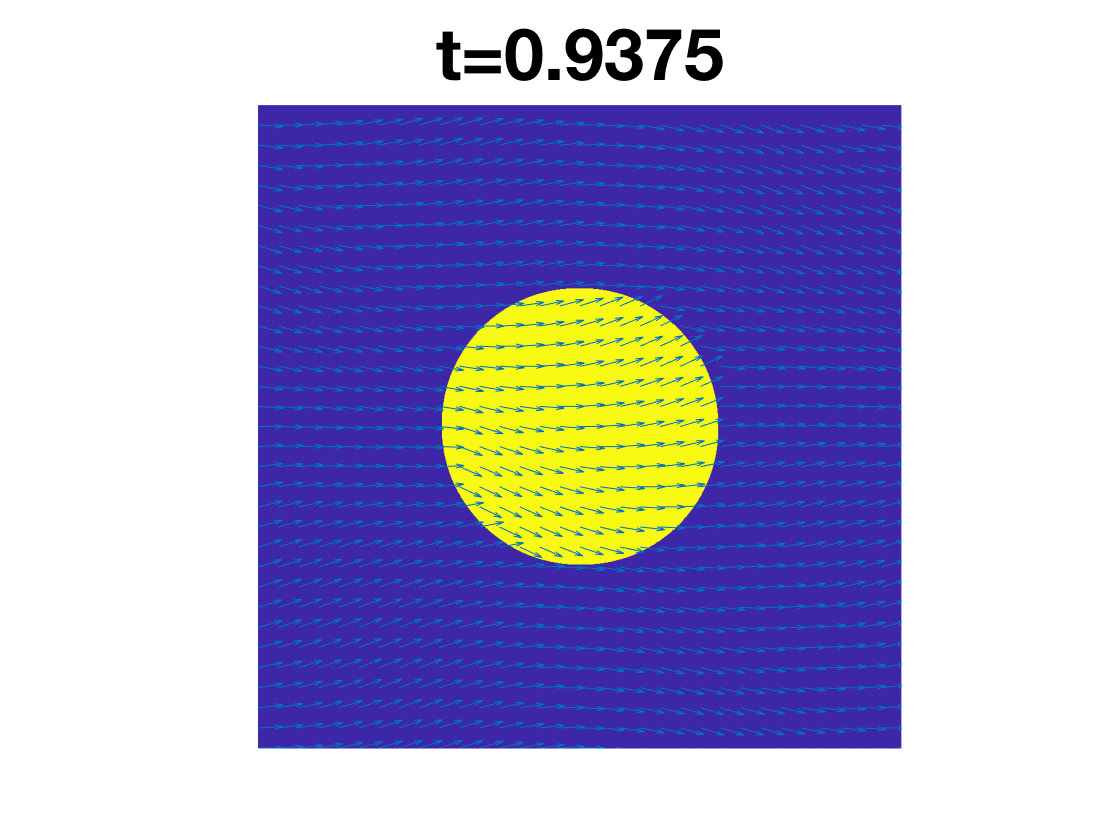}
\includegraphics[scale=0.18,clip,trim= 7cm 1cm 7cm 1cm]{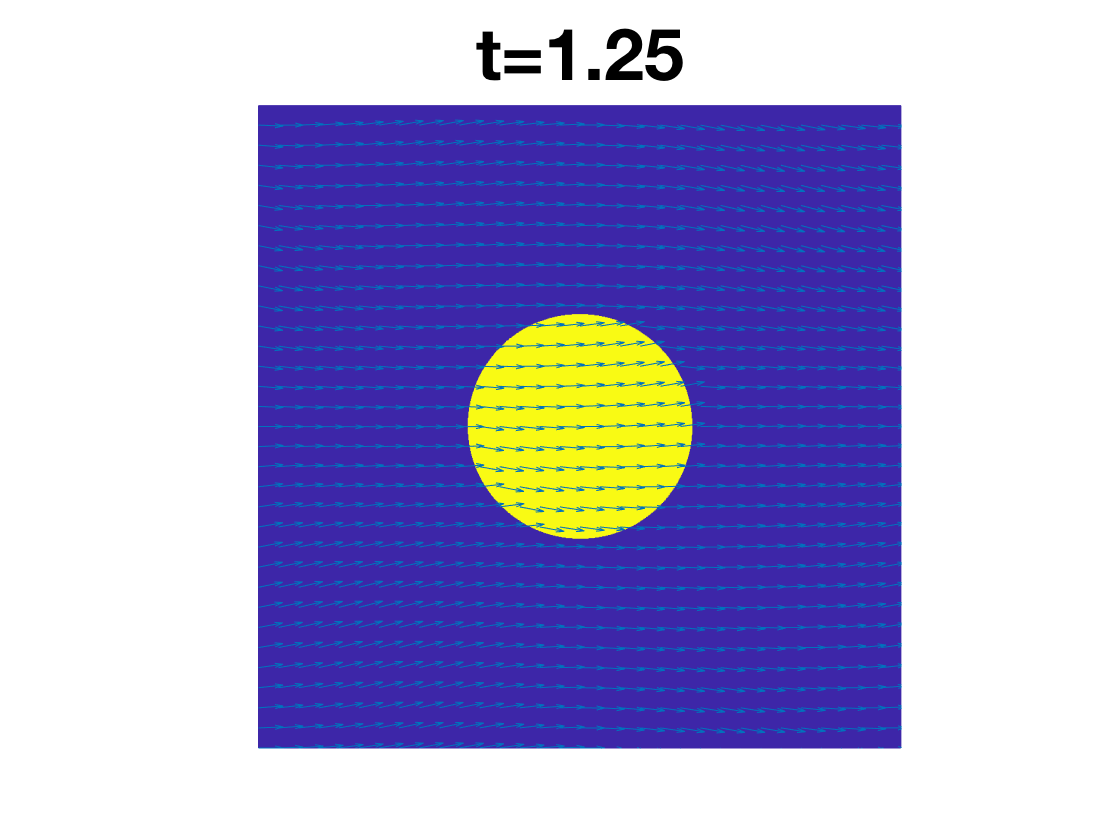}
\includegraphics[scale=0.18,clip,trim= 7cm 1cm 7cm 1cm]{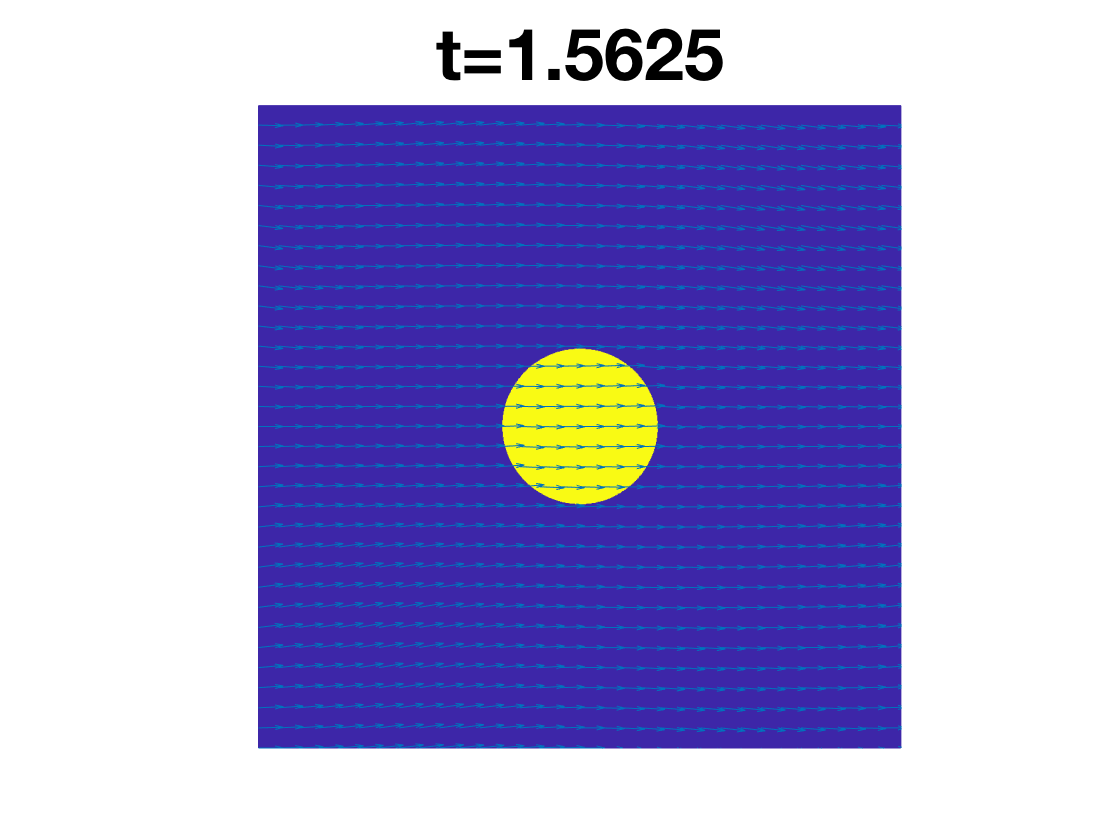}
\includegraphics[scale=0.18,clip,trim= 7cm 1cm 7cm 1cm]{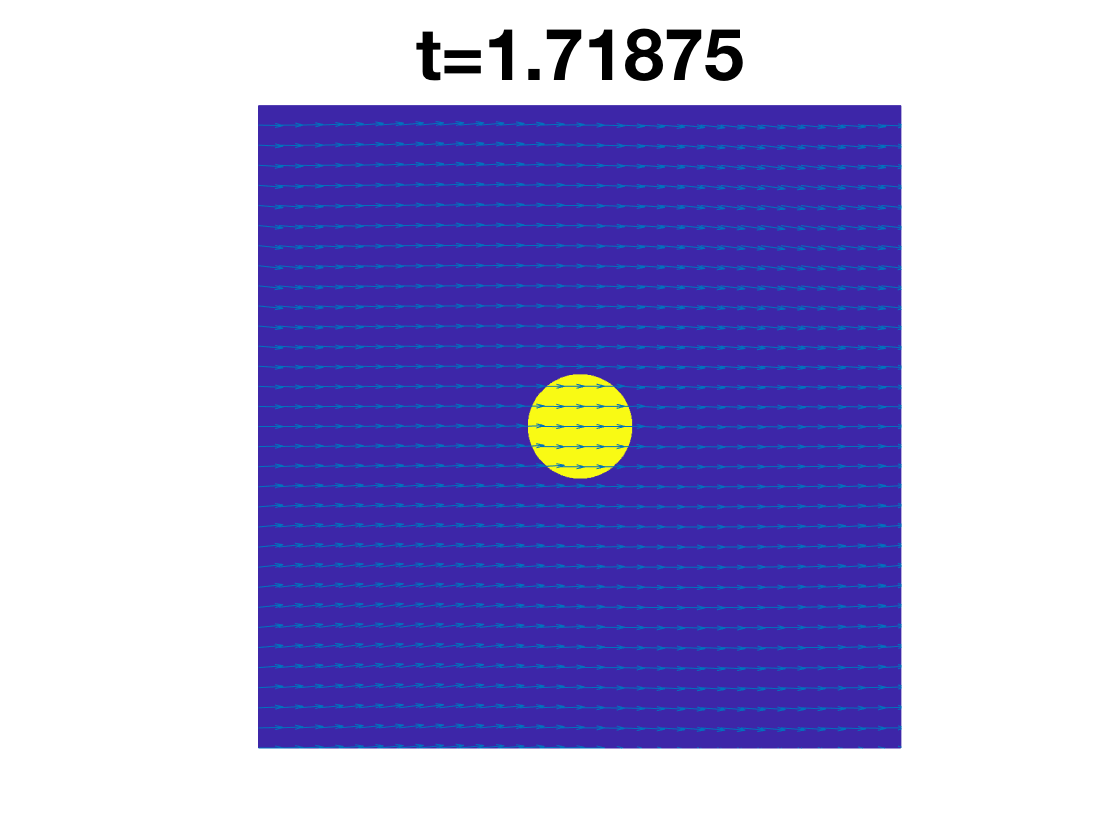}
\includegraphics[scale=0.18,clip,trim= 7cm 1cm 7cm 1cm]{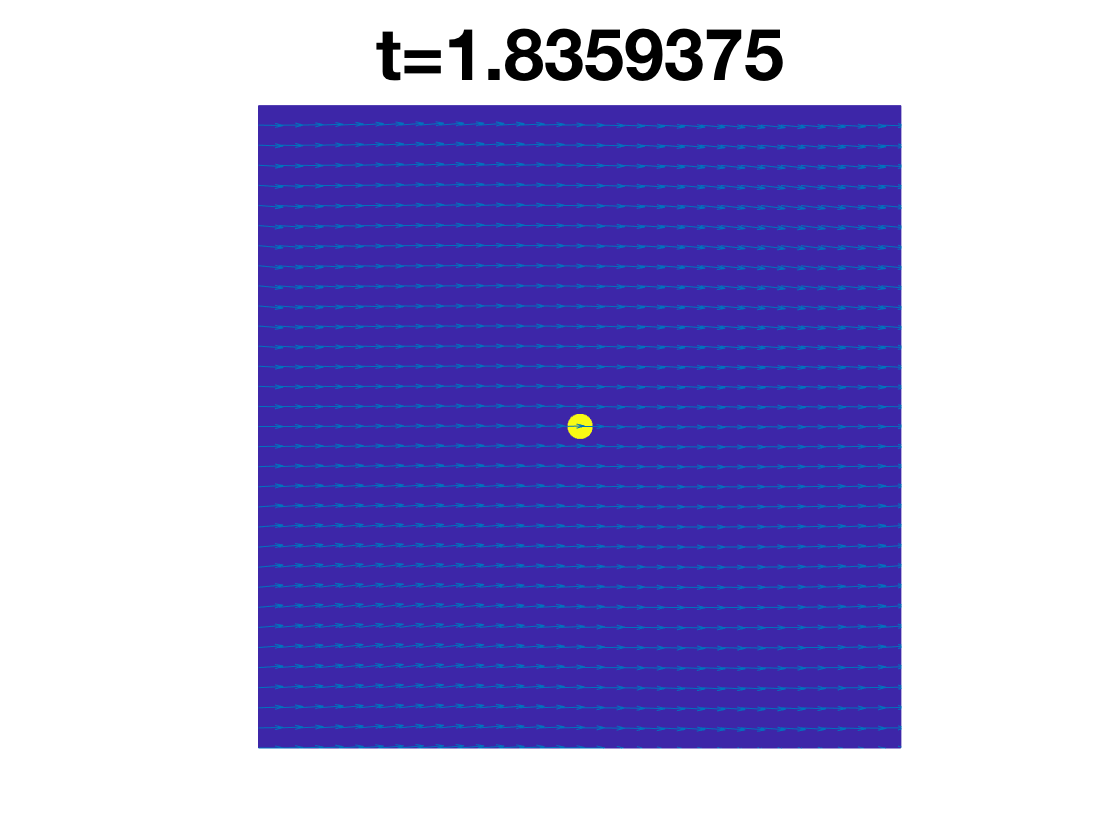}
\caption{Snapshots of the time dynamics of a closed line defect on the flat torus. See \S \ref{s:FT}.}\label{fig:flat_torus_dynamic_shrinking}
\end{figure}

In the second example, we take the initial condition to be 
\begin{align*}
A(x,y) = 
\begin{cases}
\begin{bmatrix}
\cos \alpha & -\sin \alpha \\
\sin \alpha & \cos \alpha
\end{bmatrix},   & \text{if}  \  x>0.25|\sin(2.5\pi y)|+0.2 \  \  \text{or}  \ \  x<-0.25|\sin(2.5\pi y)|-0.2 , \\
\begin{bmatrix}
\cos \alpha & \sin \alpha \\
\sin \alpha & -\cos \alpha
\end{bmatrix} & \text{otherwise} 
\end{cases}, 
\end{align*}
where $\alpha = \alpha(x,y) = \pi \sin(2\pi y)$  and  $\tau = 64dx = 0.0625$, where $dx$ is the grid size. 
This initial condition is plotted in the top left panel of Figure~\ref{fig:flat_torus_dynamic_para}. 

Figure~\ref{fig:flat_torus_dynamic_para} displays snapshots of the time evolution of the field. The  two initial line defects converge to two straight lines,  parallel to the $y-$axis. The fields on both sides of the defect are constant and aligned. {\it i.e.}, $\alpha$ is uniform.  This is a local minimum of the energy in \eqref{eq:GL}.  

\begin{figure}[ht]
\includegraphics[scale=0.18,clip,trim= 7cm 1cm 7cm 1cm]{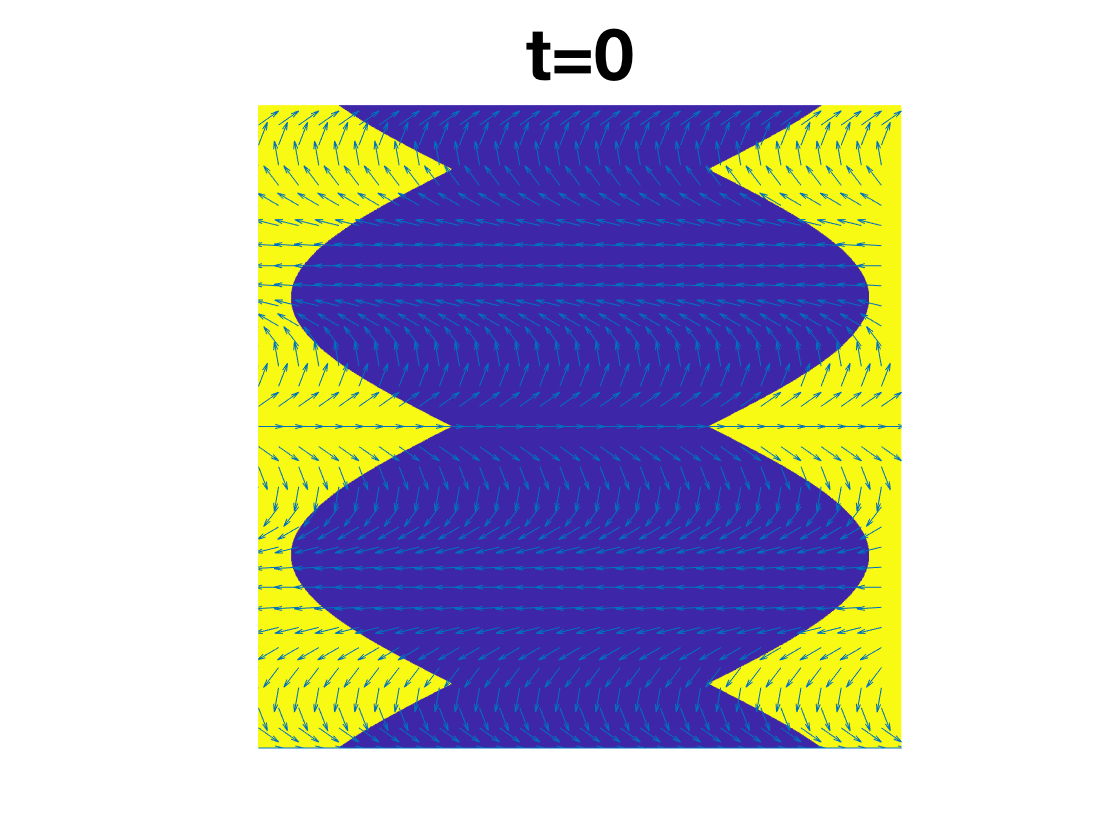}
\includegraphics[scale=0.18,clip,trim= 7cm 1cm 7cm 1cm]{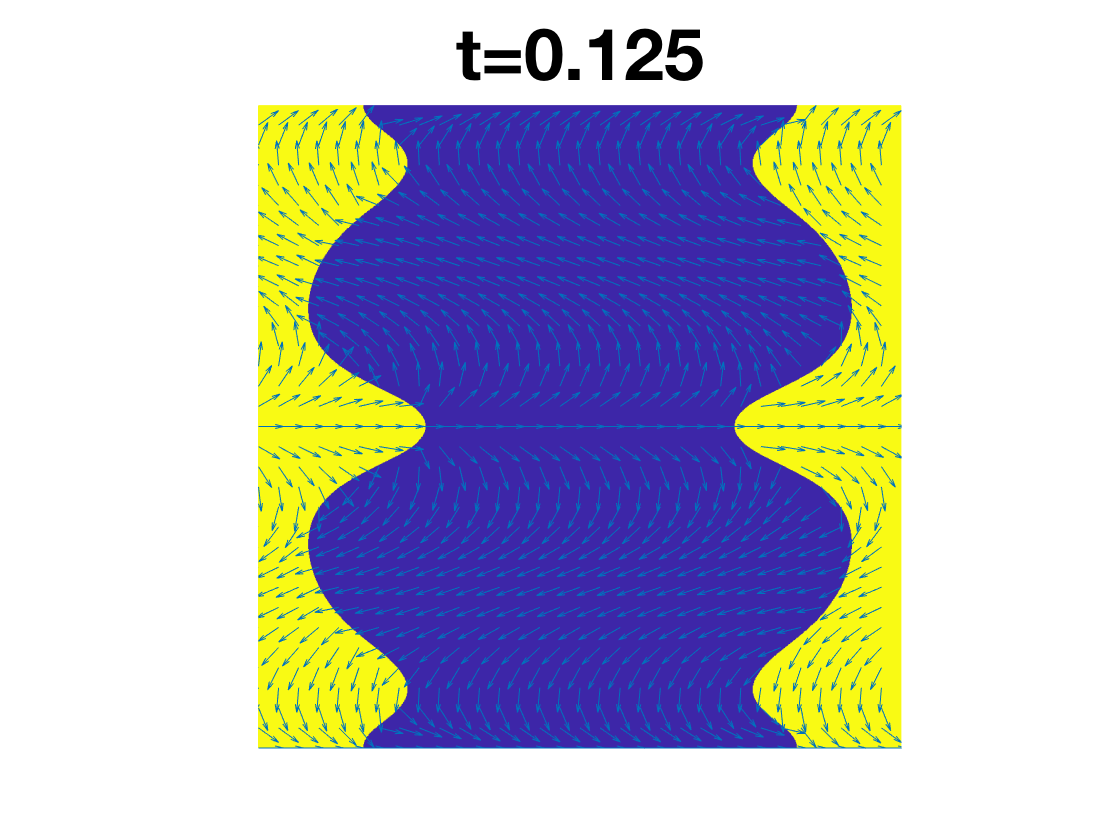}
\includegraphics[scale=0.18,clip,trim= 7cm 1cm 7cm 1cm]{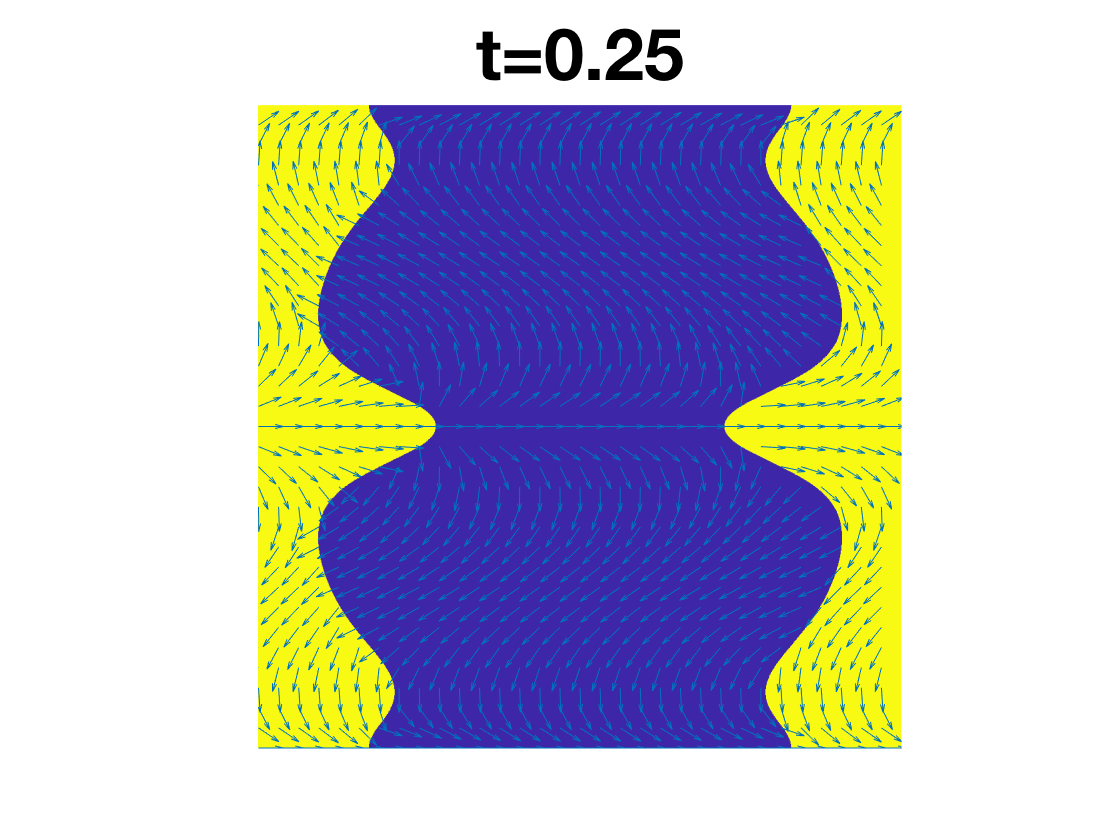}
\includegraphics[scale=0.18,clip,trim= 7cm 1cm 7cm 1cm]{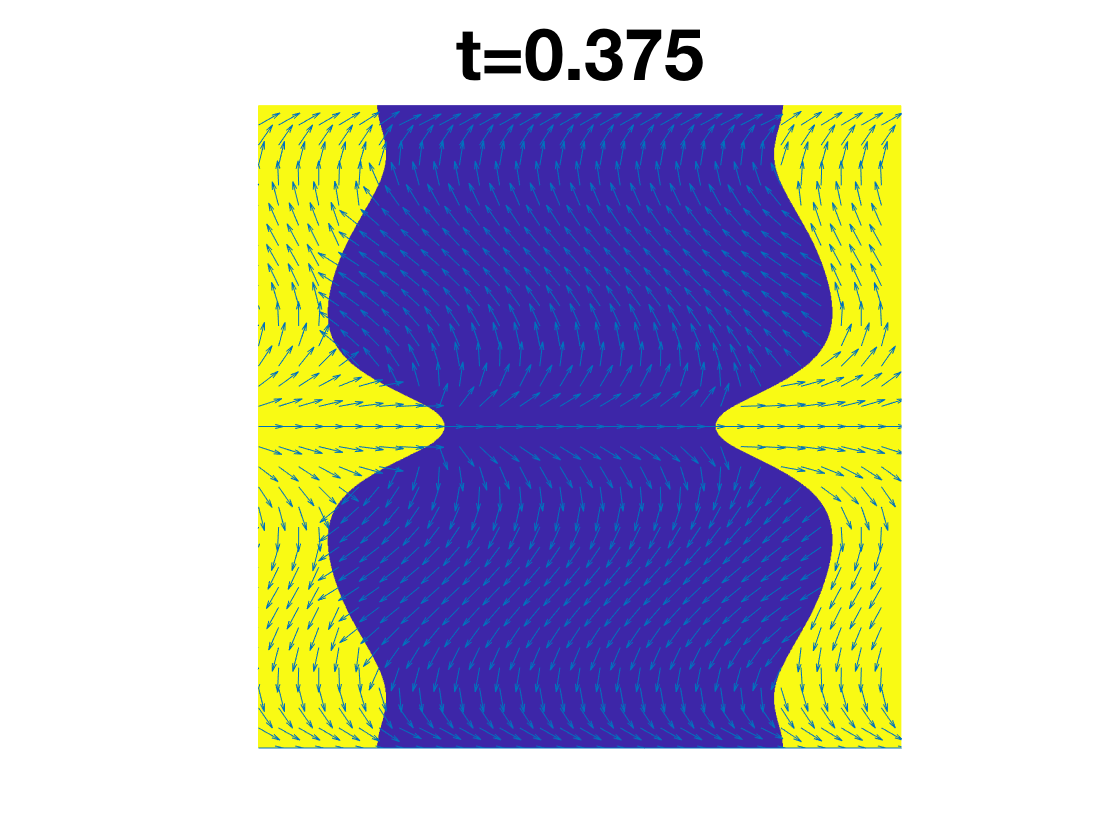}
\includegraphics[scale=0.18,clip,trim= 7cm 1cm 7cm 1cm]{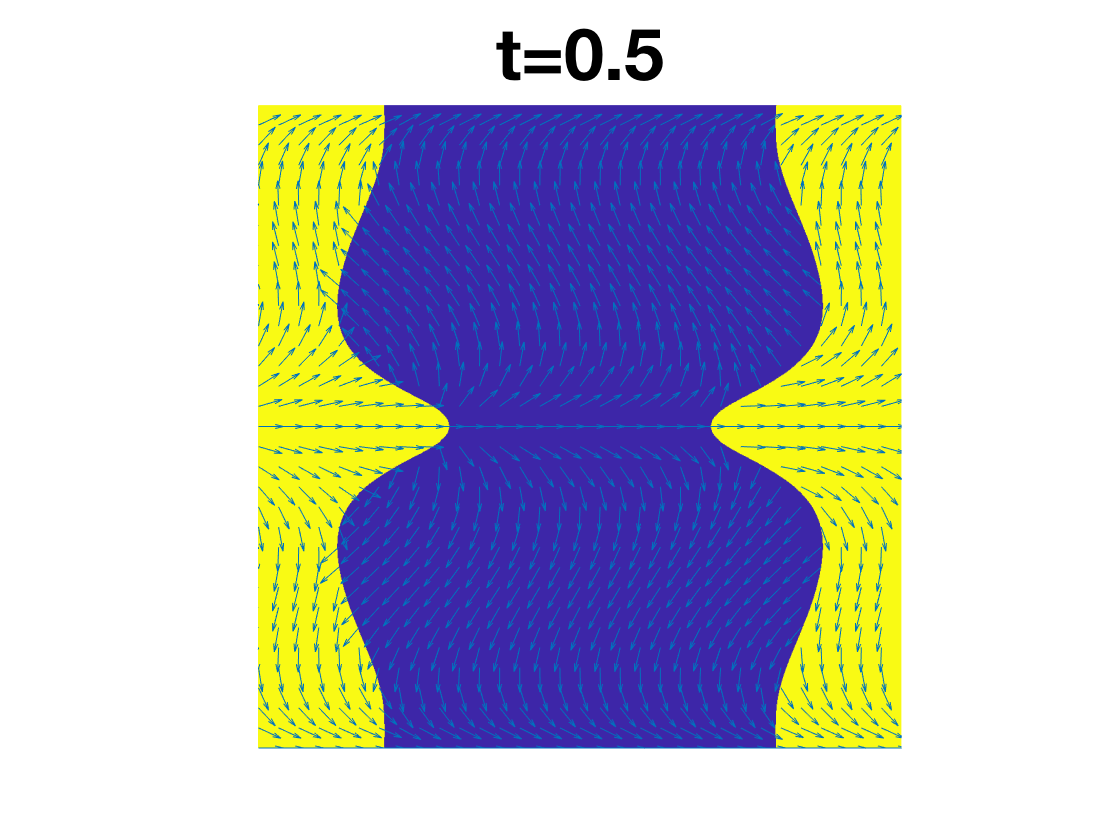}
\includegraphics[scale=0.18,clip,trim= 7cm 1cm 7cm 1cm]{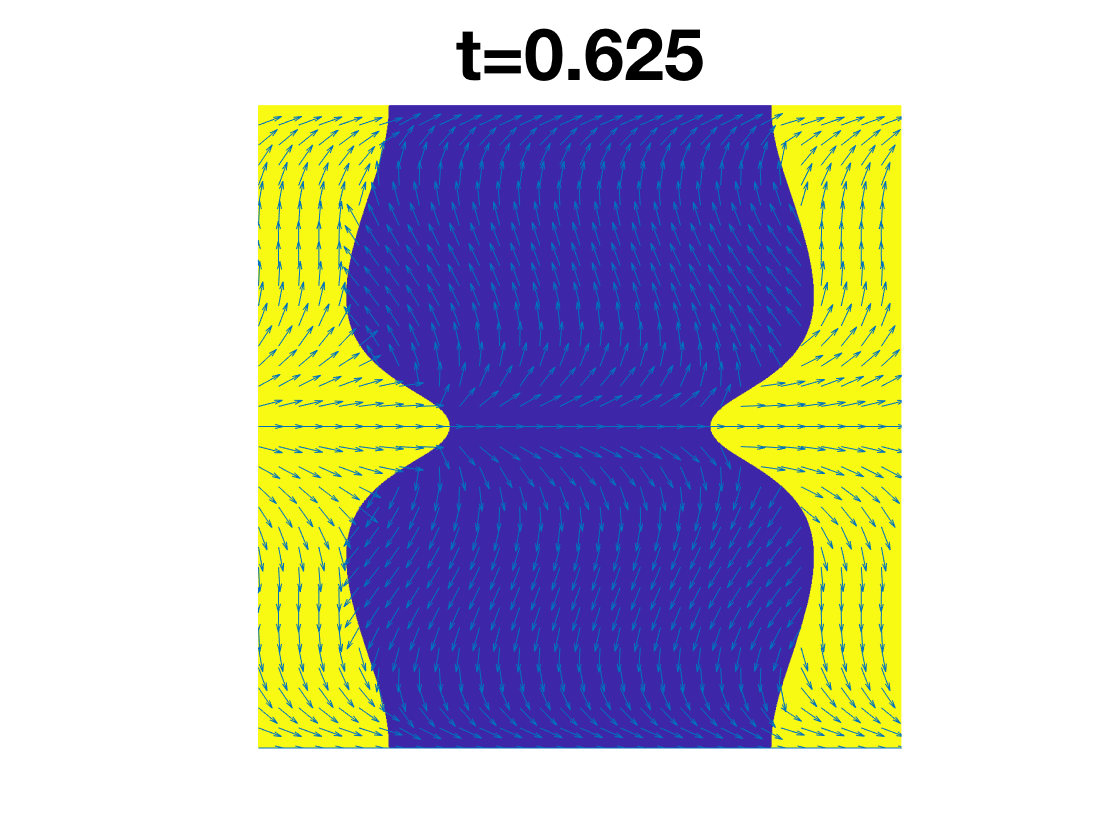}
\includegraphics[scale=0.18,clip,trim= 7cm 1cm 7cm 1cm]{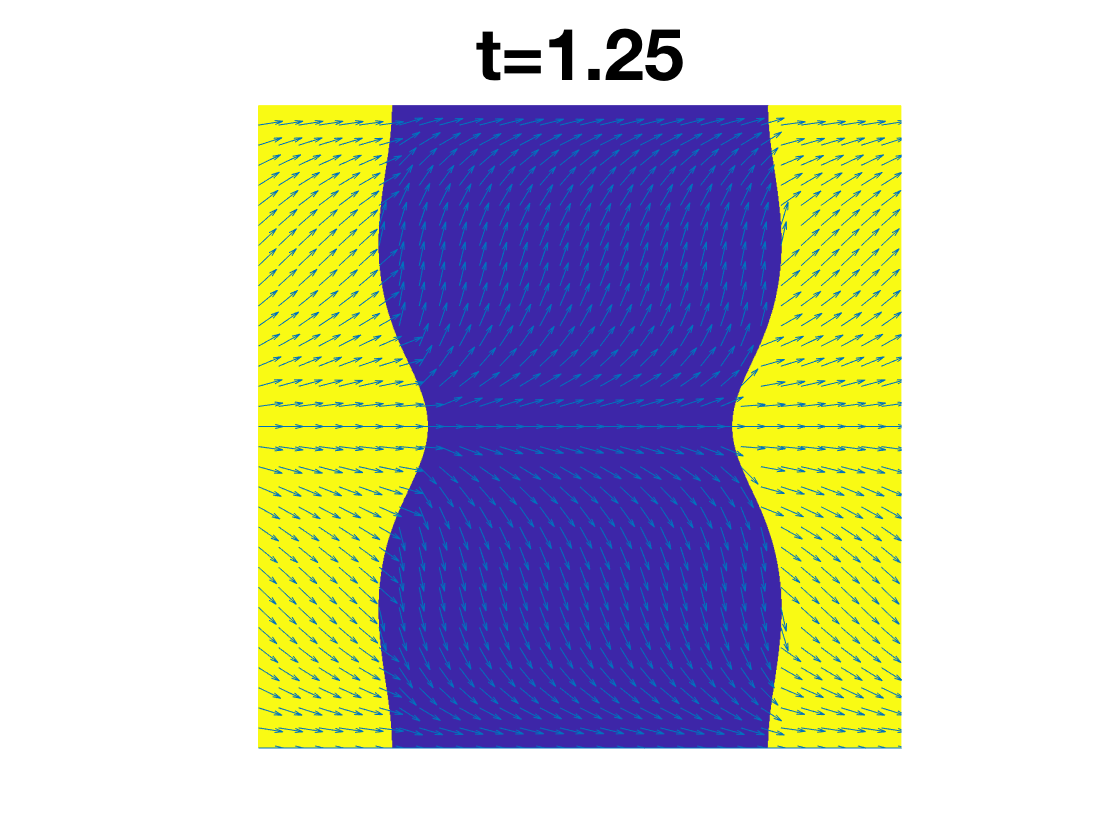}
\includegraphics[scale=0.18,clip,trim= 7cm 1cm 7cm 1cm]{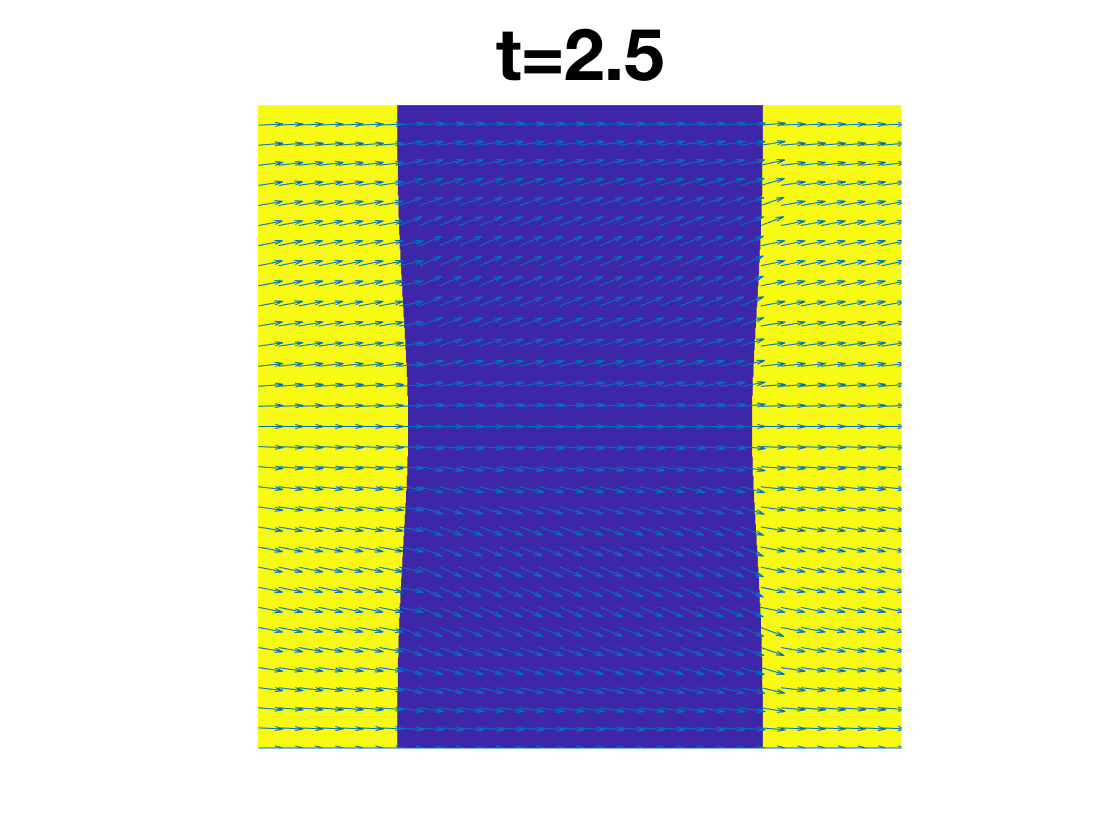}
\includegraphics[scale=0.18,clip,trim= 7cm 1cm 7cm 1cm]{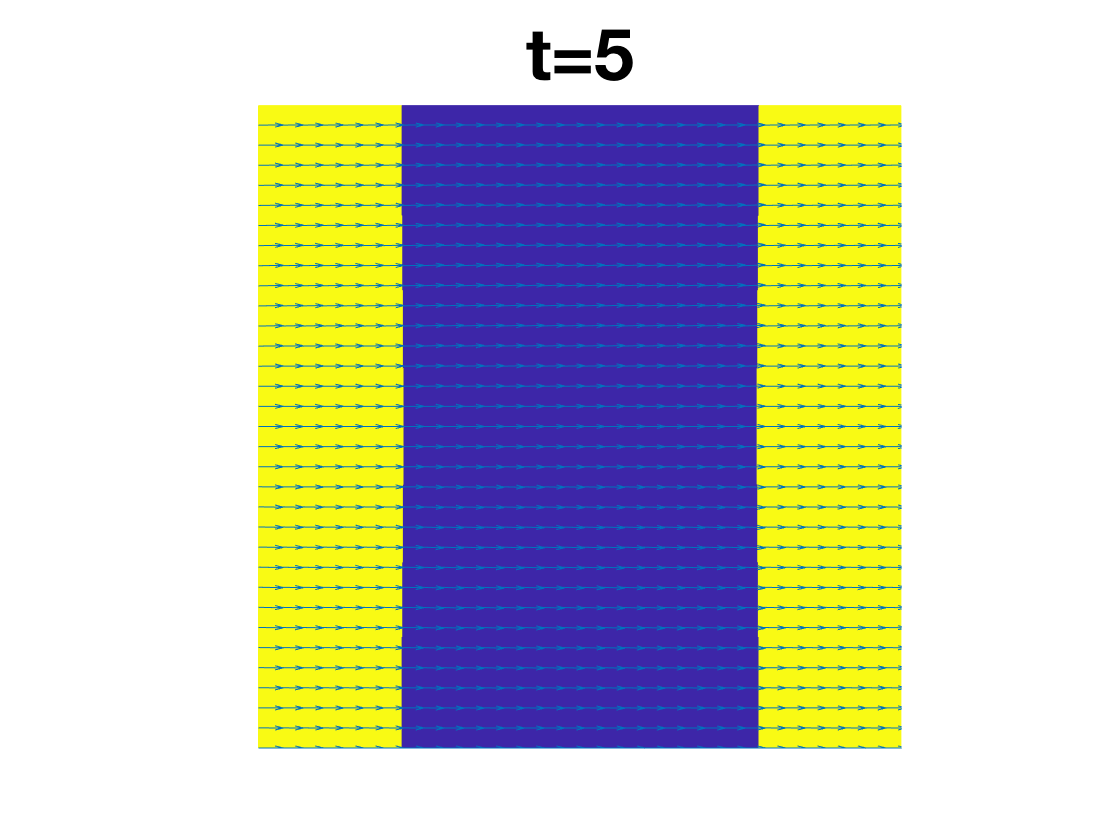}
\caption{Snapshots of the time dynamics of two closed line defects on the flat torus. See \S \ref{s:FT}.}\label{fig:flat_torus_dynamic_para}
\end{figure}

\medskip

As a third example, we take the initial condition to be 
\begin{align*}
A(x,y) = \begin{cases}
\begin{bmatrix}
\cos \alpha & -\sin \alpha \\
\sin \alpha & \cos \alpha
\end{bmatrix}   & \text{if} \  x>0.25|\sin(2.5\pi y)|+0.2 \  \  \text{or}  \ \  x<-0.25|\sin(2.5\pi y)|-0.2 , \\
\begin{bmatrix}
\cos \alpha & \sin \alpha \\
\sin \alpha & -\cos \alpha
\end{bmatrix}   & \text{otherwise} 
\end{cases} . 
\end{align*}
where $\alpha = \alpha(x,y) = 2\pi y$, $\tau = 64dx = 0.0625$ where $dx$ is the grid size and the winding number is 1. 
This initial condition is plotted in the top left panel of Figure~\ref{fig:flat_torus_dynamic_winding_1}. 
We emphasize that the initial interface here and in the second example (see Figures~\ref{fig:flat_torus_dynamic_para}) are the same; the only difference is the function that defines the angle $\alpha = \alpha(x,y)$.
Figure~\ref{fig:flat_torus_dynamic_winding_1} displays the time evolution of the field.  We observe that the initial line defect begins to straighten as in  Figure~\ref{fig:flat_torus_dynamic_para}, but then, as a consequence of the field, finds a way close and shrink to a point. The solution converges to a non-constant field, in contrast to the first example (see Figure~\ref{fig:flat_torus_dynamic_shrinking}).

To further describe this phenomena, we let $v\colon \Omega \to \mathbb C$ be a complex-valued field with no zeros. Let $\gamma \colon [0,1] \rightarrow \Omega$ be a closed curve. We define the \emph{index of $\gamma$ with respect to $v$} to  be
\[\mathrm{ind}_v(\gamma) := \frac{1}{2\pi} \left[ \arg{v(\gamma(1))} - \arg{ v(\gamma(0))} \right].  \]
Clearly the index of $\gamma$ is an integer and varies continuously with deformations to $\gamma$, so it depends only on the homotopy class of $\gamma$. For a torus, we can parameterize the homotopy classes by the number of times the curve wraps around $\Omega$ in the $x$- and $y$-directions. Furthermore, if we let $[\gamma]_{m,n}$ denote the equivalence class of curves that wraps around $\Omega$ $m$ times in the $x$-direction and $n$ times in the $y$-direction, then it is not difficult to see that 
\[\mathrm{ind}_v([\gamma]_{m,n}) = \mathrm{ind}_v([\gamma]_{1,0})^m + \mathrm{ind}_v([\gamma]_{0,1})^n. \]
So we can characterize the index of any curve in terms of the indices of $[\gamma]_{1,0}$ and $[\gamma]_{0,1}$. For a given field $v$, we let $I = \left( \mathrm{ind}_v([\gamma]_{1,0}), \mathrm{ind}_v([\gamma]_{1,0}) \right) $ be the pair of indices corresponding to curves that wrap around $\Omega$ once in the $x$- and $y$-directions. 

By identifying $SO(2)$ with $\mathbb S^1 \subset \mathbb C$, we can define the index of an $SO(2)$-valued field on the torus as above. The pair of indices of the uniform field in the first example is $I = (0,0)$. The pair of indices of the stationary field in third example is $I = (1,0)$. More generally, we could consider a field with pair of indices $I = (m,n)$  
of the form 
$$ 
A(x,y) =
\begin{bmatrix}
\cos \alpha & -\sin \alpha \\
\sin \alpha & \cos \alpha
\end{bmatrix}  
\qquad \qquad
\textrm{where } \ \alpha = \alpha(x,y) = \dfrac{\pi}{2}\sin(2\pi(mx+ny)).
$$
Further numerical examples suggest that, for all $m,n \in \mathbb N$, this field is stationary under iterations of the MBO algorithm.  

\begin{figure}[h!]
\centering
\includegraphics[scale=0.18,clip,trim= 7cm 1cm 7cm 1cm]{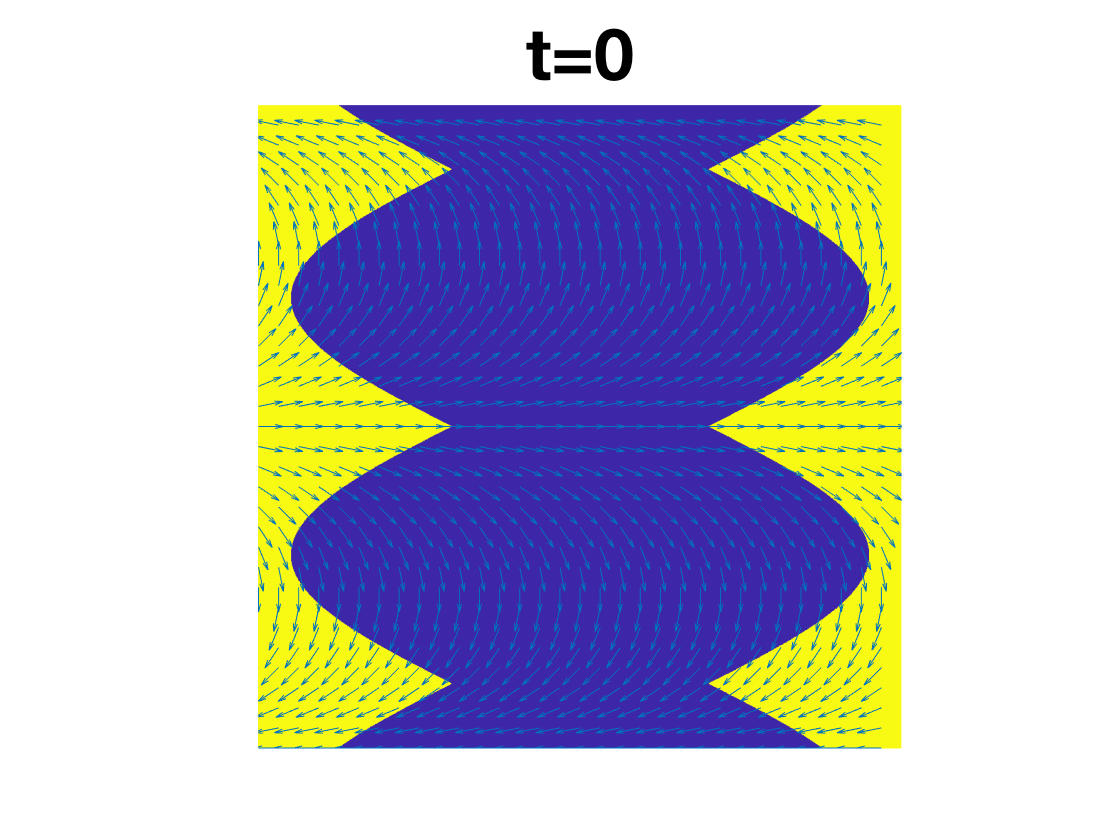}
\includegraphics[scale=0.18,clip,trim= 7cm 1cm 7cm 1cm]{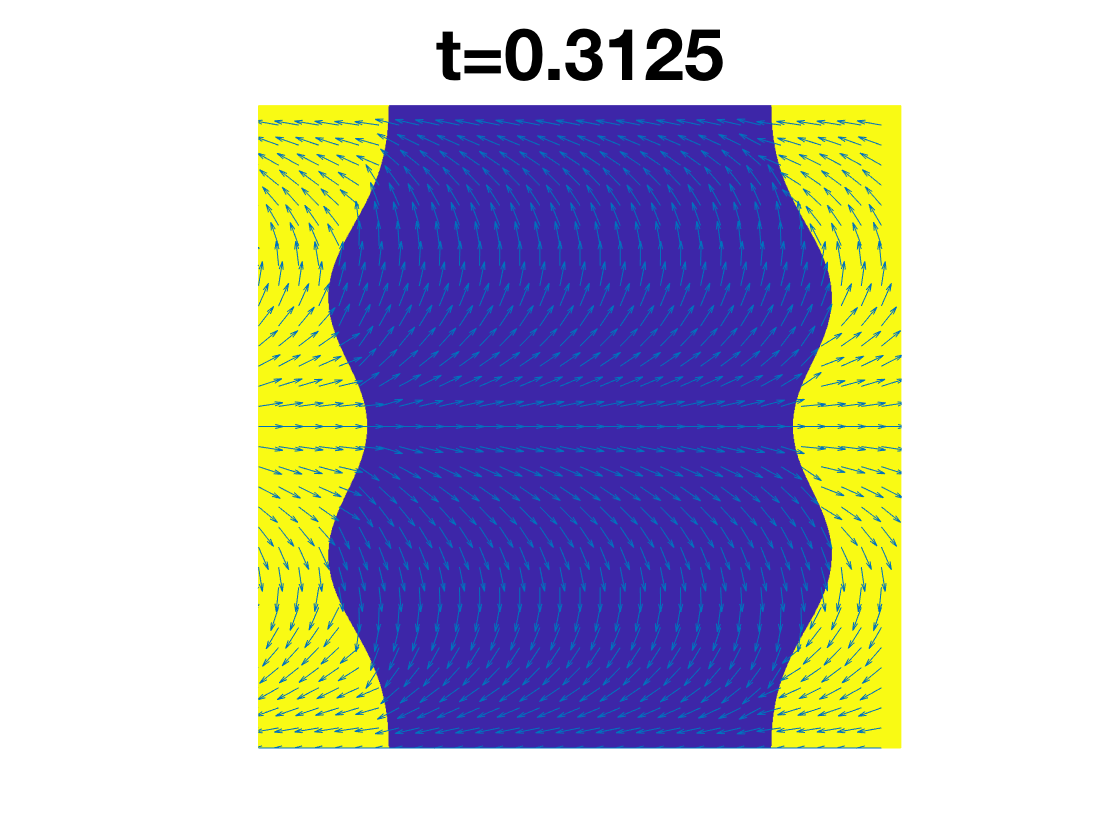}
\includegraphics[scale=0.18,clip,trim= 7cm 1cm 7cm 1cm]{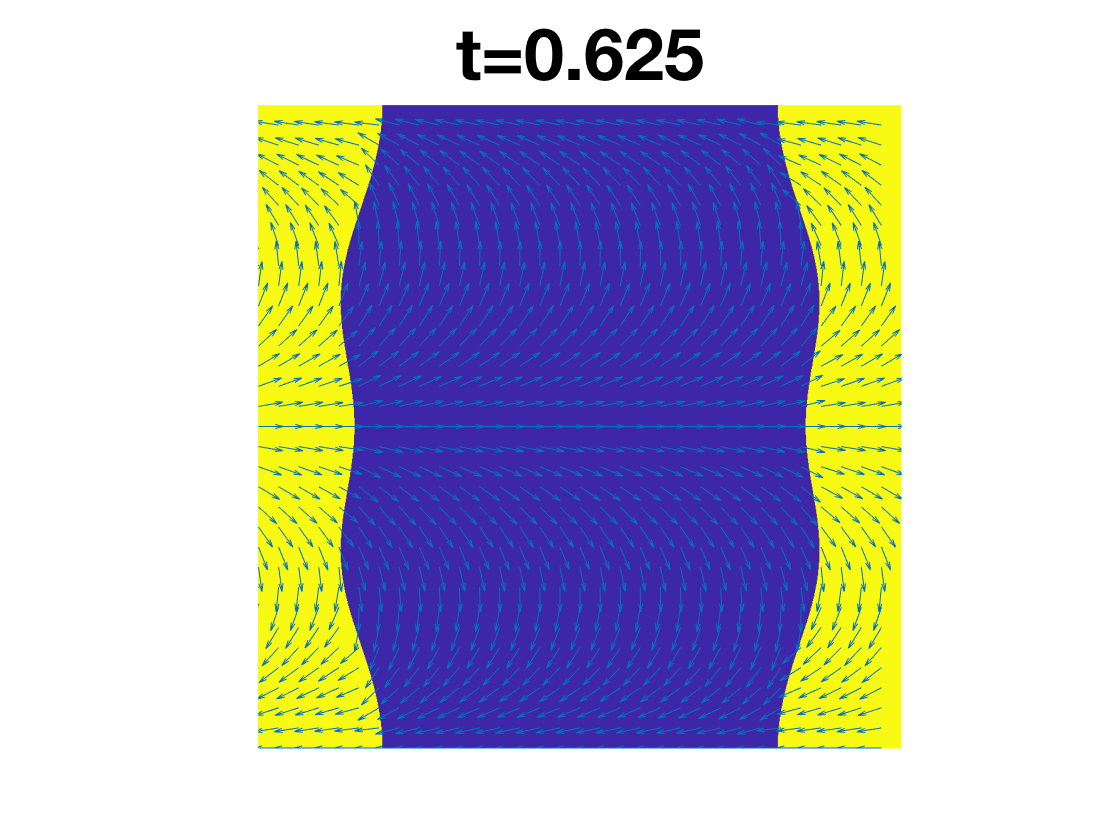} \\ 
\includegraphics[scale=0.18,clip,trim= 7cm 1cm 7cm 1cm]{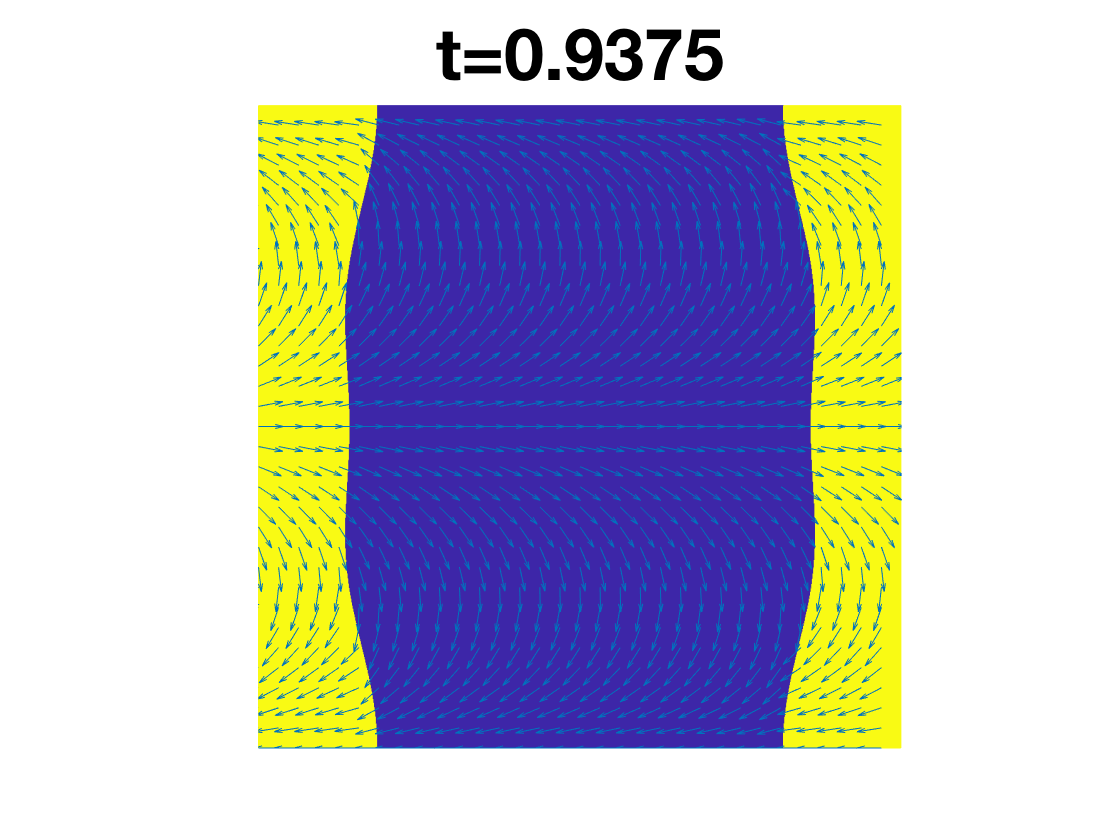}
\includegraphics[scale=0.18,clip,trim= 7cm 1cm 7cm 1cm]{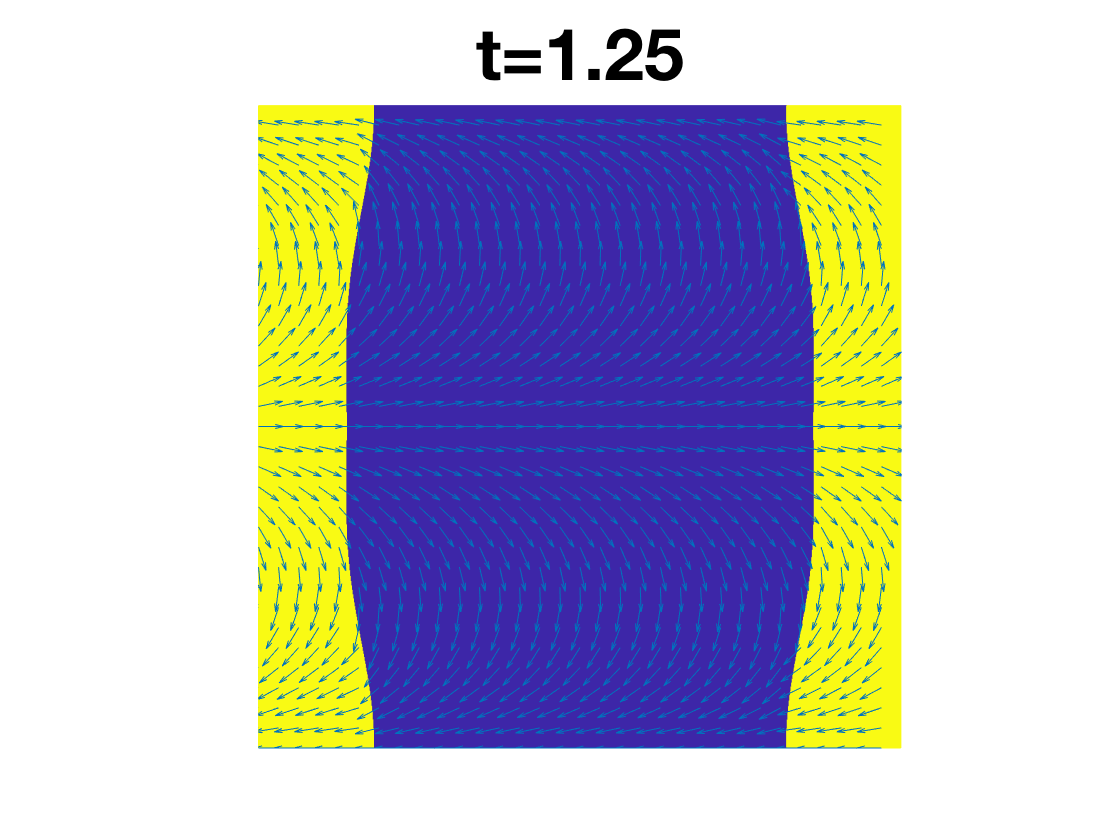}
\includegraphics[scale=0.18,clip,trim= 7cm 1cm 7cm 1cm]{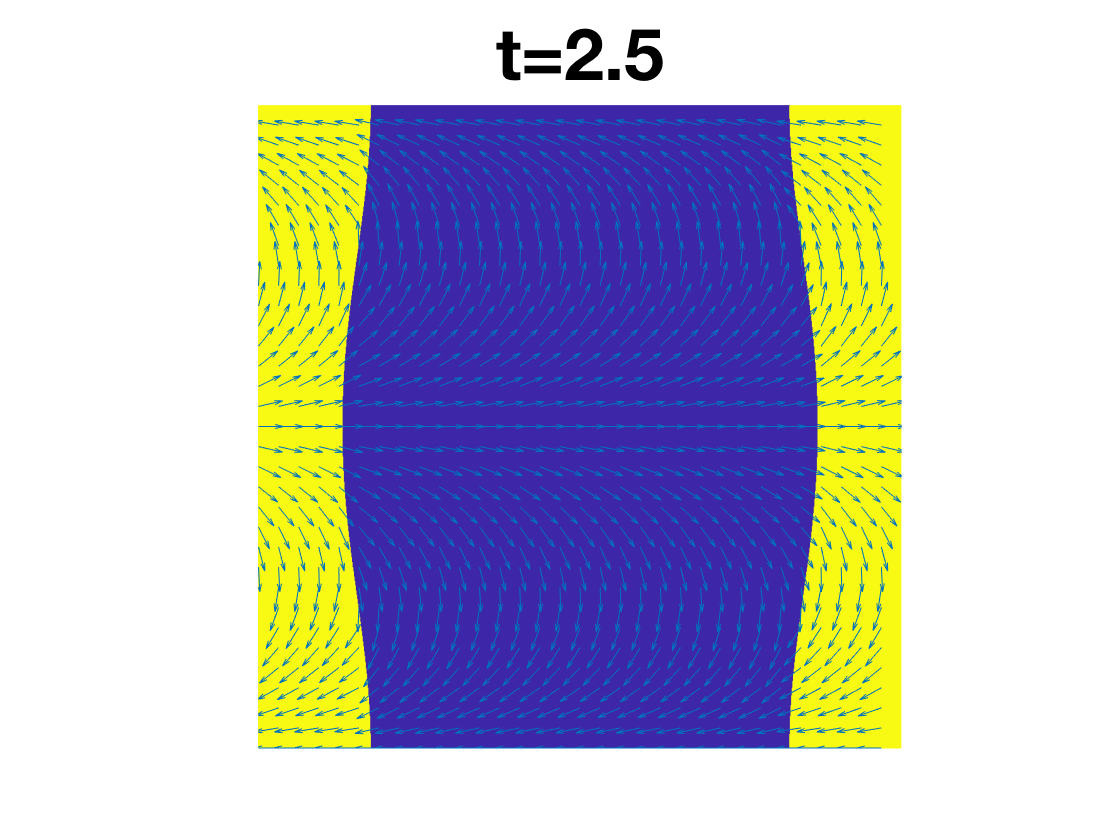} \\
\includegraphics[scale=0.18,clip,trim= 7cm 1cm 7cm 1cm]{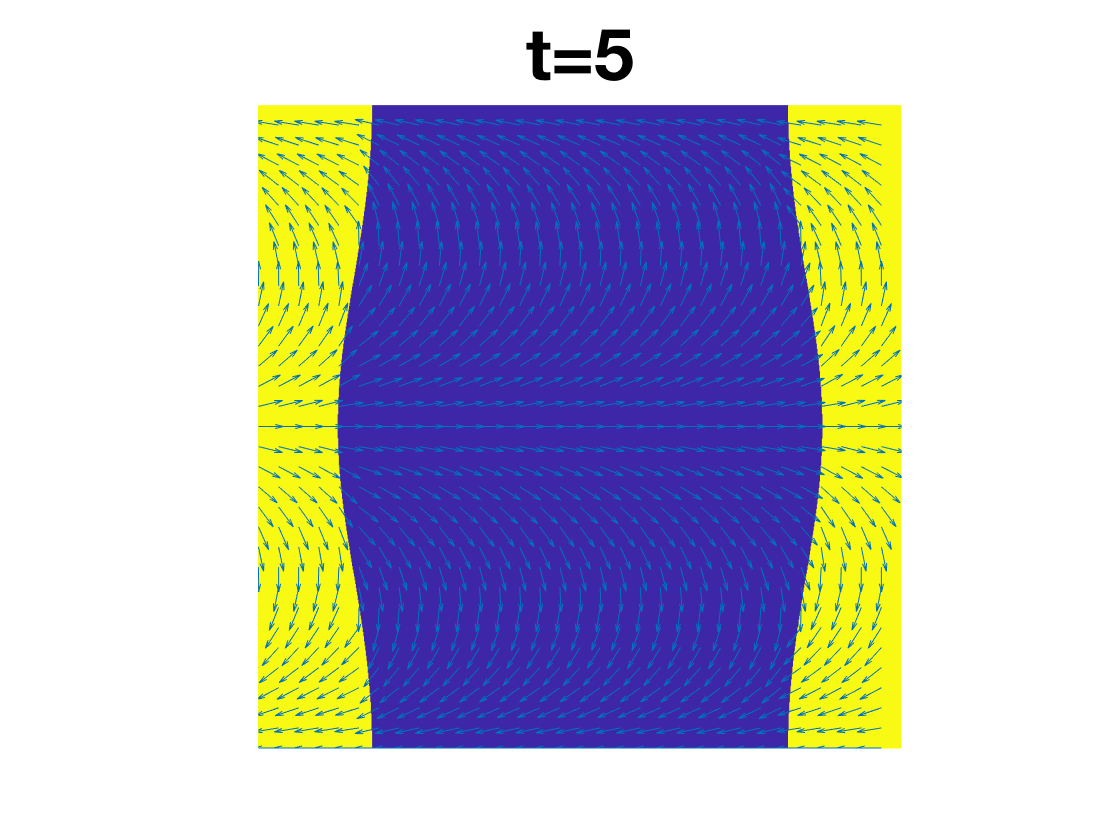}
\includegraphics[scale=0.18,clip,trim= 7cm 1cm 7cm 1cm]{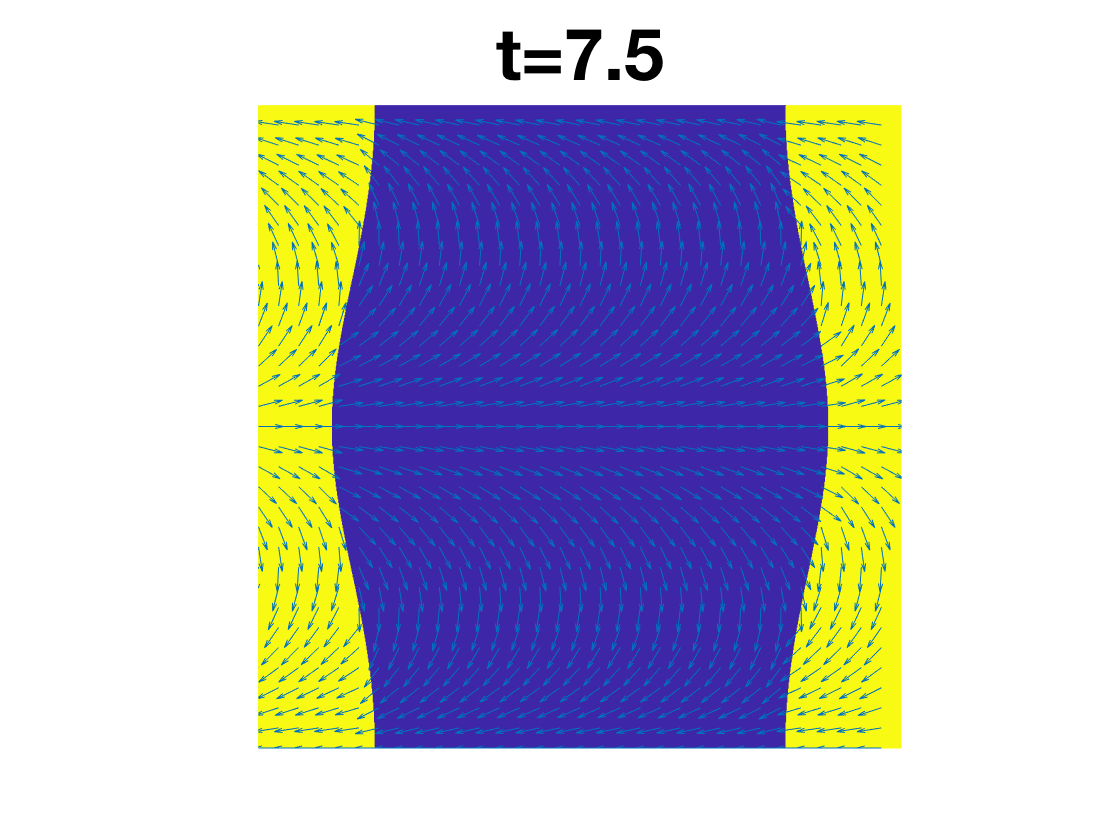}
\includegraphics[scale=0.18,clip,trim= 7cm 1cm 7cm 1cm]{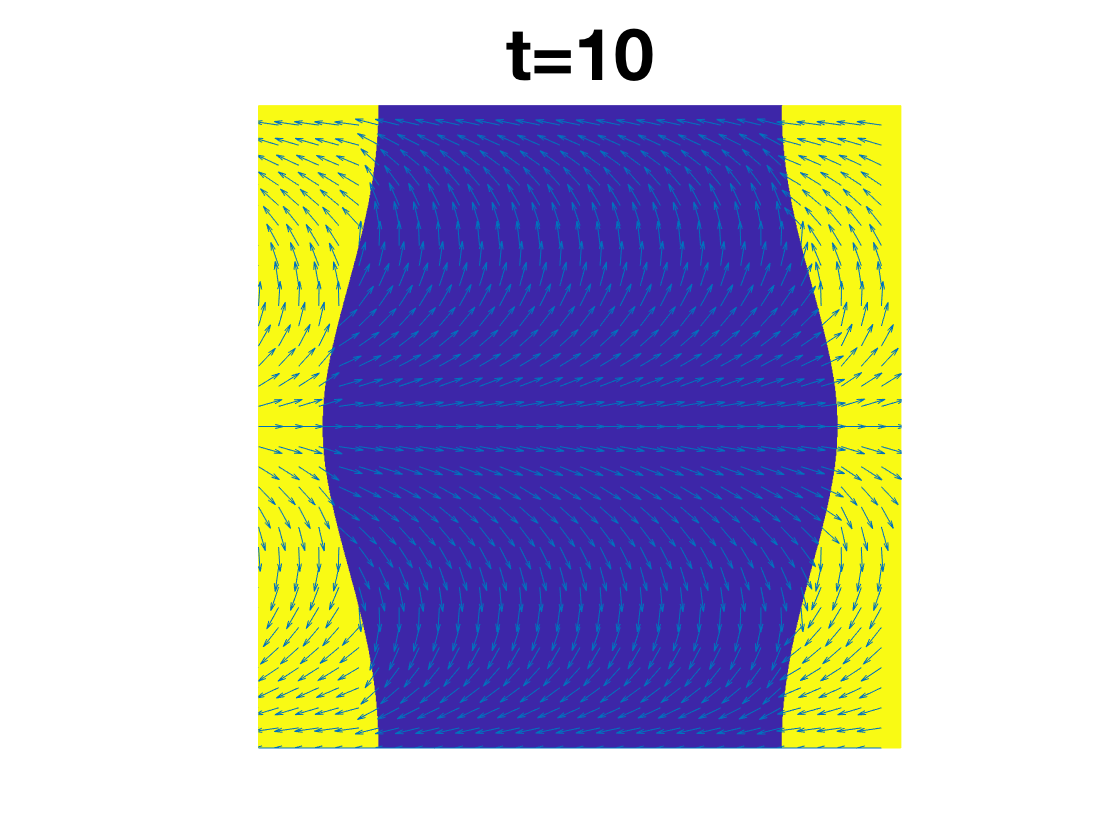} \\
\includegraphics[scale=0.18,clip,trim= 7cm 1cm 7cm 1cm]{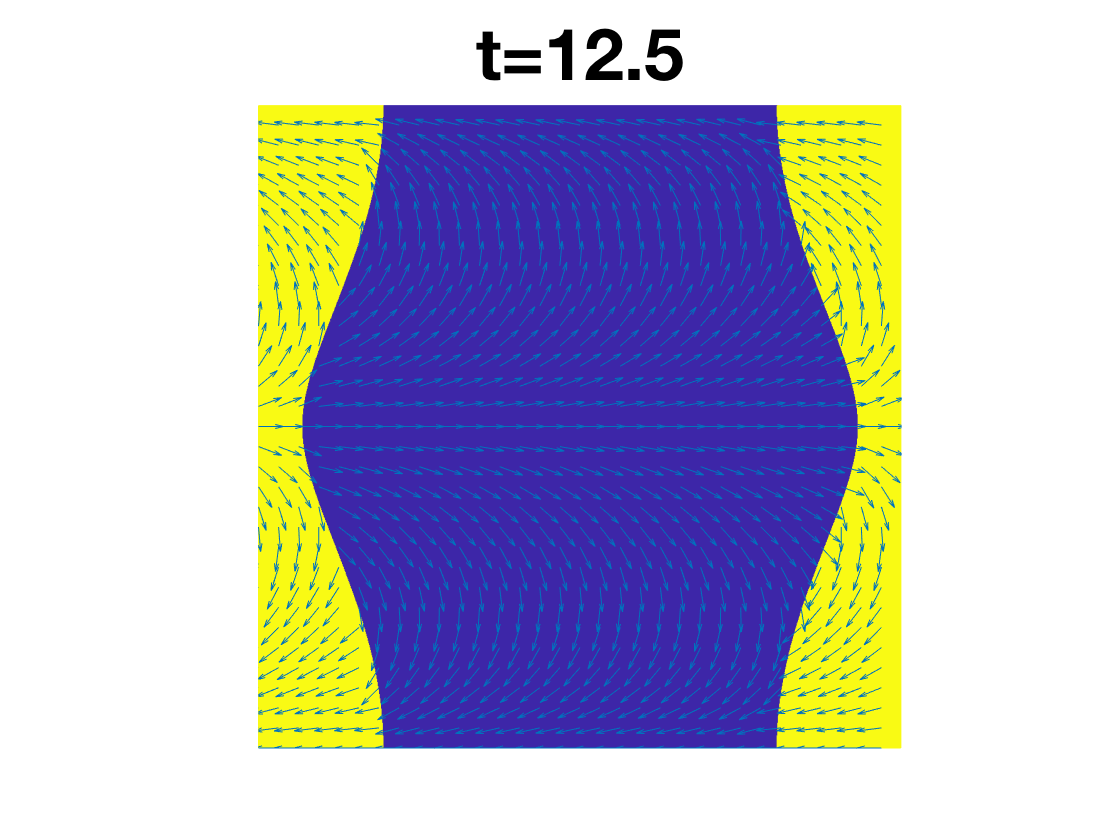}
\includegraphics[scale=0.18,clip,trim= 7cm 1cm 7cm 1cm]{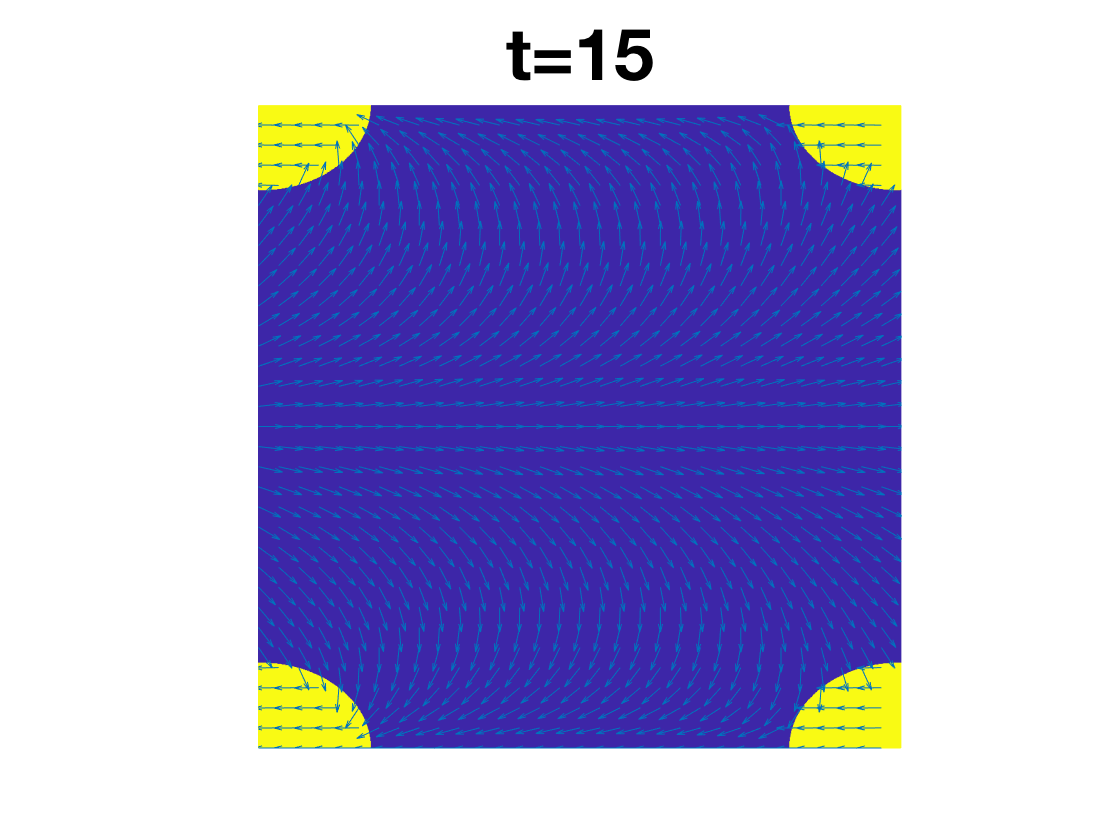}
\includegraphics[scale=0.18,clip,trim= 7cm 1cm 7cm 1cm]{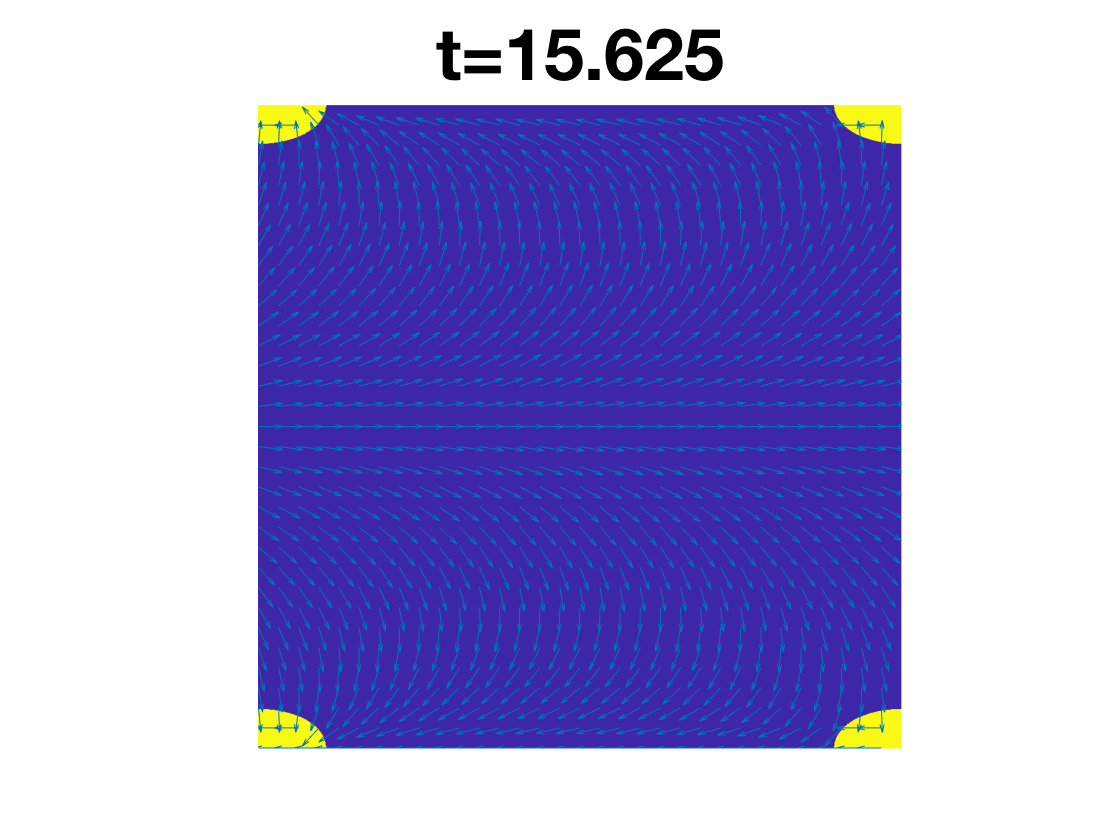}
\caption{Snapshots of the time dynamics on the flat torus where the initial condition has two closed line defects  and winding number 1. See \S \ref{s:FT}.}\label{fig:flat_torus_dynamic_winding_1}
\end{figure}

\subsection{Sphere} \label{s:Sphere}
Let $\Omega = \mathbb S^2$ be a sphere with radius $1$. Since the sphere is a geodesically convex manifold, there is no local minimizer for the energy in \eqref{eq:GL}. In this experiment, we set the grid size $dx = 0.05$, time step size $\tau = 0.005$, accuracy $\varepsilon = 10^{-6}$, the order for quadrature points $p=3$, and the band width $0.5532$  (see Table~\ref{tab:Estimatebw}). After using the closest point representation, the number of degrees of freedom is $136,114$. The number of Fourier modes is $54 \times 54 \times 54$ (see Table~\ref{tab:Fourier_mode}). The initial condition is given by two random matrices in $O(3)$,  one in $SO(3)$ and the other  in $SO^{-}(3)$, 
\begin{align*}
A(x,y,z) = \begin{cases}
\begin{bmatrix}
0.392227 &   0.706046 & 0.046171 \\
   0.655478 &  0.031833 &  0.097132\\ 
   0.171187 & 0.276923 & 0.82346
\end{bmatrix},   & \text{if} \ x<0, \ y<0, \ \text{and} \  z>0 \\
\begin{bmatrix}
0.699077 &  0.547216 &  0.257508 \\
   0.890903 &   0.138624 &   0.840717 \\
  0.959291 &  0.149294&  0.254282
\end{bmatrix}  & \text{otherwise} 
\end{cases}.
\end{align*}
The initial condition is illustrated in Figure~\ref{fig:sphere_dynamic}. Figure~\ref{fig:sphere_dynamic} displays the time dynamics of the line defect on the sphere. The defect evolves toward a circle on the surface as it shrinks. We also observe that the velocity is larger when the circle is smaller. 

\begin{figure}[t!]
\centering
\includegraphics[scale=0.46,clip,trim= 5cm 0cm 5cm 0cm]{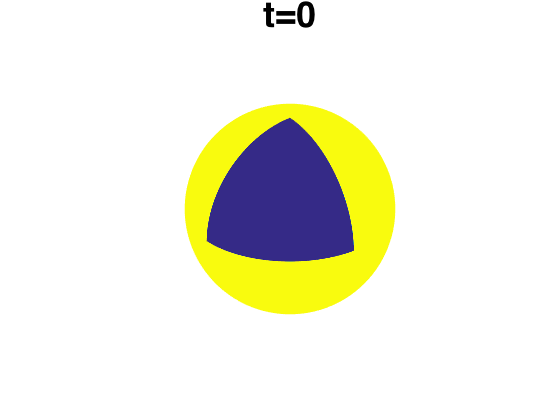}
\includegraphics[scale=0.46,clip,trim= 5cm 0cm 5cm 0cm]{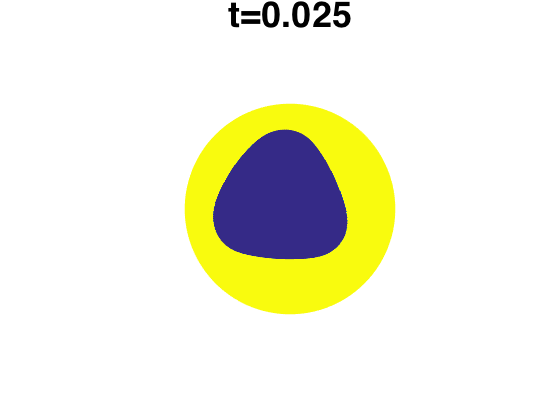}
\includegraphics[scale=0.46,clip,trim= 5cm 0cm 5cm 0cm]{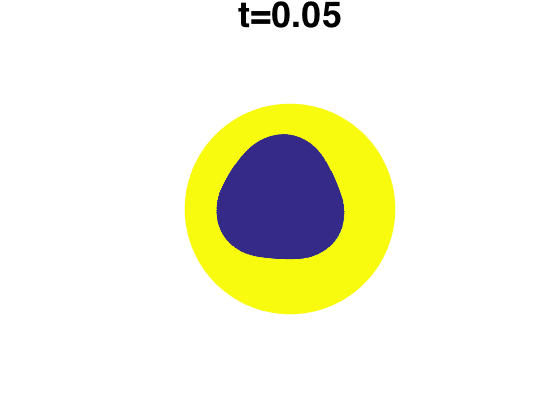} \\ 
\includegraphics[scale=0.46,clip,trim= 5cm 0cm 5cm 0cm]{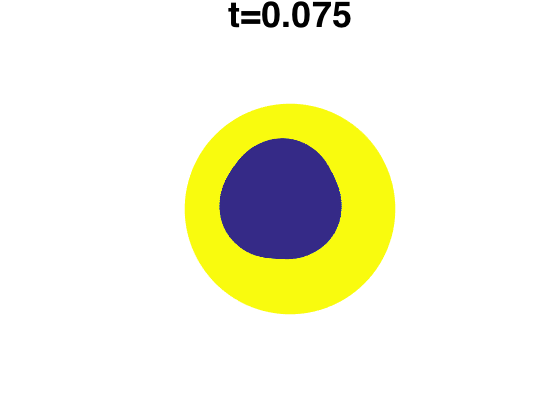}
\includegraphics[scale=0.46,clip,trim= 5cm 0cm 5cm 0cm]{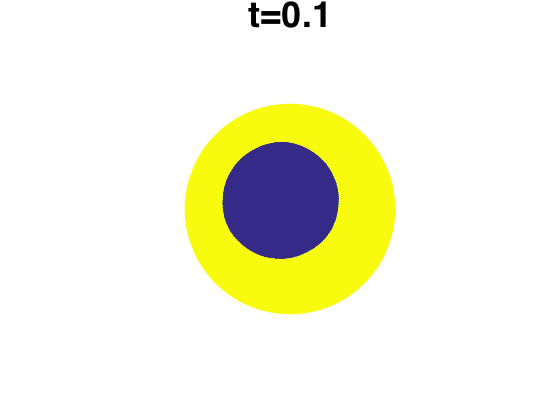}
\includegraphics[scale=0.46,clip,trim= 5cm 0cm 5cm 0cm]{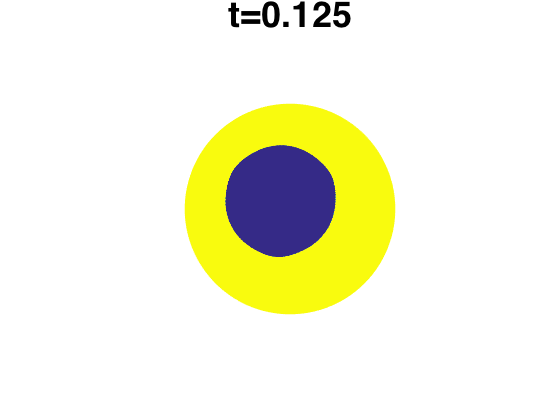} \\
\includegraphics[scale=0.46,clip,trim= 5cm 0cm 5cm 0cm]{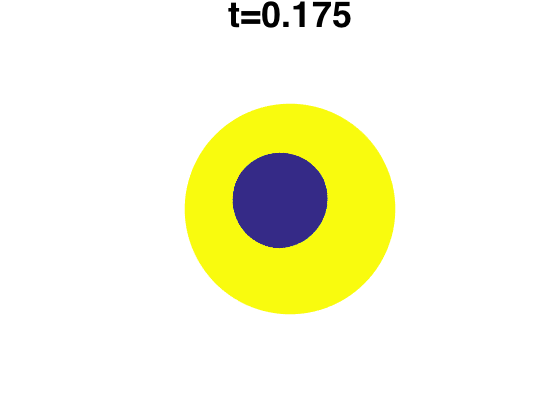}
\includegraphics[scale=0.46,clip,trim= 5cm 0cm 5cm 0cm]{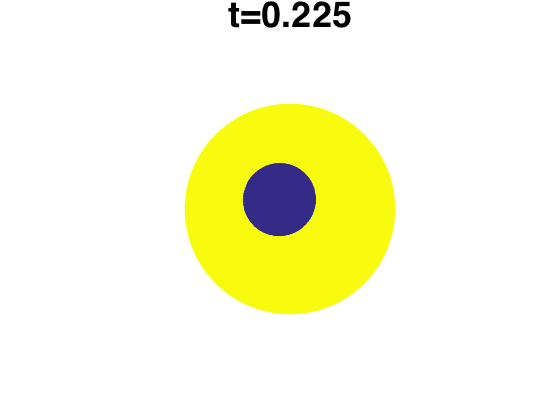}
\includegraphics[scale=0.46,clip,trim= 5cm 0cm 5cm 0cm]{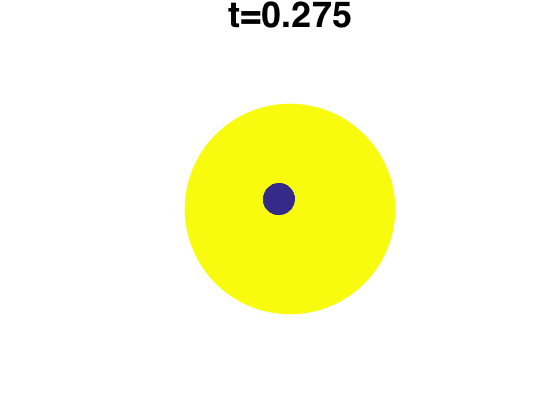}
\caption{Snapshots of the time dynamics of a line defect on a sphere. See \S\ref{s:Sphere}.}\label{fig:sphere_dynamic}
\end{figure}

\subsection{Peanut surface} \label{s:Peanut}
We consider a ``peanut surface'' with the parametric equation depends on $(t,\theta)$ given by:
\begin{align*}
x(t,\theta) &= (3 t-t^3),\\
y(t,\theta) &= \frac{1}{2}\sqrt{(1+x^2)(4-x^2)}\cos(\theta),\\
z(t,\theta) &= \frac{1}{2}\sqrt{(1+x^2)(4-x^2)}\sin(\theta), 
\end{align*} 
with $t \in [-1,1]$ and $\theta \in [0,2\pi]$. This surface is generated by rotating a half peanut curve, illustrated in Figure~\ref{fig:half_peanut_curve}, around the $x-$axis. 

\begin{figure}[t]
\centering
\includegraphics[scale=0.2]{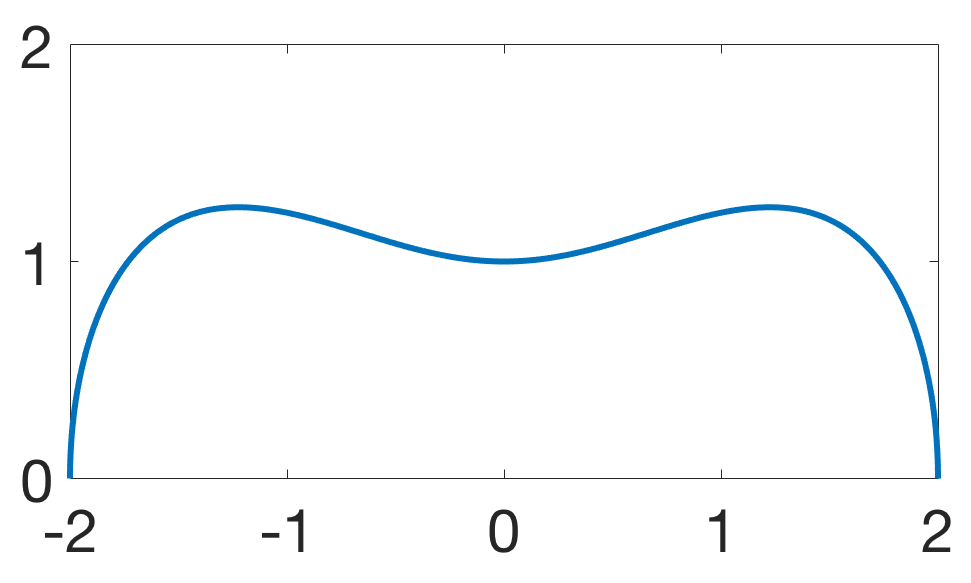}
\caption{Illustration of the half peanut curve. The peanut surface is generated by rotating this half peanut curve about the $x-$axis. See \S\ref{s:Peanut}.} \label{fig:half_peanut_curve}
\end{figure}

For this surface of revolution, we can consider the closest point representation for the half peanut curve in 2-dimensional space and then rotate all the grid points with corresponding closest points to the 3-dimensional space about the $x-$axis. In this experiment, we set the grid size $dx = 0.04$, rotational angle size $d\theta = \frac{\pi}{30}$, time step size $\tau = 0.032$, accuracy $\varepsilon = 10^{-6}$, the order for quadrature points $p=4$, and the band width $1.3994$ (see Table~\ref{tab:Estimatebw}). After using closest point representation, the number of degrees of freedom is $262,144$. The number of Fourier modes is $84 \times 84 \times 84$ (see Table~\ref{tab:Fourier_mode}).

Using the same matrices as in Section~\ref{s:Sphere}, the initial condition is given by 
\begin{align*}
A(x,y,z) = \begin{cases}
\begin{bmatrix}
0.392227 &   0.706046 & 0.046171 \\
   0.655478 &  0.031833 &  0.097132\\ 
   0.171187 & 0.276923 & 0.82346
\end{bmatrix}   & \text{if}  \  x>\sqrt{(y^2+z^2)(y^2+0.1)/1.5} \\
\begin{bmatrix}
0.699077 &  0.547216 &  0.257508 \\
   0.890903 &   0.138624 &   0.840717 \\
  0.959291 &  0.149294&  0.254282
\end{bmatrix} & \text{otherwise} 
\end{cases} . 
\end{align*}
The initial condition is illustrated in Figure~\ref{fig:peanut_initial}. 

We then apply Algorithm~\ref{a:MBO} for this initial condition on the peanut surface. Figure~\ref{fig:peanut_dynamic} displays snapshots of the time dynamics of the line defect. Figure~\ref{fig:peanut_final} displays views of the final state from different angles. The interface converges to a closed geodesic on the surface, which locally minimizes the perimeter. 

\begin{figure}[t!]
\centering
\includegraphics[scale=0.35,clip,trim= 3cm 0cm 3cm 0cm]{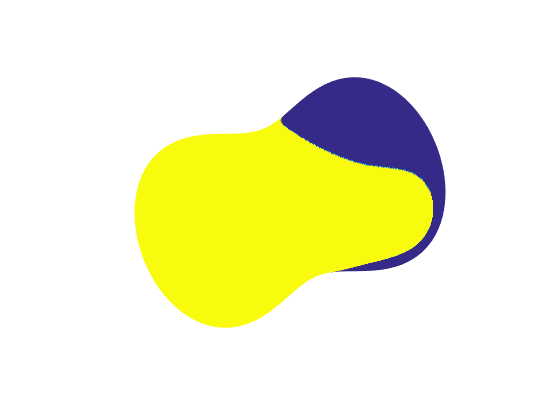}
\includegraphics[scale=0.25,clip,trim= 1cm 0cm 1cm 0cm]{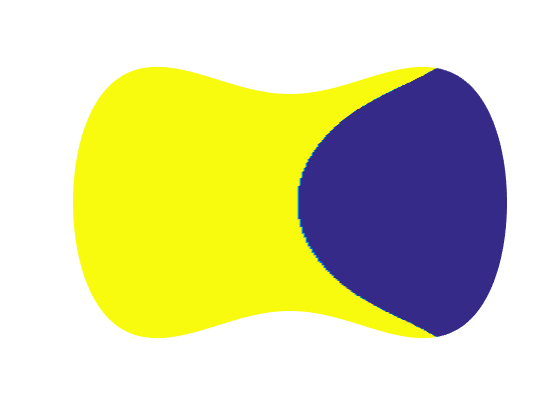}
\includegraphics[scale=0.25,clip,trim= 1cm 0cm 1cm 0cm]{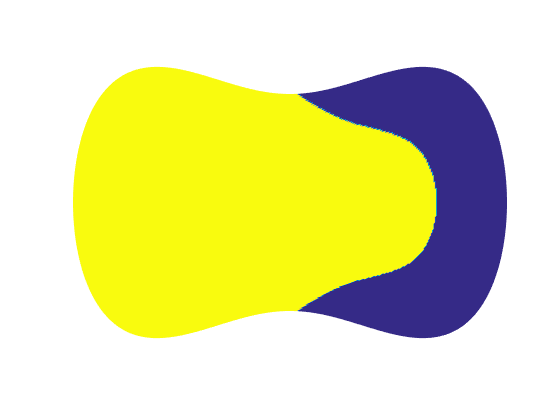}
\caption{{\bf (left)} Initial condition on the peanut, {\bf (center)} a vertical view of the initial condition, and {\bf (right)} a front view of the initial condition. See \S\ref{s:Peanut}.}\label{fig:peanut_initial}
\end{figure}

\begin{figure}[t!]
\begin{center}
\ \vspace{.7cm}

\includegraphics[scale=0.4,clip,trim= 4cm 0cm 4cm 0cm]{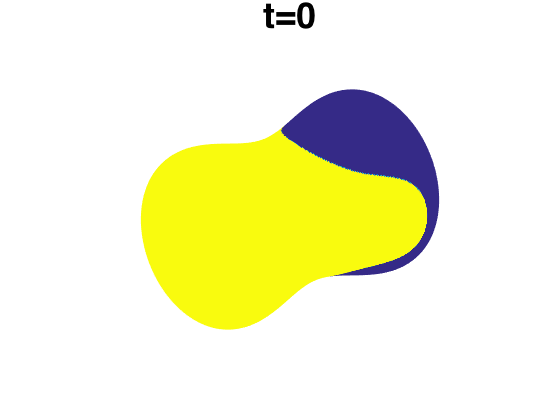}
\includegraphics[scale=0.4,clip,trim= 4cm 0cm 4cm 0cm]{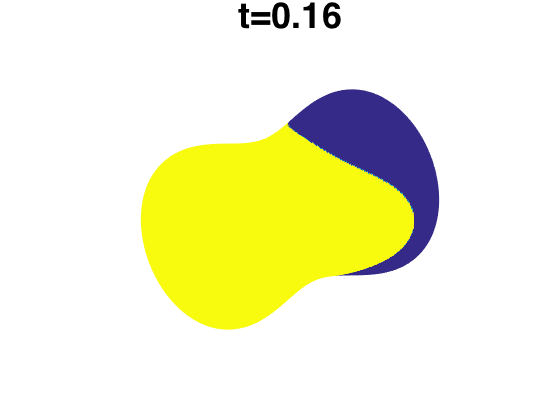}
\includegraphics[scale=0.4,clip,trim= 4cm 0cm 4cm 0cm]{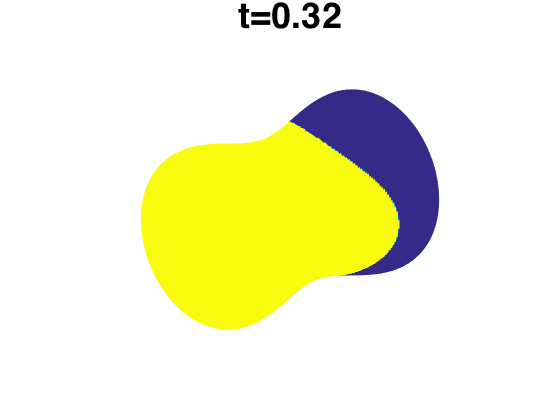}\\
\vspace{.7cm}

\includegraphics[scale=0.4,clip,trim= 4cm 0cm 4cm 0cm]{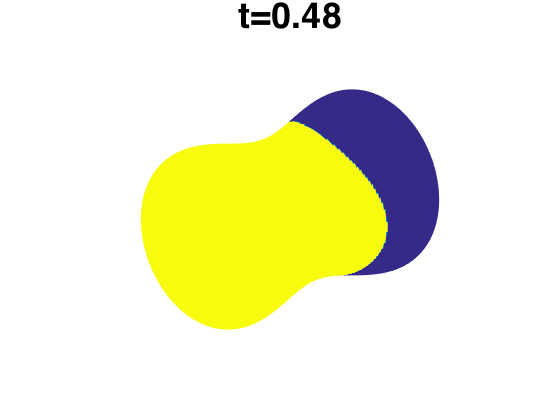}
\includegraphics[scale=0.4,clip,trim= 4cm 0cm 4cm 0cm]{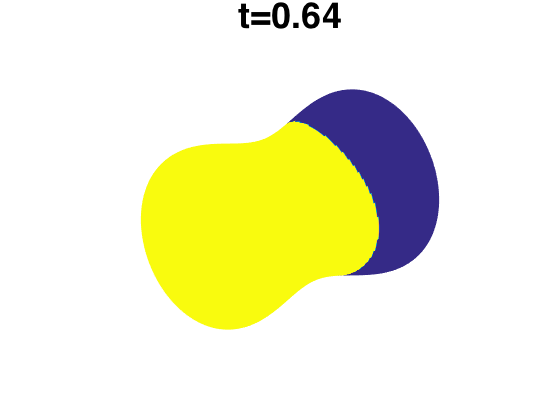}
\includegraphics[scale=0.4,clip,trim= 4cm 0cm 4cm 0cm]{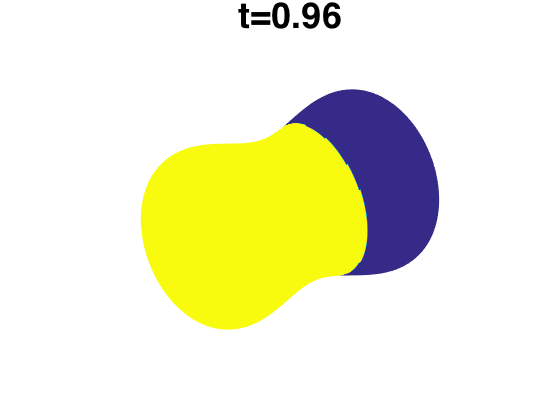}\\
\vspace{.7cm}

\includegraphics[scale=0.4,clip,trim= 4cm 0cm 4cm 0cm]{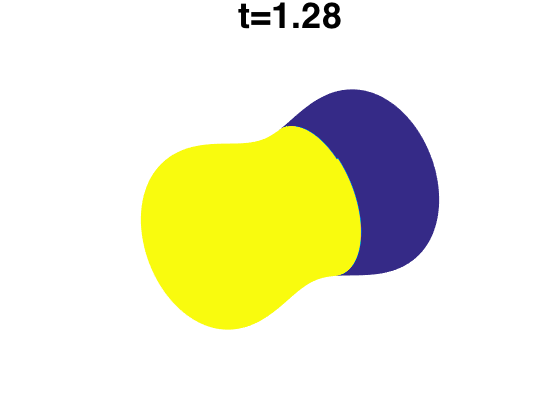}
\includegraphics[scale=0.4,clip,trim= 4cm 0cm 4cm 0cm]{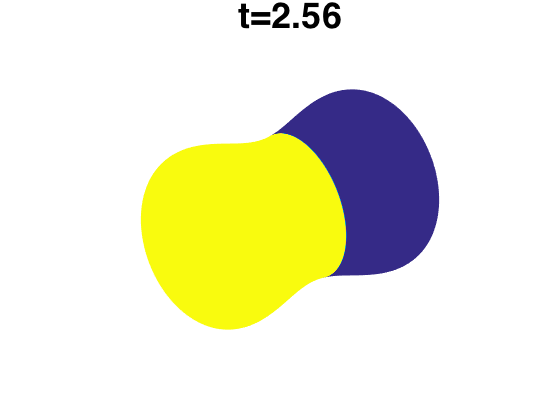}
\includegraphics[scale=0.4,clip,trim= 4cm 0cm 4cm 0cm]{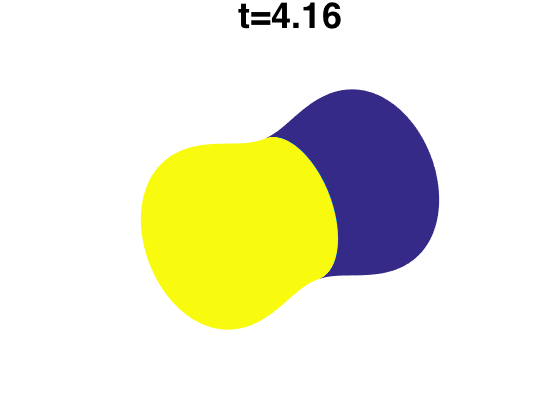}
\caption{Snapshots of the time dynamics of a line defect on the peanut surface. See \S\ref{s:Peanut}.}\label{fig:peanut_dynamic}
\end{center}
\end{figure}

\begin{figure}[t]
\includegraphics[scale=0.3]{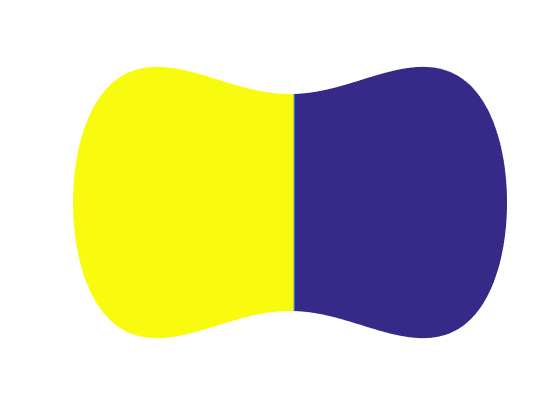}
\includegraphics[scale=0.3]{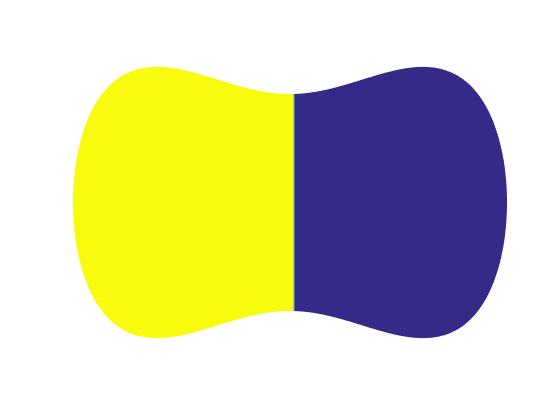}
\caption{{\bf (left)} A vertical view of the equilibrium state.  {\bf (right)} A front view of the equilibrium state. See \S\ref{s:Peanut}.}\label{fig:peanut_final}
\end{figure}

\section{Volume constrained problem} \label{s:VolConst}
In this section, we consider a volume constrained flow, where we constrain
\[ \textrm{vol}( \{ x \colon A(x) \in SO(n) \} ) = V.  \]
To approach this problem numerically, we modify Algorithm \ref{a:MBO} as follows.
We solve the diffusion or surface diffusion equation using the algorithm introduced in Section~\ref{s:Imp}. The projection of $A(\tau,x)$ to $O_n$ is given in Lemma~\ref{l:proj}. Note that, in Algorithm \ref{a:MBO}, $A(\tau,x)$ is projected to $SO(n)$ if $\det(A(\tau,x))>0$ and  $SO^{-}(n)$ if $\det A(\tau,x)<0$.  However, to constrain the volume, one may need to reassign some $A(\tau,x)$ with $\det A(\tau,x)>0$ to a matrix in $SO^{-}(n)$ or  reassign some $A(\tau,x)$ with $\det(A(\tau,x))<0$ by a matrix in $SO(n)$. 
For this purpose, we give the following lemma.
\begin{lem}\label{l:projso-}
Let $A\in \mathbb{R}^{n \times n}$ have a singular value decomposition, $A = U\Sigma V^t$ where $\Sigma$ is a diagonal matrix with diagonal entries $\sigma_i$ in descending order.  Let $D_n$ be the diagonal matrix with diagonal entries $1$ everywhere except in the $n-$th position, where it is $-1$. If $\det A>0$, then $C^{\star}=UD_nV^t$ attains the minimum in 
\begin{align} \label{eq:minso2so-}
dist^2(SO^{-}(n),A) = \min_{C\in SO^{-}(n)} \|C-A\|_F^2 = \sum_{i=1}^n
(\sigma_i-1)^2+4 \sigma_{\min}.
\end{align}
Similarly, if $\det A <0$, then $C^{\star}=UD_nV^t$ attains the minimum in 
\begin{align}
dist^2(SO(n),A) = \min_{C\in SO(n)} \|C-A\|_F^2 = \sum_{i=1}^n
(\sigma_i-1)^2+4 \sigma_{\min}.
\end{align}
\begin{proof}
Assume $\det A >0$ and consider \eqref{eq:minso2so-}. 
Note that $\|C-A\|_F^2 = \|C-U\Sigma V^t\|_F^2 = \|U^tCV-\Sigma \|_F^2 $. 
Since $C \in SO^{-}(n)$, defining $B = U^tCV$, we have $BB^t =  I $ which indicates that $B\in O(n)$. Furthermore, since 
$\det(A) = \det(U)\det(V^t)\det(\Sigma) >0$ 
and $\det(U)\det(V) = 1$, we have $\det(B)=\det(U^t)\det(C)\det(V) = -1$ which implies that $B\in SO^-(n)$. Then, the problem in  \eqref{eq:minso2so-} is equivalent to 
\begin{align} \label{eq:minso2so-2}
\min_{B\in SO^{-}(n)} \|B-\Sigma\|_F^2.
\end{align}
We have 
$\|B-\Sigma\|_F^2 = \|B\|_F^2+\|\Sigma\|_F^2-2\langle B,\Sigma\rangle_F 
= n+\sum_{i\in [n]} \sigma_i^2-2 \sum_{i\in[n]} B_{ii}\sigma_i$. 
Since $B\in SO^-(n)$, $\textrm{tr}(B) = \sum_{i\in[n]} \lambda_i(B)\leq n-2 $ and $B_{ii} \in [-1,1], \forall i\in [n]$. 
Since $\sigma_i$ are non-increasing,  the maximum of $ \sum_{i\in[n]} B_{ii}\sigma_i$  is attained when $B_{ii} = 1, \  \forall i\in [n-1]$ and $B_{nn} = -1$. 
Then, we have that $D_n$ attains the minimum in \eqref{eq:minso2so-2} which implies that 
$UD_nV^t$ attains the minimum in \eqref{eq:minso2so-} which has value $\sum_{i=1}^n
(\sigma_i-1)^2+4 \sigma_{\min}$.

The result for $\det A<0$ is similarly proved.
\end{proof}
\end{lem}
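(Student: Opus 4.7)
The plan is to reduce the constrained minimization to a one-dimensional linear program via the orthogonal invariance of the Frobenius norm, closely mirroring the strategy of Lemma~\ref{l:proj}. First, I would use $\|C - A\|_F^2 = \|U^t C V - \Sigma\|_F^2$ and set $B = U^t C V$. When $\det A > 0$, $\Sigma$ is nonsingular with $\det \Sigma > 0$, which forces $\det U \det V = 1$; hence the substitution $C \mapsto B$ is a bijection from $SO^-(n)$ to $SO^-(n)$. The problem thus becomes
\[
\min_{B \in SO^-(n)} \ \|B - \Sigma\|_F^2 \;=\; n + \sum_i \sigma_i^2 \;-\; 2 \max_{B \in SO^-(n)} \sum_i B_{ii}\sigma_i,
\]
so the core task is to maximize the linear functional $\sum_i B_{ii}\sigma_i$ over $SO^-(n)$.

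For this, I would invoke two constraints on the diagonal entries of $B$. First, since the columns of an orthogonal matrix are unit vectors, $|B_{ii}| \le 1$. Second, as in the proof of Lemma~\ref{l:SOdist}, since $\det B = -1$ and the eigenvalues of an orthogonal matrix lie on the unit circle with non-real ones coming in conjugate pairs of product $1$, at least one real eigenvalue must equal $-1$, giving $\operatorname{tr}(B) = \sum_i B_{ii} \le n - 2$. This relaxes the maximization to the linear program
\[
\max \ \sum_i x_i \sigma_i \quad \textrm{subject to} \quad x_i \in [-1,1], \ \ \sum_i x_i \le n - 2.
\]
Because the $\sigma_i$ are nonnegative and arranged in nonincreasing order, the LP optimum is obtained by starting from the otherwise-optimal choice $x_i \equiv 1$ and subtracting $2$ from the coordinate with the smallest weight; this yields $x_i = 1$ for $i < n$ and $x_n = -1$.

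The final step is to check that the LP bound is attained inside $SO^-(n)$: the matrix $D_n$ lies in $SO^-(n)$ and achieves $\sum_i (D_n)_{ii}\sigma_i = \sum_{i<n}\sigma_i - \sigma_n$, matching the LP value. Therefore $B^\star = D_n$ is optimal, and reversing the change of variables gives $C^\star = U D_n V^t$ with
\[
\|C^\star - A\|_F^2 = n + \sum_i \sigma_i^2 - 2\sum_{i<n}\sigma_i + 2\sigma_n = \sum_i (\sigma_i - 1)^2 + 4\sigma_{\min}.
\]
The case $\det A < 0$ is handled by an identical argument: now $\det U \det V = -1$, so $C \in SO(n)$ corresponds to $B \in SO^-(n)$, and the very same LP arises. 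I expect the only nontrivial point to be the trace bound $\operatorname{tr}(B) \le n - 2$ on $SO^-(n)$; fortunately, the eigenvalue-pairing argument is already established in the proof of Lemma~\ref{l:SOdist} and can be reused verbatim.
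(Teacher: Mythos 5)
Your proposal is correct and follows essentially the same route as the paper: the change of variables $B = U^t C V$ (with the determinant bookkeeping showing $B$ ranges over $SO^-(n)$), the expansion $\|B-\Sigma\|_F^2 = n + \sum_i \sigma_i^2 - 2\sum_i B_{ii}\sigma_i$, and the maximization of $\sum_i B_{ii}\sigma_i$ using $|B_{ii}|\le 1$ together with the trace bound $\operatorname{tr}(B)\le n-2$ from Lemma~\ref{l:SOdist}. Your write-up is in fact slightly more explicit than the paper's at the two delicate points (why the substitution is a bijection, and why the relaxed linear program is optimized by flipping only the smallest singular value and is attained by $D_n$), but the argument is the same.
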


Similar as the problem in \eqref{e:LinProb}, we consider the volume constrained problem 
\begin{align}\label{e:LinProbVolume}
\min_{B \in  L^\infty(\Omega, O_n)}  & \ L^\tau_A(B) \\
\nonumber
\textrm{s.t.} & \ \text{vol}(\{x\colon B(x) \in SO(n)\} ) = V.
\end{align}
Assume $A\in \mathbb{R}^{n \times n}$ has a singular value decomposition, $A = U\Sigma V^t$ where $\Sigma$ is a diagonal matrix with diagonal entries $\sigma_i$ in a descending order and  $D_n$ is a diagonal matrix with diagonal entries $1$ everywhere except in the $n-$th position, where it is $-1$.
We first define
\[ T^+(A) = \left\lbrace \begin{array}{cc}
UV^t,  & \text{if} \ \ \det A >0, \\
UD_nV^t,  & \text{if} \ \ \det A <0
\end{array} \right.
 \]
and
\[ T^-(A) = \left\lbrace \begin{array}{cc}
UD_nV^t,  & \text{if} \ \ \det A >0, \\
UV^t,  & \text{if} \ \ \det A <0.
\end{array} \right.
 \]
Then, we define 
\[\Delta E(x) = \left\langle T^+\left(A(\tau,x)\right)-T^-\left(A(\tau,x)\right), A(\tau,x)  \right\rangle_F,
\]
\[
\Omega^\lambda_+ = \{ x \colon \Delta E(x) \geq \lambda \},
\] 
and $\Omega^\lambda_- =\Omega \setminus \Omega^\lambda_+ $. Similar as that in \cite{ruuth2003simple}, we treat the volume of $\Omega_+^\lambda$ as a function of $\lambda$, {\it i.e.}, 
$ f(\lambda) = \textrm{vol}(\Omega_+^{\lambda})$, 
and identify the value $\lambda$ such that $f(\lambda) = V$. For the matrix case, we need to redefine the matrix valued field on $\Omega^\lambda_+$ and $\Omega^\lambda_-$.  This can be accomplished pointwise by assigning 
$$
A(\tau,x) \mapsto \begin{cases}
T^+\left(A(\tau,x)\right) & \textrm{if} \ \Delta E(\x) \geq \lambda \\
T^-\left(A(\tau,x)\right) & \textrm{if} \ \Delta E(x)< \lambda
\end{cases}.
$$
The following lemma  shows that this choice is optimal.
\begin{lem}\label{l:optimal}
Assume $\lambda_0$ satisfies  $f(\lambda_0) = V$. Then, 
\[
B^{\star} = \begin{cases}
T^+(A(\tau,x))  & \ \textrm{if}   \  \Delta E(x) \geq \lambda_0 \\
T^-(A(\tau,x))  & \ \textrm{if}  \  \Delta E(x) < \lambda_0
\end{cases}.
\]
attains the minimum in \eqref{e:LinProbVolume}. 
\end{lem}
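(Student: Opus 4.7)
My plan is to reduce the constrained linear optimization in \eqref{e:LinProbVolume} to the standard problem of maximizing an integral over measurable subsets of prescribed measure, whose solution is a superlevel set of the integrand.

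The first step is to rewrite the problem pointwise. Since $L^\tau_A(B) = -\frac{2}{\tau}\int_\Omega \langle A(\tau,x), B(x)\rangle_F\,dx$ (writing $A(\tau,x) = e^{\Delta\tau} A(x)$ as in the lemma statement), minimizing $L^\tau_A$ is equivalent to maximizing $\int_\Omega \langle A(\tau,x), B(x)\rangle_F\,dx$ over admissible $B$. At each $x$, $B(x)$ lies either in $SO(n)$ or in $SO^-(n)$; writing $\Omega_+ = \{x : B(x) \in SO(n)\}$, this set must satisfy $\textrm{vol}(\Omega_+) = V$. Lemmas \ref{l:proj} and \ref{l:projso-} show that the pointwise maximizer of $\langle A(\tau,x),\,\cdot\,\rangle_F$ over $SO(n)$ is $T^+(A(\tau,x))$, and over $SO^-(n)$ is $T^-(A(\tau,x))$. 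Thus the only remaining freedom is the choice of the set $\Omega_+$.

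Substituting these pointwise optima and adding and subtracting $\int_\Omega \langle A(\tau,x), T^-(A(\tau,x))\rangle_F\,dx$ reduces the objective to $\textrm{const} + \int_{\Omega_+} \Delta E(x)\,dx$, where the constant is independent of the choice of $\Omega_+$. The problem then becomes: choose a measurable $\Omega_+ \subset \Omega$ with $\textrm{vol}(\Omega_+) = V$ maximizing $\int_{\Omega_+}\Delta E\,dx$. The classical ``bang-bang'' argument now applies: picking $\lambda_0$ so that $\textrm{vol}(\{\Delta E \geq \lambda_0\}) = V$, setting $S^\star = \{\Delta E \geq \lambda_0\}$, and taking any competitor $S'$ of the same measure,
\[ \int_{S^\star}\Delta E\,dx - \int_{S'}\Delta E\,dx = \int_{S^\star\setminus S'} \Delta E\,dx - \int_{S'\setminus S^\star}\Delta E\,dx \geq \lambda_0\,\textrm{vol}(S^\star\setminus S') - \lambda_0\,\textrm{vol}(S'\setminus S^\star) = 0, \]
since the two symmetric differences have equal measure by the volume constraint, and $\Delta E \geq \lambda_0$ on $S^\star$ while $\Delta E < \lambda_0$ on its complement. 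This is exactly the selection rule defining $B^\star$ in the lemma.

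The main obstacle is a technicality at a degenerate $\lambda_0$: if the level set $\{\Delta E = \lambda_0\}$ has positive measure, the distribution function $\lambda \mapsto \textrm{vol}(\{\Delta E \geq \lambda\})$ may jump past the value $V$, and one must pick a measurable subset of the level set of the appropriate measure rather than the full superlevel set. Since $\Delta E$ is constant on the level set, any such choice achieves the same objective, so the lemma's selection rule corresponds to one valid optimizer. Existence of a suitable $\lambda_0$ (or the required subset) follows from monotonicity and left-continuity of the distribution function together with the hypothesis $f(\lambda_0)=V$ in the statement.
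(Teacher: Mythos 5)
Your proof is correct and follows essentially the same route as the paper: reduce to the pointwise projections $T^{\pm}$ via Lemmas \ref{l:proj} and \ref{l:projso-}, then compare with an arbitrary admissible competitor on the sets where the determinant assignment differs, using $\Delta E \geq \lambda_0$ versus $\Delta E < \lambda_0$ together with the equality of volumes. Your ``bang-bang'' phrasing (subtracting the constant $\int_\Omega \langle A(\tau,x),T^-\rangle_F\,dx$ and optimizing $\int_{\Omega_+}\Delta E\,dx$ over sets of volume $V$) is just a repackaging of the paper's exchange argument on $\Omega_1$ and $\Omega_2$, and your remark on the degenerate level set is already covered by the hypothesis $f(\lambda_0)=V$.
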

\begin{proof} Write 
$P_n = \{A\in L^\infty(\Omega,O_n) \colon \text{vol}(\{x\colon A(x) \in SO(n)\} ) = V \}$.
Consider any $B \in P_n$, from Lemma~\ref{l:proj} and Lemma~\ref{l:projso-}, we have $L^\tau_A(B) \geq L^\tau_A(\tilde{B})$ where 
\[
\tilde{B}(x)= \begin{cases}
T^+(A(\tau,x))   & \ \textrm{if}  \ \  \det B(x) =1 \\
T^-(A(\tau,x))   & \ \textrm{if}  \ \  \det B(x) =-1
\end{cases}. 
\]
Now, we only need to prove that $L^\tau_A(\tilde{B}) \geq L^\tau_A(B^{\star})$. Denote 
\begin{align*}
\Omega_1 &=\{x\colon \det(\tilde{B}(x))=1 \  \text{and} \ \det(B^{\star}(x))=-1\} \\
\Omega_2 &= \{x\colon \det(\tilde{B}(x))=-1 \  \text{and} \  \det(B^{\star}(x))=1\}. 
\end{align*}
That is, 
\begin{align*}
&B^{\star}(x) = T^-(A(\tau,x)) \quad \text{and} \quad \tilde{B}(x) = T^+(A(\tau,x)) \quad \forall x \in \Omega_1 \\
&B^{\star}(x) = T^+(A(\tau,x)) \quad \text{and} \quad \tilde{B}(x) = T^-(A(\tau,x)), \quad \forall x \in \Omega_2.
\end{align*}  
Also, we have $\Omega_2 \subset \Omega_+^{\lambda_0}$, $\Omega_1 \subset \Omega_-^{\lambda_0}$, and 
$ \textrm{vol}(\Omega_1) = \textrm{vol}(\Omega_2)$.  
Hence, from the definition of $\Delta E(x)$, we have
\begin{align*}
L^\tau_A(\tilde{B}) - L^\tau_A(B^{\star}) & = \frac{2}{\tau} \int_{\Omega} \langle  e^{\Delta \tau} A, B^{\star} - \tilde{B} \rangle_F  \ dx \\
&= \frac{2}{\tau} \int_{\Omega_1} \langle  e^{\Delta \tau} A, B^{\star} - \tilde{B} \rangle_F  \ dx+ \frac{2}{\tau} \int_{\Omega_2} \langle  e^{\Delta \tau} A, B^{\star} - \tilde{B} \rangle_F  \ dx \\
& = -\frac{2}{\tau} \int_{\Omega_1} \langle  e^{\Delta \tau} A,  \tilde{B}-B^{\star} \rangle_F  \ dx+ \frac{2}{\tau} \int_{\Omega_2} \langle  e^{\Delta \tau} A, B^{\star} - \tilde{B} \rangle_F  \ dx \\
& \geq - \lambda_0  \textrm{vol}(\Omega_1) + \lambda_0  \textrm{vol}(\Omega_2) \\
& = 0 .
\end{align*}  
\end{proof}

We refer to the modification to Algorithm \ref{a:MBO} that replaces  step 2 with the reassignment in Lemma \ref{l:optimal} as the \emph{volume-preserving MBO method}.  
The following proposition to show that this algorithm is unconditionally stable.
\begin{prop} \label{p:VolPreserveStab}
For any $\tau>0$, the functional $E^\tau$, defined in \eqref{eq:Lyapunov}, is non-increasing on the volume-preserving iterates $\{ A_s \}_{s=1}^\infty $, {\it i.e.}, $E^\tau(A_{s+1}) \leq E^\tau(A_s)$. 
\end{prop}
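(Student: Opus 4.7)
The plan is to mimic the proof of Proposition \ref{prop:Lyapunov}, using concavity of $E^\tau$ to bound the change in $E^\tau$ per iteration by the change in its linearization, and then using Lemma \ref{l:optimal} to control the change in the linearization. Concretely, I would start by noting that the volume-preserving modification produces iterates $\{A_s\}$ lying in the feasible set
\[ P_n = \{A\in L^\infty(\Omega,O_n) \colon \text{vol}(\{x\colon A(x) \in SO(n)\}) = V \}, \]
since the reassignment in Lemma \ref{l:optimal} is, by construction, built from the partition of $\Omega$ into a set of prescribed volume $V$ on which one chooses $T^+$ and its complement on which one chooses $T^-$.

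Next I would invoke the concavity of $E^\tau$ on $L^\infty(\Omega, M_n)$ established in Lemma \ref{l:Jprops}(ii) together with Fr\'echet differentiability from Lemma \ref{l:Jprops}(iv) to obtain the standard supporting-hyperplane inequality
\[ E^\tau(A_{s+1}) - E^\tau(A_s) \leq L^\tau_{A_s}(A_{s+1} - A_s) = L^\tau_{A_s}(A_{s+1}) - L^\tau_{A_s}(A_s), \]
where the equality uses linearity of $L^\tau_{A_s}$.

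The final and most substantive step is to show $L^\tau_{A_s}(A_{s+1}) \leq L^\tau_{A_s}(A_s)$. This is exactly where the volume constraint replaces the earlier pointwise projection argument: Lemma \ref{l:optimal} says that $A_{s+1}$ minimizes $L^\tau_{A_s}(B)$ over $B \in P_n$. Since $A_s \in P_n$ by the first observation, it is a feasible competitor, so $L^\tau_{A_s}(A_{s+1}) \leq L^\tau_{A_s}(A_s)$, and combining with the concavity inequality yields $E^\tau(A_{s+1}) \leq E^\tau(A_s)$.

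The main obstacle I anticipate is making sure the optimality statement from Lemma \ref{l:optimal} is applied with the correct role of $A$: in that lemma, the $A$ appearing is $A(\tau,x)$, i.e., the diffused state $e^{\Delta\tau}A_s$, and the linear functional being minimized is $L^\tau_{A_s}$ with kernel $e^{\Delta\tau}A_s$. I would spell out this identification carefully (via the formula for $L^\tau_A$ in Lemma \ref{l:Jprops}(iv)) so that the ``$\Delta E$'' function used to determine the threshold $\lambda_0$ in Lemma \ref{l:optimal} is unambiguously defined from $e^{\Delta\tau}A_s$. Beyond this bookkeeping, the argument is entirely parallel to Proposition \ref{prop:Lyapunov} and no new analytic ingredient should be required.
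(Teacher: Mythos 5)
Your proposal is correct and follows essentially the same route as the paper, which proves this proposition by repeating the argument of Proposition \ref{prop:Lyapunov} with the concavity of $E^\tau$ (Lemma \ref{l:Jprops}) and with Lemma \ref{l:optimal} supplying the optimality of the volume-constrained reassignment over the feasible set in place of the unconstrained pointwise projection. Your extra care in identifying the $A(\tau,x)$ of Lemma \ref{l:optimal} with the diffused iterate $e^{\Delta\tau}A_s$ is exactly the bookkeeping the paper leaves implicit.
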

Proposition \ref{p:VolPreserveStab} can be proven similarly  to the proof of Proposition \ref{prop:Lyapunov} using the concavity of $E^\tau$ (see Lemma \ref{l:Jprops}) and 
Lemma \ref{l:optimal}.

To identify the level set with appropriate volume, {\it i.e.}, $f(\lambda) = V$, one could use an  iterative method, {\it e.g.} bisection method, Newton's method, fixed point iteration, or Muller's method. 
However, those methods all either require many iterations or are sensitive to the initial guess. In \cite{xu2016efficient}, Xu {\it et. al.} proposed a new algorithm for the volume preserving scalar $\{ \pm 1\}$ problem based on the Quicksort algorithm (see also \cite{jacobsauction,elsey2016threshold}). For a flat torus, the computational domain is discretized by a uniform mesh so that the volume of $\Omega_+^{\lambda}$ can be approximately expressed as $N_+dx^2$ where $N_+$ is the number of grid points where $ \Delta E \geq \lambda$.  Note that when the initial volume $V$ is given, $N_+$ is fixed. If we sort the value of $\Delta E(x)$ into descending order, we only need to choose the first $N_+$ values and assign the corresponding grid points to the set $\Omega_+$.  Since $A(\tau,x)$ is a matrix-valued function, when $\lambda \neq 0$, there are points $x \in \Omega$ where we need to reassign matrices $A(\tau,x)$ with negative determinate to $SO(n)$ or visa versa. This can be done by $T^+$ or $T^-$.

To illustrate the algorithm, we conduct a numerical experiment on the flat torus. The initial condition has a closed line defect and all parameters are chosen the same as in Figure~\ref{fig:flat_torus_dynamic_shrinking}. We set $\tau = 8dx = 0.0078125$. Figure~\ref{fig:flat_torus_dynamic_volume_preserving} displays snapshots of the time dynamics of the closed line defect on the flat torus preserving volume. The closed line defect evolves towards a circle.

\begin{figure}[ht]
\centering
\includegraphics[scale=0.18,clip,trim= 7cm 1cm 7cm 1cm]{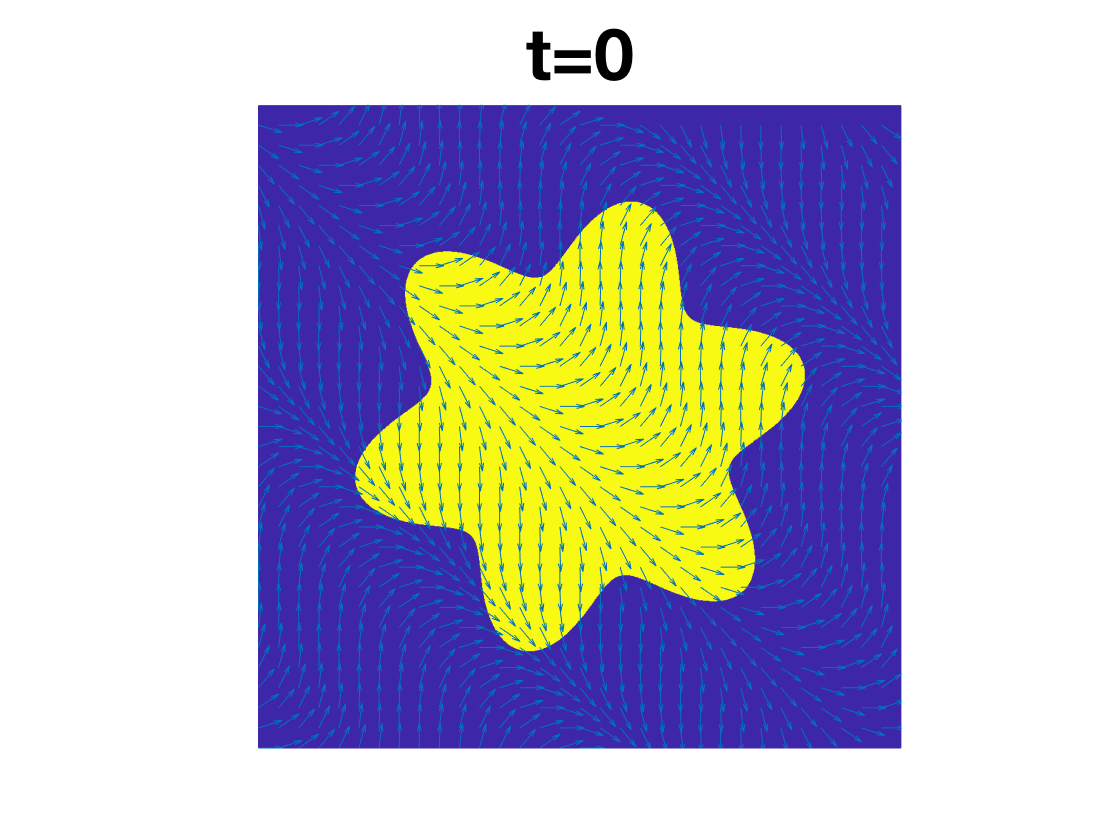}
\includegraphics[scale=0.18,clip,trim= 7cm 1cm 7cm 1cm]{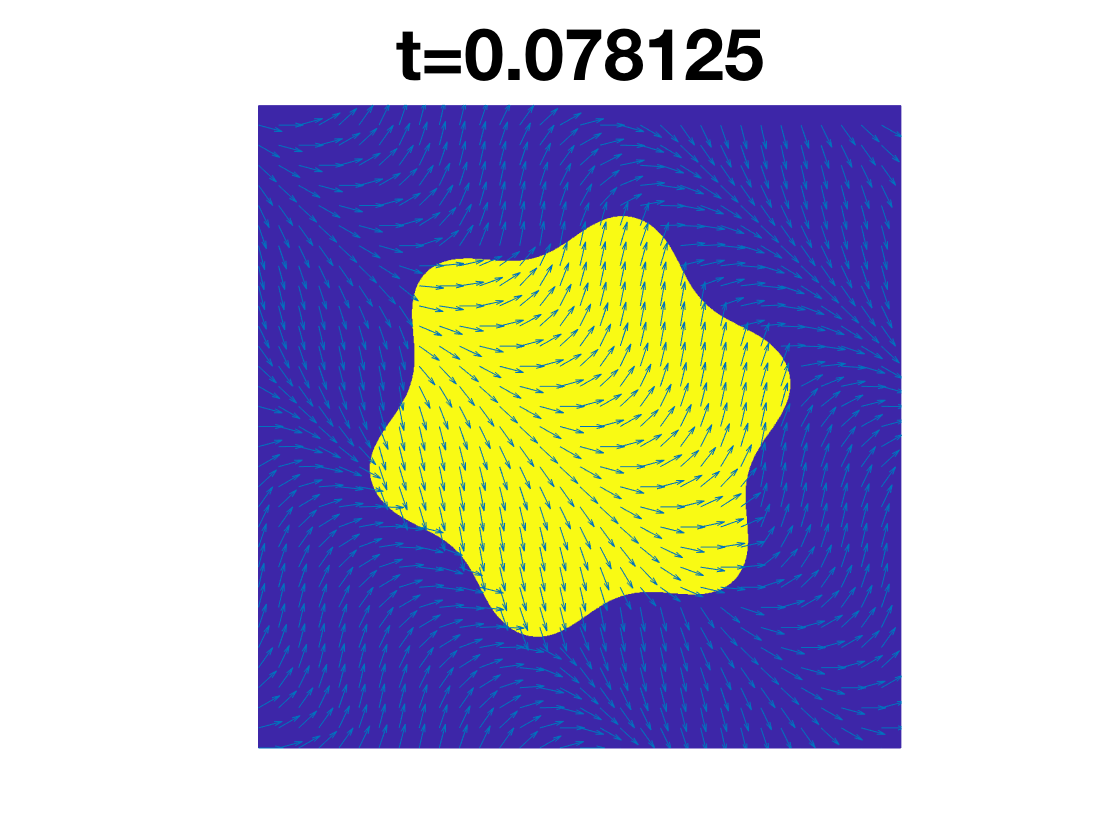}
\includegraphics[scale=0.18,clip,trim= 7cm 1cm 7cm 1cm]{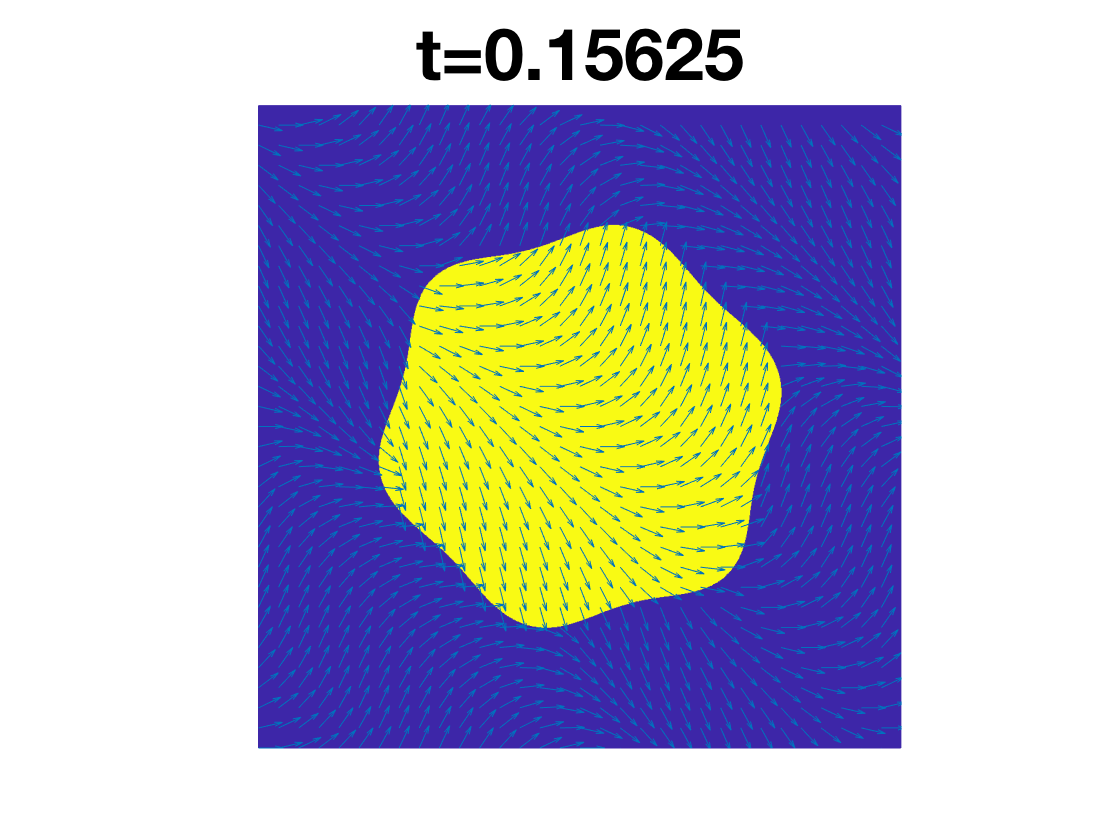}
\includegraphics[scale=0.18,clip,trim= 7cm 1cm 7cm 1cm]{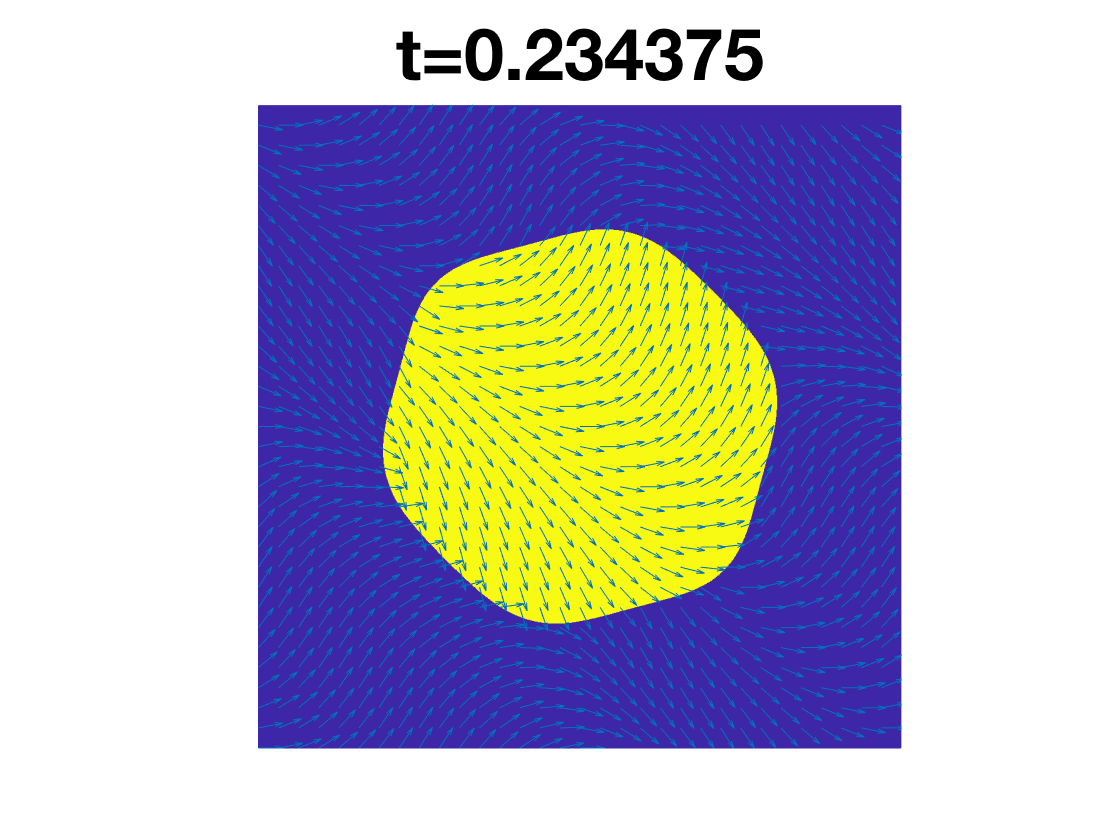}
\includegraphics[scale=0.18,clip,trim= 7cm 1cm 7cm 1cm]{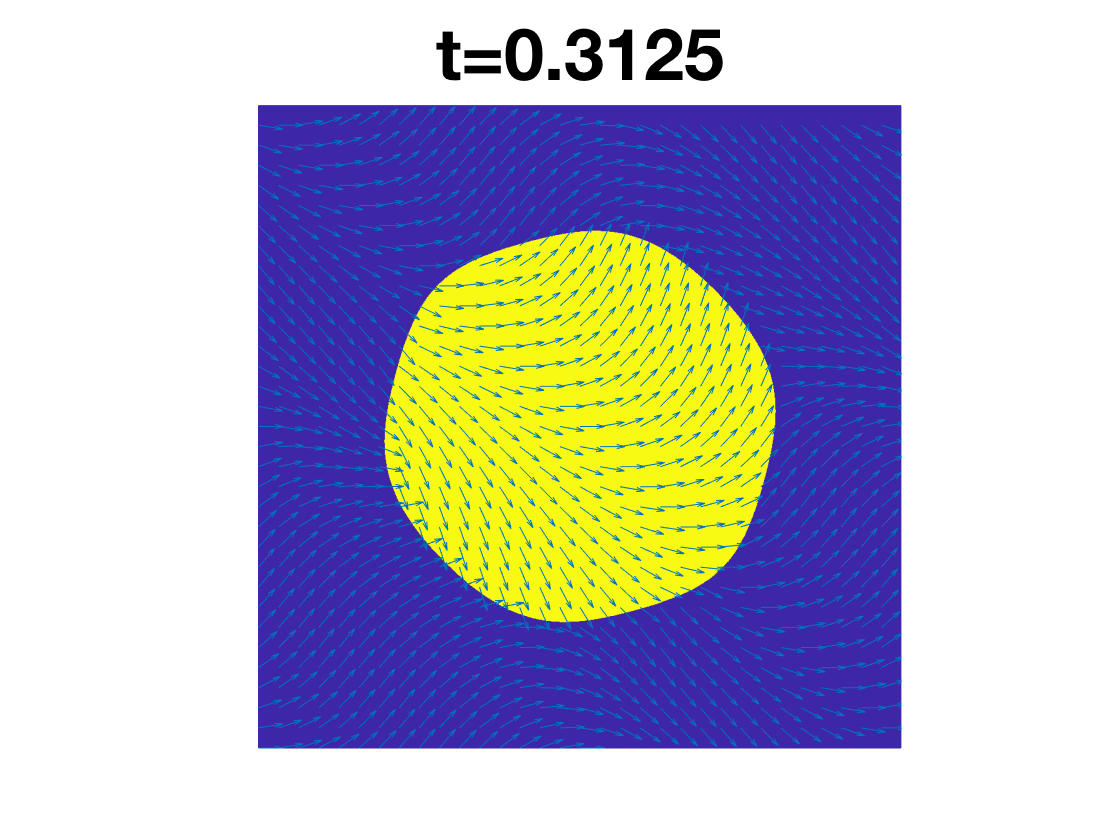}
\includegraphics[scale=0.18,clip,trim= 7cm 1cm 7cm 1cm]{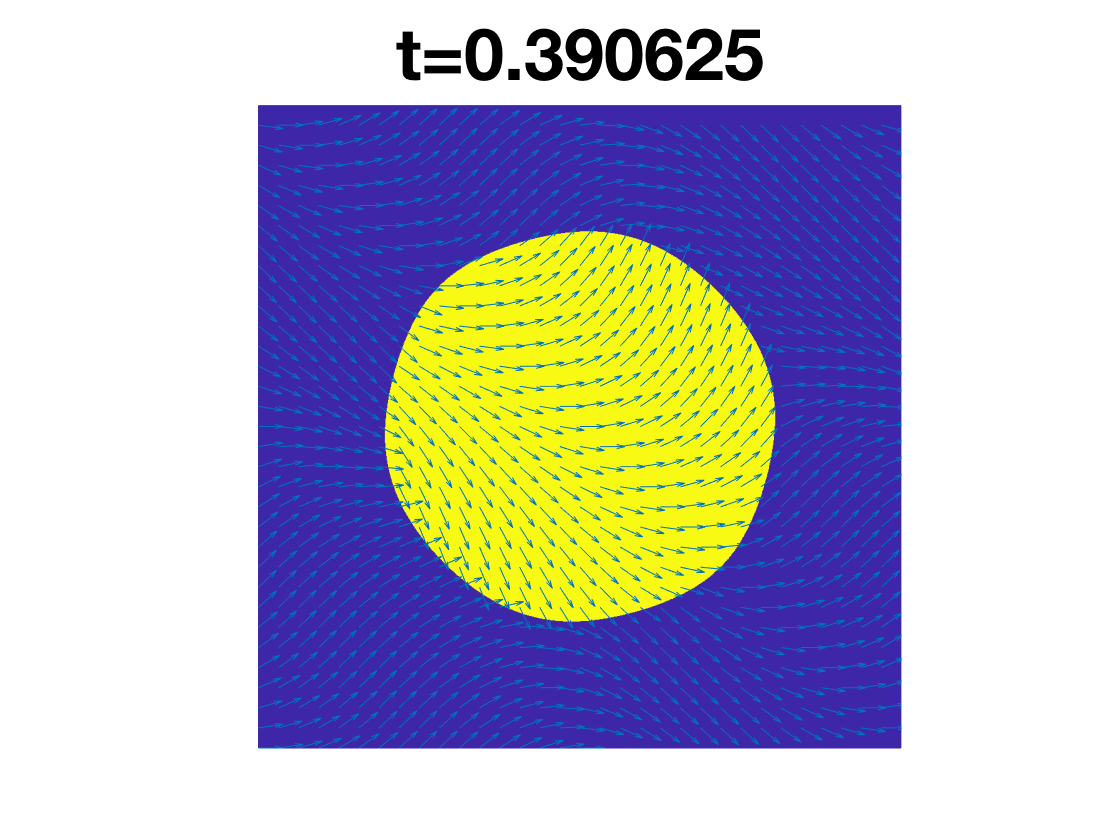}
\includegraphics[scale=0.18,clip,trim= 7cm 1cm 7cm 1cm]{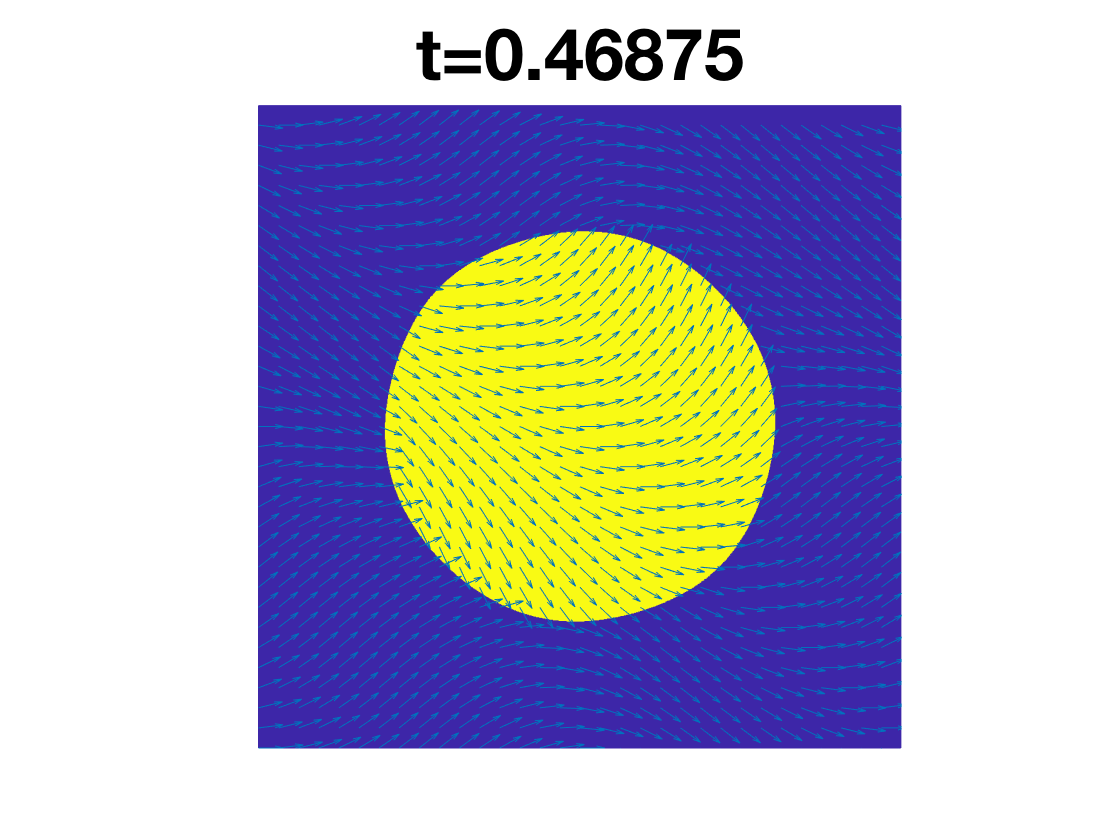}
\includegraphics[scale=0.18,clip,trim= 7cm 1cm 7cm 1cm]{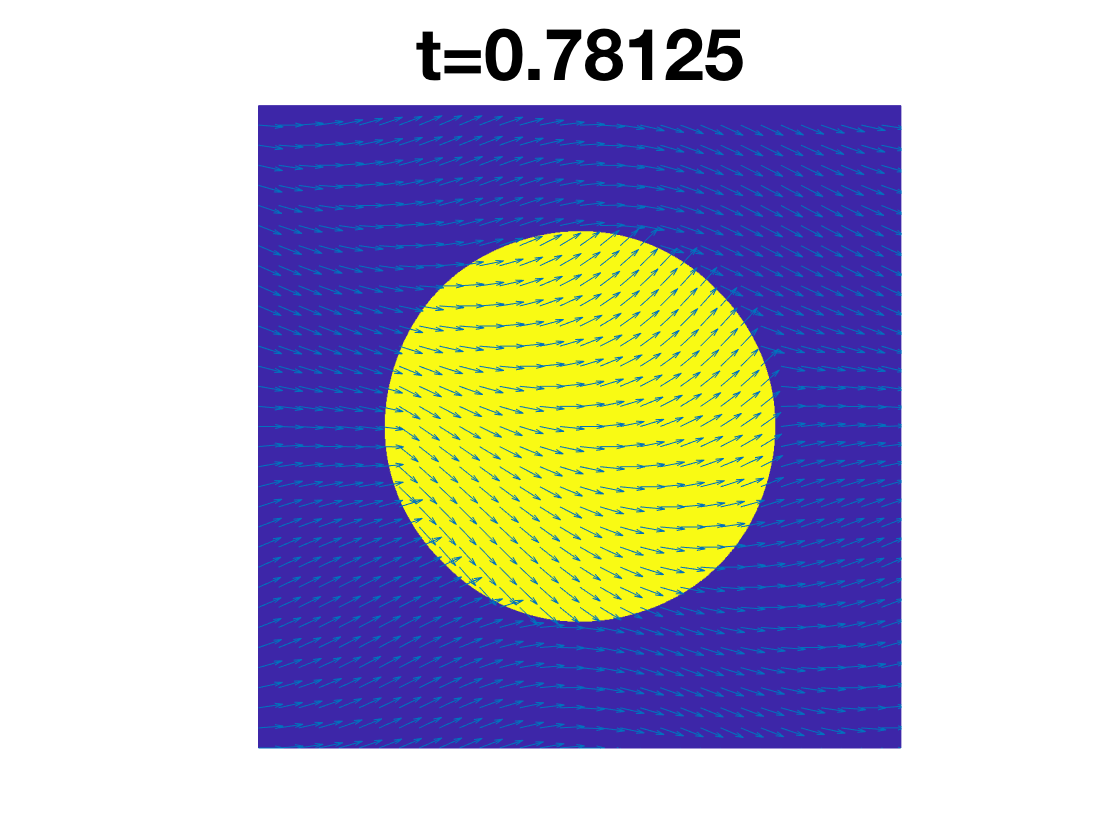}
\includegraphics[scale=0.18,clip,trim= 7cm 1cm 7cm 1cm]{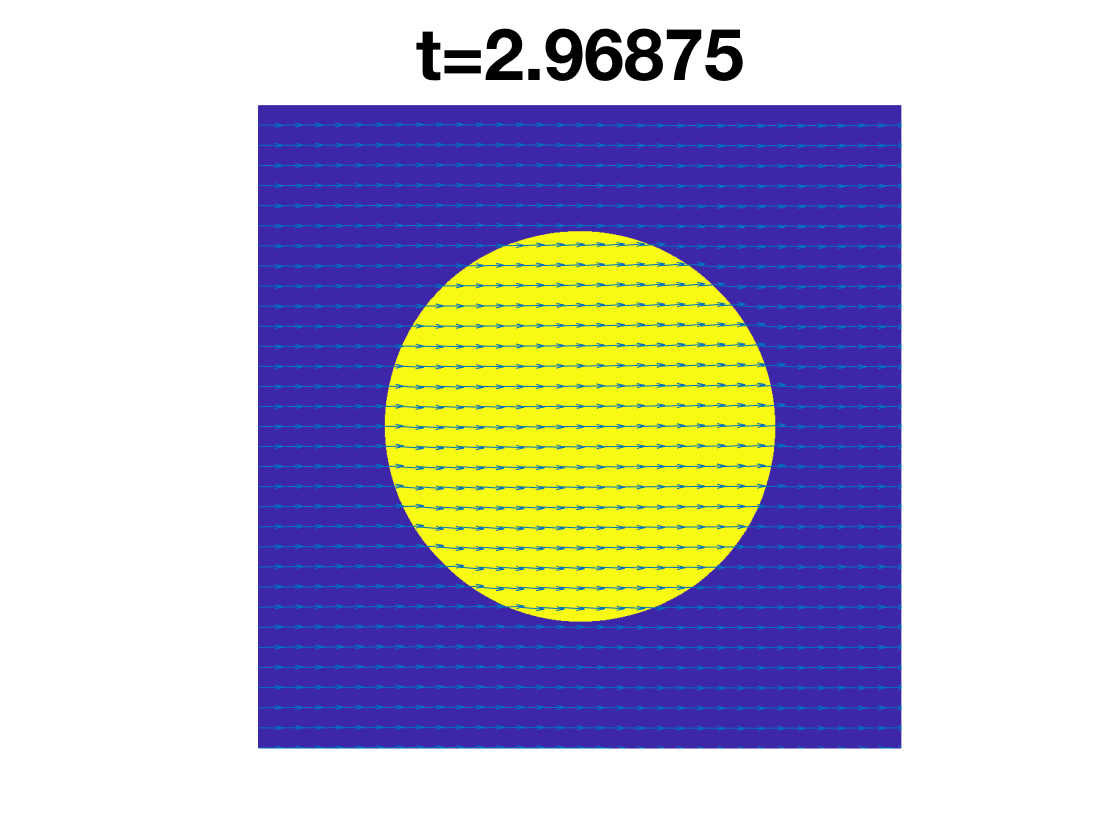}
\caption{Snapshots of the time dynamics for a closed line defect on the flat torus with a volume constraint. See \S\ref{s:VolConst}.}\label{fig:flat_torus_dynamic_volume_preserving}
\end{figure}

\medskip

Finally, we generalize the algorithm in \cite{xu2016efficient} to surfaces. When the surface is not uniformly discretized, there is no $N_+$ in the uniform set as before. We store the integral surface volume $w$ at each point on the surface. After $\Delta E(x)$ is computed, we still use the Quicksort algorithm to sort $\Delta E(x)$ into descending order and store the sort index in a set $\mathcal{S}$ which specifies how the elements of $\Delta E(x)$ were rearranged to obtain the sorted vector. Then we set $V_0 = 0$, and add the contribution of volume from each $x$ to $V_0$ according to the descending order one by one until $V_0\geq V$. The procedure is summarized in Algorithm~\ref{a:quicksort}.

\begin{algorithm}[t!]
\DontPrintSemicolon
\KwIn{Let $A(\tau,x)$ be computed by Algorithm~\ref{a:nufft_surface}, 
the integral volume $w$ at each surface point be given, 
and desired surface volume $V$ be given. }
\KwOut{A parameter $\lambda$ so that $\textrm{vol}(\Omega_+^{\lambda}) = V$.}
 {\bf 1.} Use the Quicksort algorithm to sort $\Delta E(x)$ into a descending order and store the sorted index in $\mathcal{S}$.\\
 {\bf 2.} Set $V_0=0$ and $i=1$.\;
\While{$V_0<V$}{
$V_0 = V_0+w(\mathcal{S}(i))$\\
$i = i+1$
 }
 {\bf 3.} $\lambda =\dfrac{\Delta E(x)(\mathcal{S}(i-1)) +\Delta E(x)(\mathcal{S}(i))}{2}$. 
\caption{A Quicksort based algorithm for volume preserving on surfaces. } 
\label{a:quicksort}
\end{algorithm}

To illustrate the algorithm on surfaces, we consider the volume preserving problem on a sphere. 
The initial condition has a closed line defect and is chosen to be the same as that in Figure~\ref{fig:sphere_dynamic}. 
Figure~\ref{fig:sphere_volume_preserving} displays snapshots of the time dynamics.
The line defect is seen to evolve towards a circle. 
In this experiment, we set 
the grid size $dx = 0.05$, 
time step size $\tau = 0.01$, 
accuracy $\varepsilon = 10^{-6}$, 
the order for quadrature points $p=3$, 
and the band width $0.7823$  (see Table~\ref{tab:Estimatebw}). 
After using the closest point representation, the number of degrees of freedom is $206,026$. 
The number of Fourier modes is $81 \times 81 \times 81$  (see Table~\ref{tab:Fourier_mode}).

\begin{figure}[ht]
\centering
\includegraphics[scale=0.23,clip,trim= 10cm 1cm 10cm 1cm]{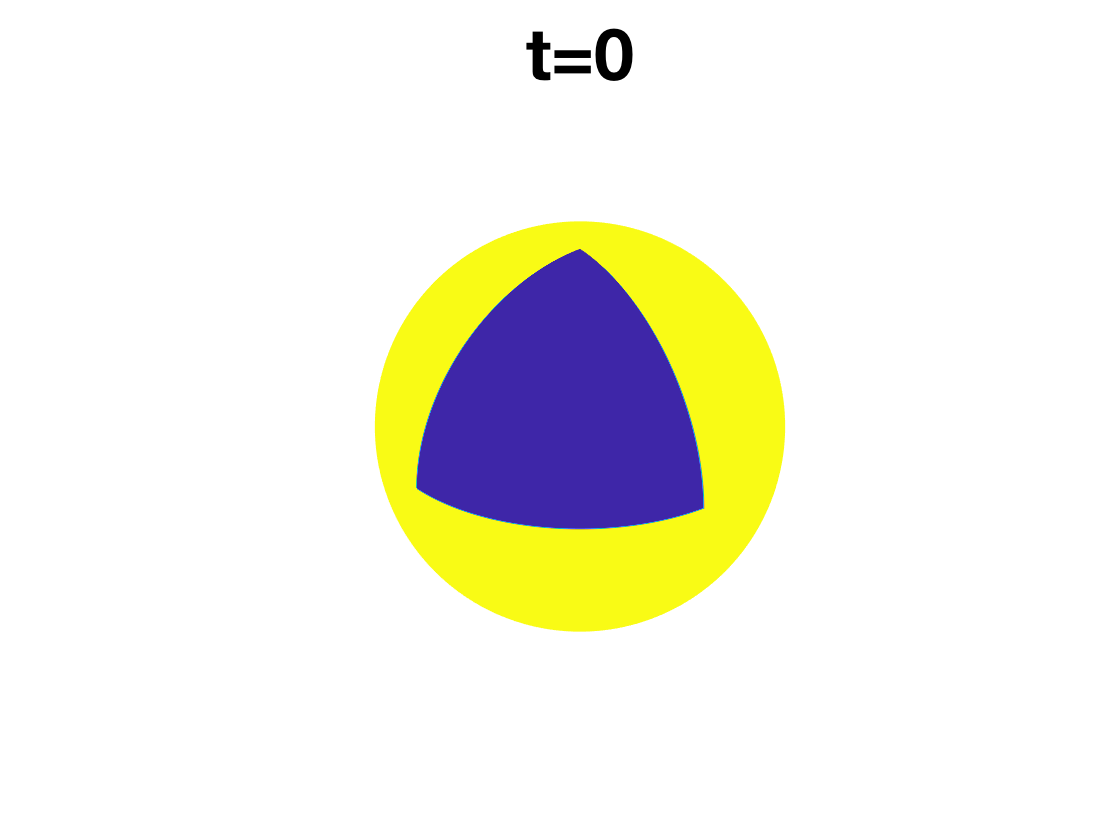}
\includegraphics[scale=0.23,clip,trim= 10cm 1cm 10cm 1cm]{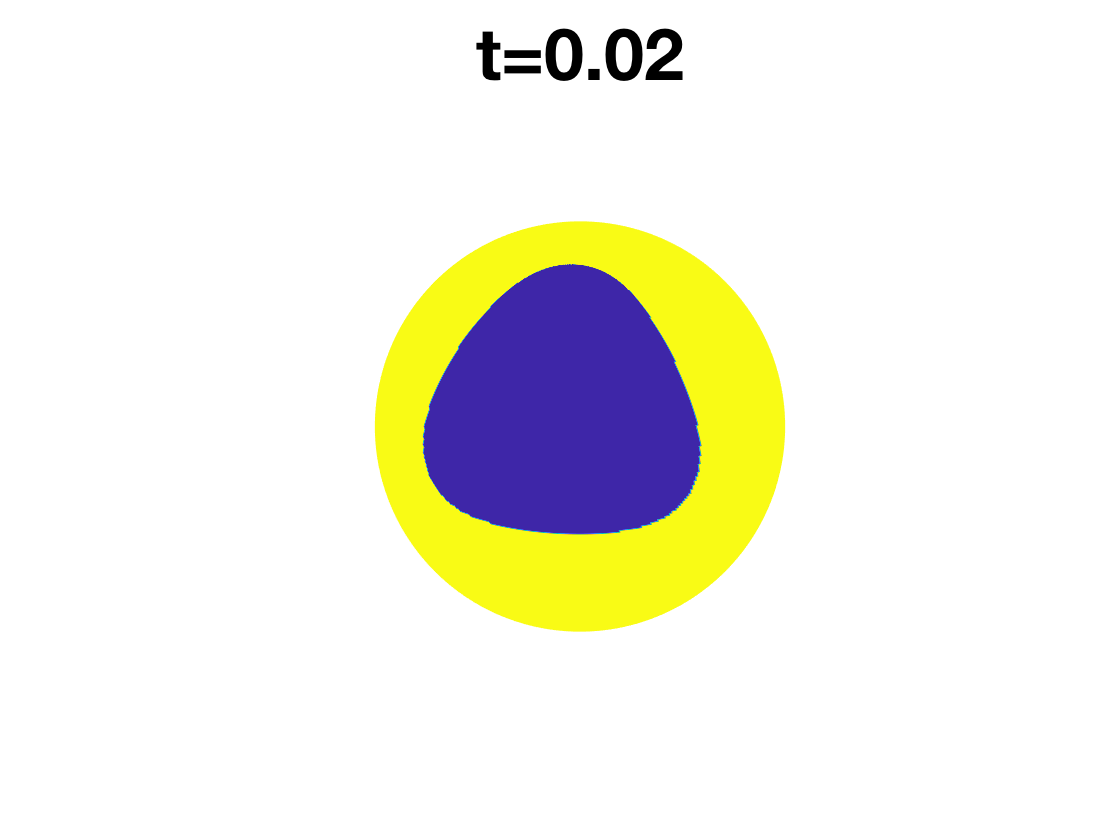}
\includegraphics[scale=0.23,clip,trim= 10cm 1cm 10cm 1cm]{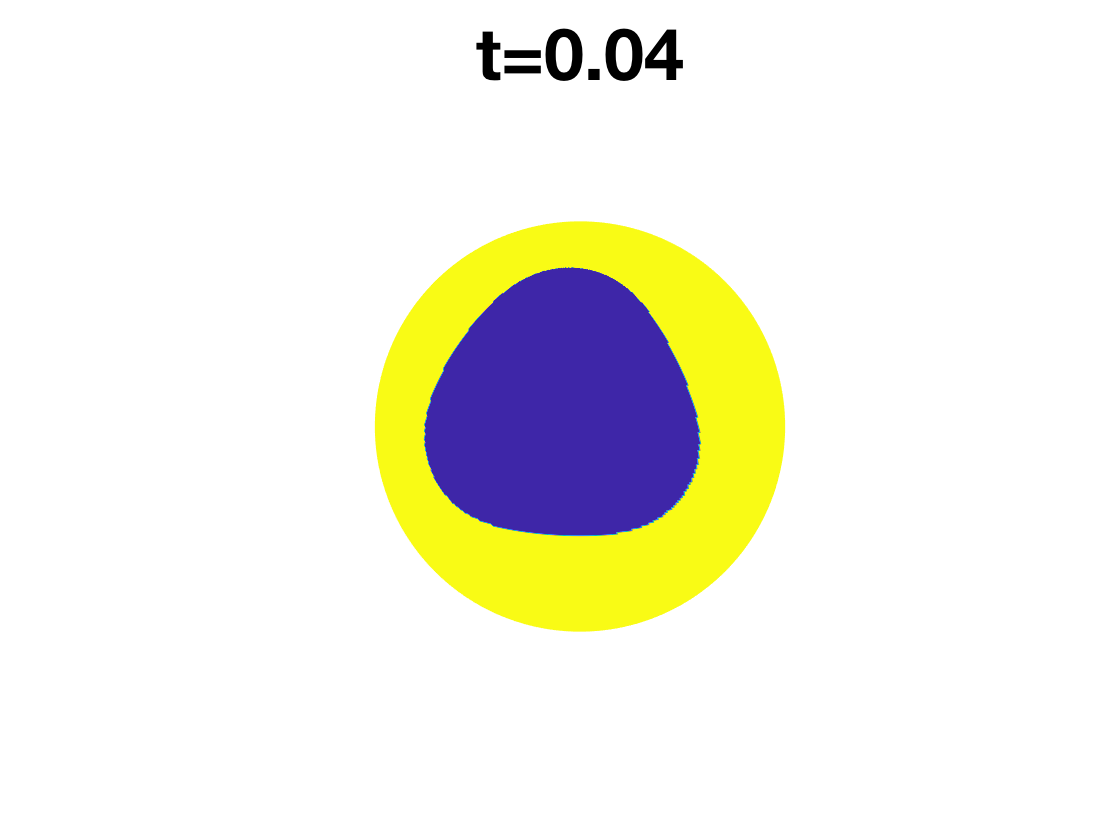}
\includegraphics[scale=0.23,clip,trim= 10cm 1cm 10cm 1cm]{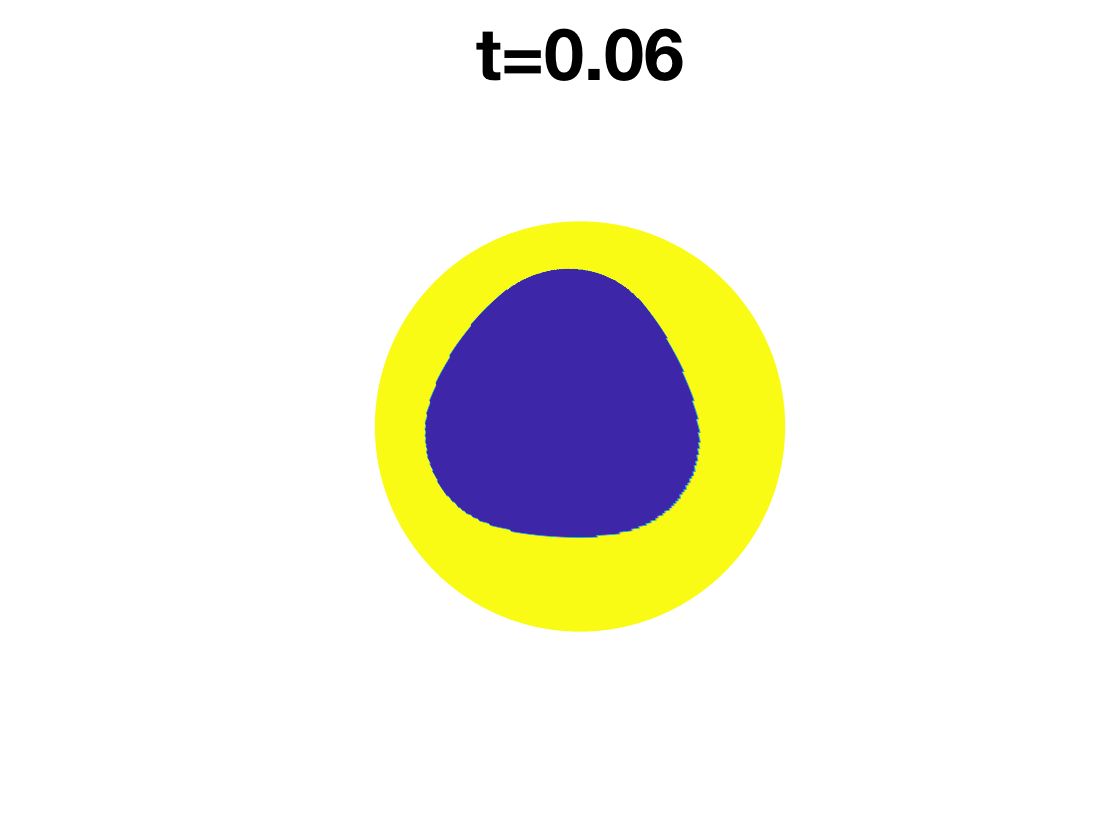}
\includegraphics[scale=0.23,clip,trim= 10cm 1cm 10cm 1cm]{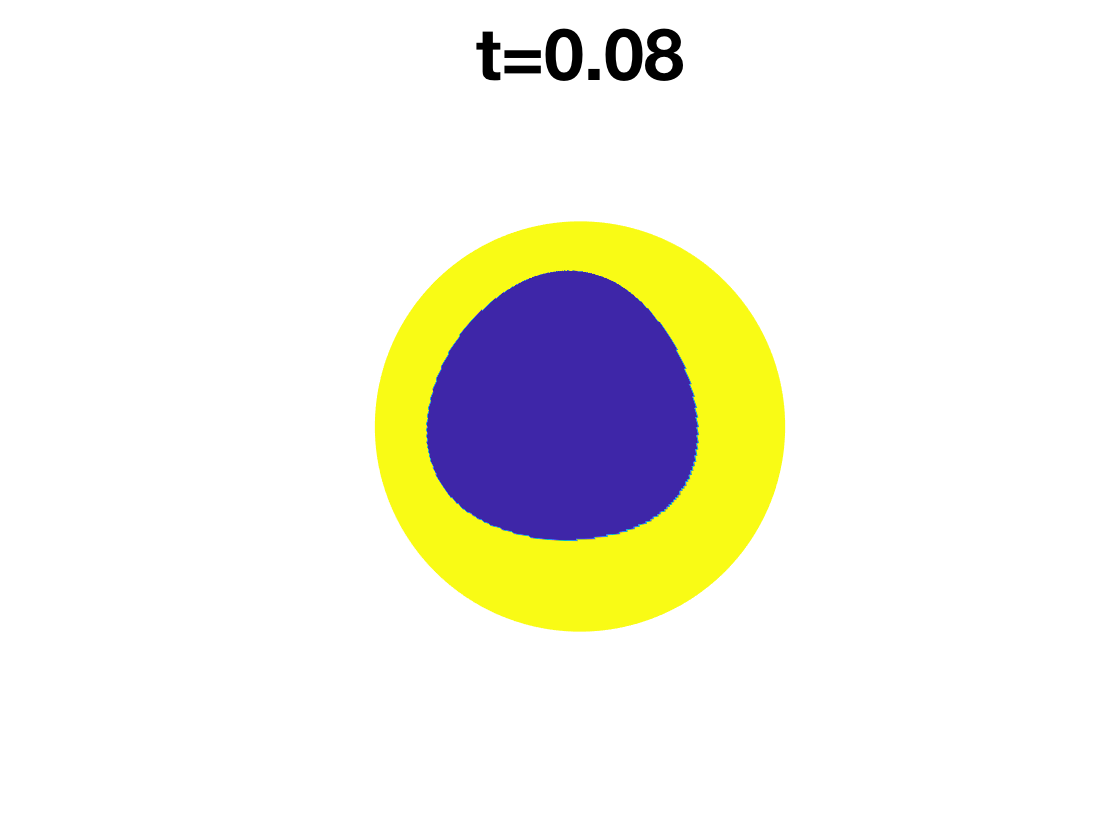}
\includegraphics[scale=0.23,clip,trim= 10cm 1cm 10cm 1cm]{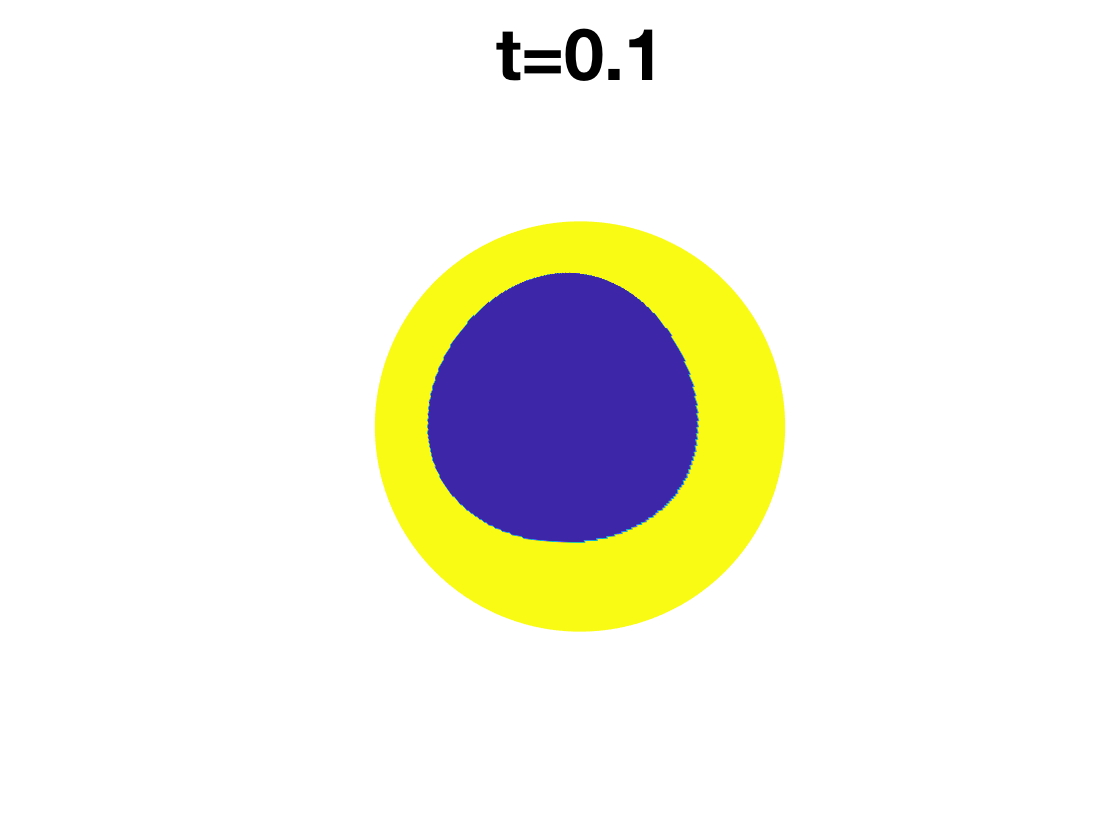}
\includegraphics[scale=0.23,clip,trim= 10cm 1cm 10cm 1cm]{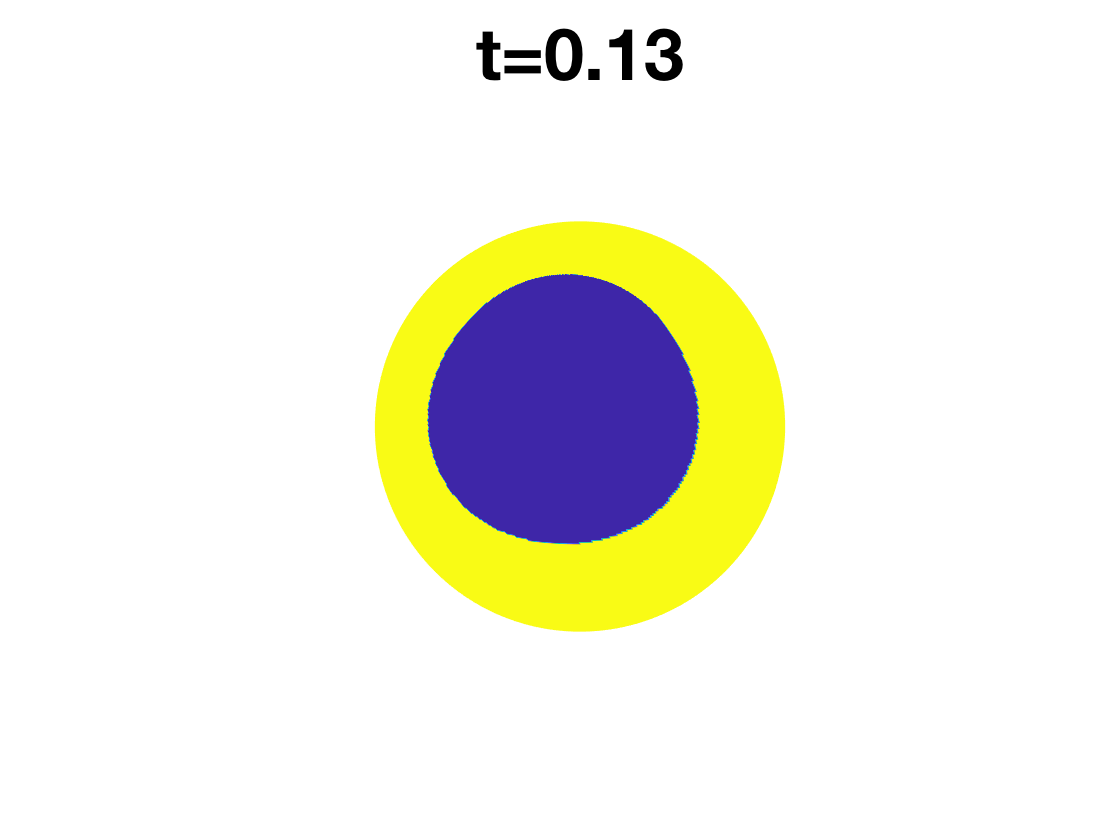}
\includegraphics[scale=0.23,clip,trim= 10cm 1cm 10cm 1cm]{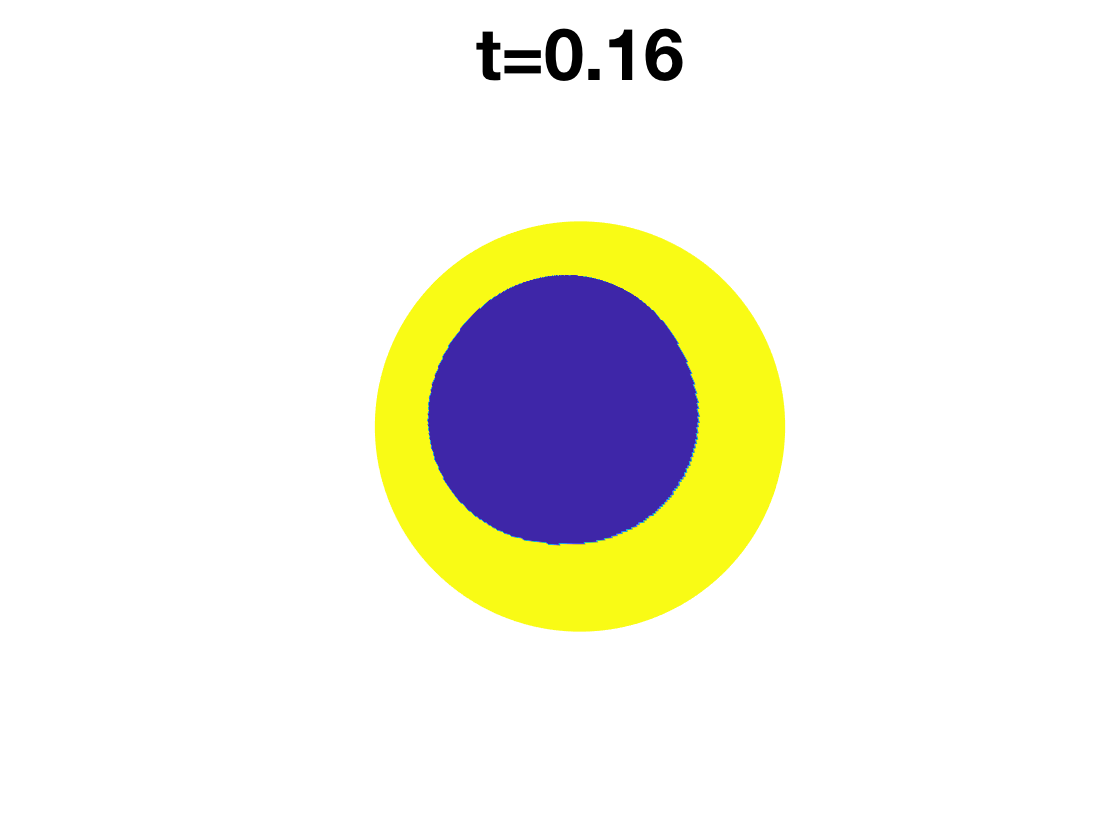}
\includegraphics[scale=0.23,clip,trim= 10cm 1cm 10cm 1cm]{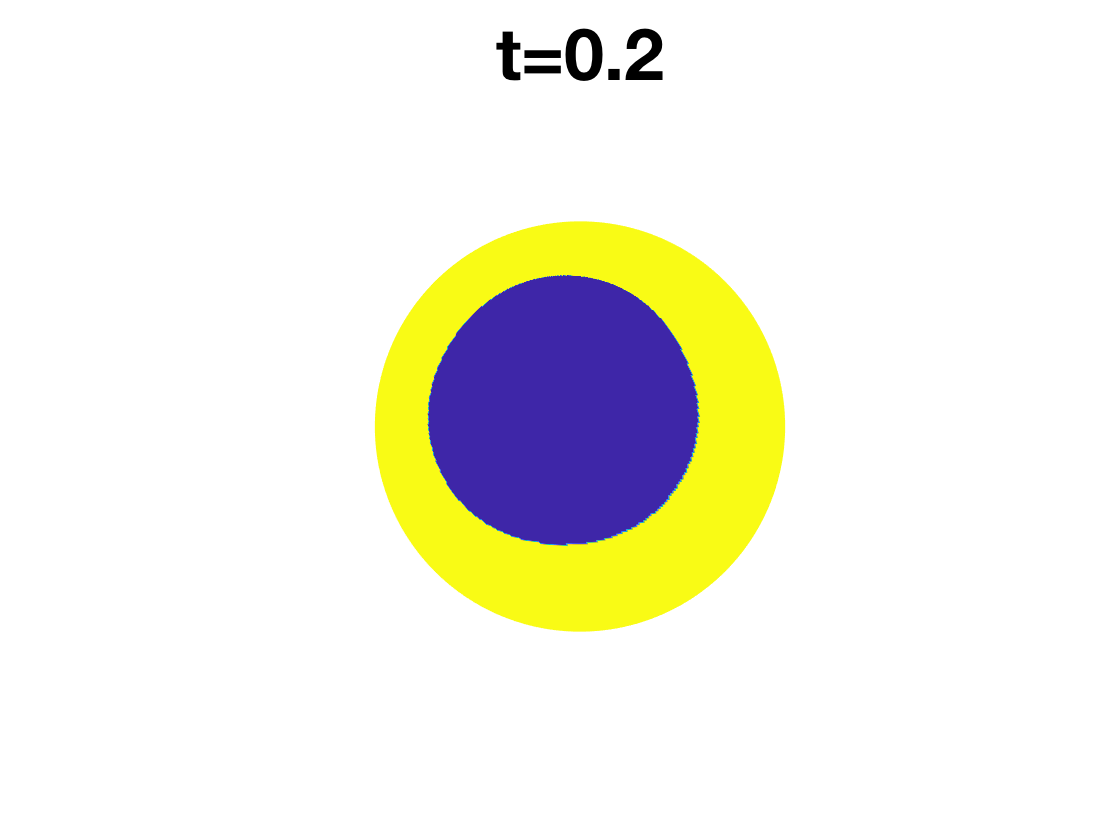}
\caption{Snapshots of the time dynamics of a closed line defect on the sphere surface with a volume constraint. See \S\ref{s:VolConst}.}\label{fig:sphere_volume_preserving}
\end{figure}

\section{Discussion} \label{s:disc}
We introduced a generalization of the MBO diffusion generated motion for orthogonal matrix-valued fields. 
In Section \ref{s:stab}, we proved the stability and convergence of this method by extending the Lyapunov function of Esedoglu and Otto. We implemented this algorithm using the closest point method and non-uniform FFT and used it to perform a variety of numerical  experiments on flat tori and closed surfaces.  There are a variety of open questions that we hope to address in future work. 

In this paper, we only considered a single matrix-valued field that has two ``phases'' given by when the determinant is positive or negative. It would be very interesting to extend this work to the mutli-phase problem as was accomplished for $n=1$ in \cite{esedoglu2015threshold}. 

In this paper, we solved the heat diffusion equation for a matrix-valued function entry by entry at each step by evaluating the convolutions between Green's function and initial condition. When $n=2$ or $n=3$, it is still acceptable. However, for $n$ large, the algorithm will be inefficient at each step, requiring $n^2$ convolutions at each step. 
Since the matrix-valued heat diffusion equation is entry-wise, it is natural to implement the code in parallel so that the solution can be evaluated simultaneously for each entry.

In Section \ref{s:FT}, we considered the case for $n=2$ on a two-dimensional flat torus. We made several observations regarding the index of this field, and it would be good to further analyze this case, possibly using the index of the initial condition to predict the steady state solution. 

It is well-known that boundary conditions in the complex Ginzburg-Landau problem can lead to solutions with point defects, a.k.a., vortices \cite{Bethuel_1994}. Interesting numerical examples can be found in, {\it e.g.}, \cite{Viertel2017}. 
Since we prove in Lemma \ref{l:n=2Equiv} that this energy generalizes the complex Ginzburg-Landau energy, it would be interesting to know what types of singularities are allowed in the higher dimensional fields when boundary conditions are imposed.

\subsection*{Acknowledgements} The authors would like to thank Elena Cherkaev, Matthew Jacobs, Sarang Joshi, Stan Osher, and Dan Spirn for useful discussions during the preparation of this manuscript. D. Wang thanks Xiao-Ping Wang for the continuous encouragement and helpful discussions.  

\clearpage
\printbibliography

\end{document}